\newtheorem{theorem}{Theorem}[section]
\newtheorem{definition}[theorem]{Definition}
\newtheorem{lemma}[theorem]{Lemma}
\newtheorem{proposition}[theorem]{Proposition}
\newtheorem{corollary}[theorem]{Corollary}
\newtheorem{examplecore}[theorem]{Example}}
\newtheorem{remark}[theorem]{Remark}
\newcommand{\Spec}{\ensuremath{\operatorname{Spec}}}
\newcommand{\codim}{\ensuremath{\operatorname{codim}}}
\newenvironment{proof}{\noindent\textsc{Proof:}}{\hspace*{\fill}
$\blacksquare$\par\vspace{.1cm}} 
\newenvironment{example}{\begin{examplecore}}{\hspace*{\fill}
$\square$\par\vspace{.1cm}\end{examplecore}}
\newcommand{\mylabel}[1]{\label{#1}\ifthenelse{\boolean{finalversion}}{
  }{\marginpar{\tiny #1}}}
\title{Valuation theoretic methods in the birational geometry of algebraic varieties}
\author{Stefan G\"unther}
\date{December, 2015}
\begin{document}

\maketitle
\begin{abstract} In this paper, we give a valuation formula for rational top differential forms of function fields in characteristic zero  for arbitrary Abhyankar places generalizing the classical valuation of rational top differential forms at prime divisors. This enables us to defne log discrepancies $a(X,\Delta,\nu)$\, for log pairs $(X,\Delta)$\, for arbitrary Abhyankar places $\nu$.\\
If $\nu$\, is an Abhyankar place of dimension greater than zero, we restrict rational top differential forms $\omega$\, to rational top differential forms $\overline{\omega}$\, of the residue field $\kappa(\nu)$\, of $\nu$\,, generalizing the classical restriction of top differential forms with a simple pole along a smooth divisor.\\
This opens up the door to generalize the classical adjunction machinery to arbitrary Abhyankar places.
\end{abstract} 
\tableofcontents

\section{Notations and Conventions}
We work over the field of complex numbers $\mathbb C$\,. In this paper we will always be concerned with fields $K/\mathbb C$ that have finite transcendence degree over $\mathbb C$\,. There is then always a complete normal integral variety $X$ such that $K\subseteq \overline{K(X)}$\, where the last expression denotes the algebraic closure of the function field of $X$.\\
In case $K$ is finitely generated over $\mathbb C,$\, by $\mbox{Mod}(K/\mathbb C)$\, we denote the partially ordered set of all complete normal models of the function field $K=K(X)/\mathbb C.$\, We say that $Y>X$\, iff there is a necessarily unique proper birational morphism $p: Y\longrightarrow X$\,. We use the expression "for a sufficiently high model $X\in \mbox{Mod}(K/\mathbb C)"$\, to say that there is an $Y\in \mbox{Mod}(K/\mathbb C)$\, such that for all $X>Y$\, the statement ... holds true.\\
By $R(K(X)/\mathbb C)$\, we denote the Zariski-Riemann-variety of the function field $K(X)/\mathbb C$\,. This is a locally ringed space which is the projective limit of all  integral complete normal schemes $(Y,\mathcal O_Y)$\, with $K(Y)=K(X)$\, in the category of locally ringed spaces. Its points are the valuations of the function field $K(X)/\mathbb C$\,.
 (see \cite{Guenther}[chapter 5, pp. 8-37]).  The underlying topological space is a noetherian quasicompact space. If $A\subset K$\, is a $\mathbb C$-subalgebra, we denote by $R\Spec A$\, the set of all $\nu\in R(K/\mathbb C)$\, with $A\subset A_{\nu}$\,.\\
   For $\nu\in R(K(X)/\mathbb C)$\, we denote by
   \begin{enumerate}[1]
   \item $A_{\nu}$\, the corresponding valuation ring which is the stalk of the structure sheaf $\mathcal O_{R(K(X)/\mathbb C)}$\, at the point $\nu$\,;
   \item  $\mathfrak{m}_{\nu}$\,  the maximal ideal of $A_{\nu}$\, which is the set of all $a\in A_{\nu}$\, with $\nu(a)>0$\,;
   \item  $\Gamma_{\nu}=K(X)^*/A_{\nu}^*$\,  the value group of $A_{\nu}$\,;
   \item   $\kappa_{\nu}=\kappa(\nu)$\,  the residue field $A_{\nu}/\mathfrak{m}_{\nu}$\, of $A_{\nu}$\, which is a not necessarily finitely generated extension of the base field $\mathbb C$; 
   \item  $\dim(\nu)$\,  the dimension of the valuation $\nu$\, which is by definition equal to $\mbox{trdeg}(\kappa_{\nu}/\mathbb C)<\infty$\,; 
   \item  $r(\nu)$\,  the rank of $\nu$\, which is by definition equal to the Krull dimension of the not nessesarily noetherian ring $A_{\nu}$\,;
   \item  $rr(\nu)$\,  the rational rank of $\nu$\, which is equal to the dimension of the finite dimensional $\mathbb Q$-vector space $\Gamma_{\nu}\otimes_{\mathbb Z}\mathbb Q$\,.
   \end{enumerate}
 If $(K,\nu)$\, is a valued field and $\gamma_1,...,\gamma_n$\, is a $\mathbb Q$-basis for $\Gamma_{\nu}\otimes_{\mathbb Z}\mathbb Q$\, and $y_1,...,y_n\in K$\, satisfy $\nu(y_i)=\gamma_i,i=1,...,n$ we sloppily say that $(y_1,...,y_n)$ is a $\mathbb Q$-basis for $(K,\nu)$\,. In the same way we use the expression "$y_1,...,y_n$ is a $\mathbb Z$-basis for $(K,\nu)$".\\  
    We consider the following subspaces of $R(K/\mathbb C)$\,.
   \begin{enumerate}
   \item By $R^{k,l}(X/\mathbb C)\subset R(X/\mathbb C)$\, we denote the subset of all valuations $\nu\in R(X/\mathbb C)$\, with $r(\nu)=k, rr(\nu)=l$\,.
   \item By $R_{Ab}(X/\mathbb C)\subset R(X/\mathbb C)$\, we denote the subset of all Abhyankar-places of $K(X)/\mathbb C$\,. 
   \item By  $R^{k,l}_{Ab}(K(X)/\mathbb C)=R_{Ab}(K(X)/\mathbb C)\cap R^{k,l}(K(X)/\mathbb C)$\, we denote the subspace of all Abhyankar places of $K(X)/\mathbb C$\, of rank $k$ and rational rank $l$\,.
   \item Then, $R^{1,1}_{Ab}(K(X)/\mathbb C)$\, is the space of all discrete algebraic rank one valuations of $K(X)/\mathbb C$ which corresponds to the set of all prime divisors $E$ of the function field. We sometimes abbreviate the notation for this space simply by $R^{cl}(K(X)/\mathbb C)$\,.
   \end{enumerate}
    For $X\in \mbox{Mod}(K/\mathbb C)$\, and $\nu\in R(K/\mathbb C)$\, we denote by $c_X(\nu)$\, the center of the valuation $\nu$\, on the model $X$, which is the unique scheme point $\eta\in X$\, such that $\mathcal O_{X,\eta}\subset A_{\nu}$\,.\\
   If $X\in \mbox{Mod}(K/\mathbb C)$\, and $V\subset X$\, is a Zariski-closed subset, by $\overline{V}\subset R(K(X)/\mathbb C)$\, we denote the set of all valuations $\nu$\, such that $c_X(\nu)\in V$\,. The set $\overline{V}$\, is then Zariski-closed in $R(K(X)/\mathbb C)$\,.\\
   If $\omega\in \Lambda^{max}\Omega^1(K/\mathbb C)$\, is a rational top differential form of the function field $K/\mathbb C$\,and $X\in \mbox{Mod}(K/\mathbb C)$,\, then $K_X^{\omega}$\, denotes the divisor of zeroes and poles on $X$ of the rational section of the canonical reflexive sheaf corresponding to $\omega$\,.\\
   By a log pair $(X,D)$\, we understand a pair consisting of a normal complete variety $X$ and an $\mathbb R$-Weil divisor $D=\sum_id_iD_i, d_i\in \mathbb R$\, such that the $\mathbb R$-divisor $K_X+D$\, is $\mathbb R$-Cartier. By a log variety we mean a log pair $(X,D)$\, such that all $0< d_i\leq 1$\,. In this case $D$ is called a boundary.\\
   For a normal variety $X$, we denote the $\mathbb R$-vector space of all $\mathbb R$-Weil divisors by $\text{WDiv}_{\mathbb R}(X)$\,.\\
By a birational divisor ($b$-divisor) in the sense of Shokurov we understand a valuation function $\mathcal D: R^{cl}(K/\mathbb C)\longrightarrow \mathbb Z$\, such that for each model $X$ of $K$ the set $$\{\nu\in R^{cl}(X/\mathbb C)\mid  \dim c_X(\nu)=\dim X-1\,, \mathcal D(\nu)\neq 0\}$$
is finite.\\
In particular, $\mathcal K^{\omega}$\, denotes the canonical b-divisor associated to the rational top differential form $\omega$\, and $\mathcal A(X,D)$\, denotes the discrepancy b-divisor associated to a log pair $(X,D)$\,.\\
In awareness of the existence of higher Kaehler differential modules (see \cite{Promotion}), we denote the classical Kaehler differentials for an extension $f: A\longrightarrow B$\, of commutative rings by $\Omega^1(B/A)$\, .\\
The classical differential of a ring element $b\in B$\, we denote by $d^1b$\,.
\section{Introduction}
Basically, the study of varieties up to birational equivalence is the study of algebraic function fields. Valuation theory is one means to do this. The study of  discrete algebraic rank one valuations can be considered as the study of Cartier divisors on some birational model $X$ of $K=K(X)$\,. The study of linear series on varieties is actually equivalent to looking for rational functions having a value bounded below at some finite set of discrete algebraic rank one valuations.\\
Even if one is interested in particular birational models, for instance the so called minimal models, the process of finding these involves apriori an infinite set of birational models and the definitions of the standard classes of singularities considered in the Log Minimal Model Program involve considering divisors on all sufficiently high birational models of a given variety $X$. The adequate notion for doing this is Shokurov's notion of $b$-divisors, the standard examples of which are the canonical b-divisor of a rational top differential form and the discrepancy b-divisors of a log pair $(X,D)$\,.\\
There are two points that make it seem reasonable to extend the birational study of algebraic varieties to arbitrary valuations. First, there is a well defined object, depending only on the function field, called the Zariski-Riemann variety $R(K/\mathbb C)$ of $K/\mathbb C$, which is a locally ringed space, all of whose local rings are the valuation rings of the function field. It dominates each model of the function field. So whenever arguments involve considering infinitely many birational models of a variety $X$ at one time, one could try to implement  these arguments directly on the Riemann variety or a countable birational limit object in the category of locally ringed spaces and then use the geometric properties of $R(K/\mathbb C)$\, such as e.g quasicompactness  to carry on. For instance, a $b$-Cartier-divisor on  the function field $(K/\mathbb C)$\, is simply a Cartier divisor on the locally ringed space $R(K(X)/\mathbb C)$ (see \cite{Guenther}[chapter 5, Proposition 5.26]). \\
Secondly, if one is able to extend b-divisors, in particular the canonical b-divisor and the discrepancy b-divisors to all valuations of $K$, especially to discrete algebraic rank $n$ valuations, this opens up the possibility to run inductive arguments on the dimension of $X$.\\
We will show in this work that the classical subject of general valuation theory has  a wider scope of applications than commutative algebra and desingularization theory.\\
In section three we briefly review the theory of generalized Laurent-and power series fields attached to a totally ordered abelian group $\Gamma$\, and construct in an adhoc way topological Kaehler differentials that allow for termwise formal differentiation of generalized Laurent series. \\
The key point that we prove is that under an embedding over $\mathbb C$\,, $j: K(X)\hookrightarrow \mathbb C((\Gamma)),$\, where the rational rank of $\Gamma$\, equals the transcendence degree of $K(X)/\mathbb C$,\, the Kaehler differentials of $K(X)/\mathbb C$\, embed into the topological Kaehler differentials of $\mathbb C((\Gamma))$\, (see \prettyref{lem:L15}). That is, if $f\in K(X)$\, and $x_1,...,x_n$\, is a transcendence basis of $K(X)/\mathbb C$\, and 
$$d^1f=\sum_if_i\cdot d^1x_i,$$
 then under the embedding $j$,\, the functions $f_i$\, correspond to the formal partial derivatives in the generalized Laurent series field $\mathbb C((\Gamma))$\,.\\
We prove a valuation formula for rational top differential forms 
$$\omega\in \Lambda^{\text{max}}\Omega^1_{top}(\mathbb C((\Gamma))/\mathbb C)$$ with respect to the canonical valuation $\nu$\, on $\mathbb C((\Gamma))$\, with value group $\Gamma$\,.\\
This will serve as a preparatory step for our general valuation formula for Abhyankar places of function fields.\\
The basic idea is very elementary. Suppose, you want to valuate top differential forms $\omega=f(x)\cdot d^1x$\, on the affine line $\mathbb A^1_{\mathbb C}$\, at zero. Let $t=x^n\cdot u$\, $n\in \mathbb Z, u\in \mathbb \mathcal O^*_{\mathbb A^1,0}$\, a unit. Let us write $\omega=f_t(x)\cdot d^1t$\,. Since there is an embedding $\mathbb C(x)\hookrightarrow \mathbb C((x))$\, we may write $t$ as a Laurent series
$$t=a_n\cdot x^n+a_{n+1}\cdot x^{n+1}+...+a_N\cdot x^N+... .$$
 Then, formally 
$$d^1t=\partial^1t/\partial^1x\cdot d^1x\,\, \text{and}\,\, f_t(x)\cdot \partial^1t/\partial^1x=f(x).$$
 If we define $\nu_0(\omega)=\nu_0(f)+\nu_0(x)=\nu_0(f)+1$\, we see that 
\begin{gather*}\nu_0(f_t(x))+\nu_0(t)=\nu_0(\frac{f(x)}{\partial^1t/\partial^1x})+\nu_0(t)\\
=\nu_0(f(x))-(n-1)+n=\nu_0(f(x))+1
\end{gather*} this formula is independent of the choosen local parameter $t$ at zero. This easy observation is the basic philosophy for proving a  valuation formula for rational top differential forms at general Abhyankar places.\\
 The reason, why we restrict to Abhyankar places is that we work with embeddings into fields of generalized Laurent series and we want the rank of the topological Kaehler differentials of these fields to be equal to the transcendence degree of our function field. If $K(X)\hookrightarrow \mathbb C((x))$\, is the embedding given by an arc on a model of $K(X)$\,, which corresponds to a discrete nondivisorial valuation of rank one, the topological differentials $\Omega^1_{top}(\mathbb C((x))/\mathbb C)$\, have rank one and we cannot write for $\mbox{trdeg}(K(X)/\mathbb C)\geq 2 $\, rational top differential forms of $K(X)/\mathbb C$\, as such forms on $\mathbb C((x))$\,; the image form under the embedding of fields simply becomes zero.\\
Observe that, for divisorial valuations $\nu_E$ of function fields, there is always a complete model $X$ such that $\nu_E$ has divisorial center on $X$ and then the local ring $\mathcal \mathcal O_{X,E}$\, is the discrete valuation ring and you can valuate such top differential forms as rational sections of the canonical sheaf on $X$. If the Abhyankar place $\nu$\, is arbitrary, the corresponding valuation ring $A_{\nu}$\, is nonnoetherian and there is no model such that this becomes a local ring on a model. If you write $A_{\nu}$\, as the direct limit of its noetherian local subrings, which are local rings on different models of the function fields, you must consider rational sections of canonical sheaves on all models and then somehow pass to a limit, which, at least to myself, is not clear to exist in $\Gamma_{\nu}$\,. So straight forward generalizations of divisorial valuations of top differential forms will not be very sucsessfull.\\
In section five, we prove the general valuation formula for $\nu(\omega),$\, where $\omega$\, is a rational top differential form of a function field $K(X)/\mathbb C$\, and $\nu\in R_{Ab}(K(X)/\mathbb C)$\, is an Abhyankar place. The proof requires some commutative algebra and takes the main part of this section. We compare our valuation of rational top differential forms in case of $\nu\in R^{cl}(K(X)/\mathbb C)$\, to the classical valuation. Namely, we always have in this case 
$$\nu(\omega)=\nu_{cl}(\omega)+1\in \mathbb Z.$$
 Thus one might say that our valuation is some kind of "log valuation".\\
In section six, we generalize the classical Poincar$\acute{e}$-residue map, that tells us how to restrict a rational differential form $\omega$\, on a smooth variety $X$ to a smooth hyperplane $H$ if $\omega$\, has a simple pole along $H$\,. We construct a residue map
$$\Lambda^{n}\Omega^1(K(X)/\mathbb C)_{\nu=0}\longrightarrow \Lambda^{n-k}\Omega^1(\kappa(\nu)/\mathbb C),\,\, n=\mbox{trdeg}(K(X)/\mathbb C),\,\, k=r(\nu),$$
where $\nu$\, is an Abhyankar place of $K(X)/\mathbb C$\, with $rr(\nu)=k$\, and the first quantity in the displayed map denotes the set of all rational top differential forms $\omega$\, with $\nu(\omega)=0$\,. We show that in case that $\nu\in R^{cl}(K(X)/\mathbb C)$\,  this map specializes to the classical Poincar$\acute{e}$-residue map.\\
In section seven, we generalize the definition of log discrepancy of a log pair $(X,D)$\, to arbitrary Abhyankar places. We prove that if $(X,D)$\, is a klt (lc) log variety, then the generalized log discrepancy is positive (nonnegative) for all $\nu\in R_{Ab}(K(X)/\mathbb C)$\,.\\
In section seven, we use our generalized Poincar$\acute{e}$-residue map to define adjunction for arbitrary log canonical centers of an lc log variety $(X,D)$\,. As an application, we show  as an example how a particular case of the nowadays well known adjunction theorem (see \cite{Kollar}[chapter 4.1, Theorem 4.9, p. 158] can be proved using higher Abhyankar places.\\
We also generalize the well know monotonicity lemma (see \cite{Matsuki}[Lemma 9-1-3, pp.320-321]) for log flips to arbitrary Abhyankar places.\\
In a final outlook we propose some future applications to the log minimal model program of our theory.
 \section{Review of valuation theory}
  We will use in this paper only well known facts about valuation theory of function fields as can be found in \cite{ZaSam}[Volume II, chapter VI], which we will review for the convenience of the reader. For a good readable account of the basics of valuation theory, see \cite{Val}. We will only be concerned with Krull valuations of a field $K$, or, more generally of the relative situation of a field extension $K/k$\,, $k$ being a base field of characteristic zero.\\
   We recall that a Krull valuation is a homomorphism of abelian groups 
   $$\nu:K^*=K\backslash \{0\}\longrightarrow \Gamma,$$ $K^*$\, with the multiplication and $\Gamma$\, being totally ordered, such that for all $f,g\in K$\, one has $$\nu(f+g)\geq \rm{min}(\nu(f),\nu(g)),$$ where $\nu(0)$\, is considered to be $\infty$\,.\\
    The set $A_{\nu}\subset K$\, consisting of all $f\in K$\, such that $\nu(f)\geq 0$\, (including $f=0$\,) is easily seen to be a ring, not necessarily noetherian, called the valuation ring of $\nu$\, and the subset $\mathfrak{m}_{\nu}\subset A_{\nu}$\, consisting of elements $f$\, with $\nu(f)>0$\, is the maximal ideal.
     The field $k_{\nu}:=A_{\nu}/\mathfrak{m}_{\nu}$\, is the usual residue field and the image of the homomorphism $\nu$\, in $\Gamma$\, is denoted by $\Gamma_{\nu}$\, and is called the value group of $\nu$\,.\\
      Two valuations are said to be equivalent, if the associated valuation rings are equal.\\
    Valuation rings inside a field $K$\, can be characterized as being maximal subrings of $K$ with respect to the partial order of domination between local rings, i.e., the relation $(A,m)<(B,n)$\, iff there is a local homomorphism $(A,m)\longrightarrow (B,n)$\, being the identity on generic points (see \cite{Val}[chapter 1, p.1]) .\\
     From this maximality property a valuation ring $A_{\nu}$\, has the property that for each $f\in K^*$\, at least one of the two elements $f$ and $f^{-1}$\, is in $A_{\nu}$\,. It follows, that the abelian group $K^*/A_{\nu}^*$\, is totally ordered with respect to the relation $$[f]<[g]\,\, \mbox{if}\,\,[gf^{-1}]\in A_{\nu}.$$
     There is always  a canonical isomorphism of totally ordered abelian groups $$(*):K^*/A_{\nu}^*\cong \Gamma_{\nu}\,\, \mbox{sending}\,\, [f]\,\, \mbox{to}\,\, \nu(f).$$
      If $L\subset K$\, is any subfield, we may restrict $\nu$\, to $L$ and obtain a valuation of $L$, denoted by $\nu\mid_L$\,.\\
      It is known (see \cite{ZaSam}[Vol.I, chapter 7, Theorem 11] that each valuation $\nu$ of a field $K$ can be extended to a valuation $\mu$\, of any field extension $K\subset L$\,, algebraic or not, such that $\mu\mid_K=\nu$\,.\\
A valuation $\nu$\, of $K$\, will be called  over a subfield $k\subset K$\,, if $\nu\mid_k$\, is the trivial valuation, i.e. $\nu(c)=0$\, for all $c\in k^*$\,. $k$ will usually be the fixed base field.  If we assume the restriction to be trivial, we will simply write $\nu\in R(K/k)$\,.\\ 
We denote the dimension of the $\mathbb Q$-vector space $\Gamma_{\nu}\otimes_{\mathbb Z}\mathbb Q$\, by $rr(\nu)$\, and call it the rational rank of the valuation $\nu$. The Krull dimension of the valuation ring $A_{\nu}$\, will be called the rank of $\nu$\,, denoted by $r(\nu)$\,. We always have an inequality $r(\nu)\leq rr(\nu)$\,. The transcendence degree $\mbox{trdeg}(k_{\nu}/k)$\, will be called the dimension of $\nu$\, denoted by $\dim(\nu)$\,. A valuation with $\dim(\nu)=0$\, is also called rational.\\
A segment $\Delta$\, of a totally ordered abelian group $\Gamma$\, is a symmetric subset such that 
$$ \alpha,\beta\in \Delta, \alpha <\gamma< \beta \vee \beta<\gamma<\alpha \Rightarrow \gamma\in \Delta.$$
By \cite{Val}[chapter 1.2, Proposition 1.6, p.4] there is a one-to-one inclusion reversion correspondence between segments of $\Gamma_{\nu}$\, and ideals of $A_{\nu}$\,. If $\mathfrak{a}\subset A_{\nu}$\, is an ideal, then the associated segment $\Delta(\mathfrak{a})$\, is defined by 
$$\Delta(\mathfrak{a}):=\{\gamma\in \Gamma\mid \gamma<\nu(a)\,\,\forall a\in \mathfrak{a}\vee \gamma >-\nu(a)\forall a\in \mathfrak{a}\}.$$
If $\Delta\subset \Gamma$\, is a segment, then the associated ideal $\mathfrak{a}(\Delta)$\, is defined by 
$$\mathfrak{a}(\Delta):=\{a\in A_{\nu}\mid \nu(a)> \delta\,\,\forall \delta\in \Delta\}.$$
We recall that the order rank of a totally ordered abelian group $\Gamma$\, is the length of a maximal chain of convex subgroups
$$0\subsetneq \Gamma_1\subsetneq \Gamma_2\subsetneq ...\subsetneq \Gamma_n=\Gamma.$$
By \cite{Val}[chapter 1.2, Corollary, Theorem 1.7,p.5], the Krull dimension of $A_{\nu}$\, equals the order rank of $\Gamma_{\nu}.$\\ 
If $K$ happens to be of finite transcendence degree over $k$, all the above defined numbers are finite and there is always the fundamental Abhyankar inequality $$rr(\nu)+\dim(\nu)\leq \rm{trdeg}(K/k).$$ A place $\nu$\, for which equality holds is called an Abhyankar place. If $K$ happens to be finitely generated over $k$, then for an Abhyankar place $\nu$\,, the value group $\Gamma_{\nu}$\, is known to be a finite (free) $\mathbb Z$-module and the residue field $k_{\nu}:=A_{\nu}/\mathfrak{m}_{\nu}$\, is known to be finitely generated over $k$. Both properties may fail in general for arbitrary places of a function field $K/k$\, and such places always exist if $\rm{trdeg}(K/k)\geq 2$\,.\\
It is known that every totally ordered abelian group $\Gamma$\, with finite rational rank can be order imbedded into some $\mathbb R^n$\,with the lexicographic product order of the usual order on the factors $\mathbb R$\,. We always have $n\geq r(\Gamma)$\, and it is known that the smallest $n\in \mathbb N$\, for which there exists such an embedding is equal to the  rank, i.e. if $\Gamma=\Gamma_{\nu}$, then $n$ equals the Krull dimension of $A_{\nu}$\,, hence the rank of $\nu$\,. \\
We will consider in this paper exclusively fields $K$ of finite transcendence degree over a base field $k$\,. We denote by $R^{n,l}(K/k)$\, the subset of all valuations $\nu$\, of $R(K/k)$\, with $r(\nu)=n$\, and $rr(\nu)=l$\,. We denote by $R_{Ab}^{n,l}(K/k)\subset R^{n,l}(K/k)$\, the subset of all such Abhyankar places. \\
The space of all algebraic discrete rank one valuations ($R^{1,1}_{Ab}(K/k)$) we will simply denote by $R^{cl}(K/k)$\,.
\\
If $A_{\nu}\subset K$\, is a valuation ring and $A_{\nu_1}\subset k_{\nu}$\, is another valuation ring, we can form the composed ring $$A_{\nu\circ \nu_1}:=p^{-1}(A_{\nu_1})$$
 where $p:A_{\nu}\longrightarrow k_{\nu}$\, is the canonical residue map. As the notation suggests, $A_{\nu\circ \nu_1}$\, is again a valuation ring of $K$ and the corresponding valuation is called the composition of $\nu$\, with $\nu_1$\,, denoted by $\nu\circ \nu_1$\,. Because $$A_{\nu\circ \nu_1}\backslash\mathfrak{m}_{\nu\circ \nu_1}\subset A_{\nu}\backslash\mathfrak{m}_{\nu},$$
  there is a canonical surjection $$K^*/A_{\nu\circ \nu_1}^*\longrightarrow K^*/A_{\nu}^*,$$
   whose kernel is canonically isomorphic to $p^{-1}(k_{\nu_1}^*)/p^{-1}(A_{\nu_1}^*)$\, which is again canonically isomorphic to $k^*_{\nu_1}/A_{\nu_1}^*$\,. \\
   Using the canonical isomorphisms $(*)$\,, we get a canonical exact sequence of ordered abelian groups 
   $$(**)\,\,0\longrightarrow \Gamma_{\nu_1}\longrightarrow \Gamma_{\nu\circ \nu_1}\longrightarrow \Gamma_{\nu}\longrightarrow 0,$$ such that $\Gamma_{\nu_1}\longrightarrow \Gamma_{\nu\circ \nu_1}$\, is an order inclusion and $\Gamma_{\nu_1}$\, is a  convex subgroup of $\Gamma_{\nu\circ \nu_1}$\, in the sense that if 
   $$\alpha,\beta\in \Gamma_{\nu_1}, \gamma\in \Gamma_{\nu\circ \nu_1}\,\, \mbox{and}\,\, \alpha<\gamma<\beta,\,\, \mbox{then}\,\, \mbox{also}\,\, \gamma\in \Gamma_{\nu_1}.$$ The order on $\Gamma_{\nu}$\, is isomorphic under these isomorphisms to the induced quotient order on $\Gamma_{\nu\circ \nu_1}/\Gamma_{\nu_1}$\,.\\ Every totally ordered abelian group must necessarily be a torsion free $\mathbb Z$-module, hence every such sequence possesses a (noncanonical) order preserving splitting $$\Gamma_{\nu}\longrightarrow \Gamma_{\nu\circ \nu_1}.$$
    If $\Gamma_{\nu}$\, happens to possess a finite $\mathbb Z$-basis $\gamma_i, i=1,...,n$\,, then any choice of elements $t_1,...,t_n$\, with $\nu(t_i)=\gamma_i$\,, which we sloppily call a $\mathbb Z$-basis of $\nu$\,, induces a splitting of $(**)$\,. Namely, writing $\gamma\in \Gamma_{\nu}$\, as $[f]\in K^*/A_{\nu}^*$\, under the above isomorphism $(*)$\,, we have $$\nu(f)=\sum_in_i\nu(t_i),\,\,\text{for some}\,\, n_i\in \mathbb Z.$$
We first send $[f]$\, to 
$$\nu_1(p(f\cdot\prod t_i^{-n_i}))\in \Gamma_{\nu_1}\,\, \mbox{(observe}\,\, f\cdot \prod_it_i^{-n_i}\in A_{\nu}^*).$$ This is indeed a group homomorphism. We define the isomorphism $$\phi(\underline{t}):\Gamma_{\nu\circ \nu_1}\cong \Gamma_{\nu}\oplus \Gamma_{\nu_1}$$ by sending $[f]$\, to $(\nu(f), \nu_1(p(f\cdot \prod_it_i^{-n_i})))$\,.\\
For later usage, we prove for the lack of reference the following
\begin{lemma}\mylabel{lem:L100} Let $\nu\in R(\overline{K(X)}/\mathbb C)$\, be an arbitrary nonarchimedian valuation. Then $\Gamma_{\nu}$\, has naturally the structure of a $\mathbb Q$-vector space.
\end{lemma}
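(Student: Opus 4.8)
The plan is to exploit the single special feature of the field in question, namely that $\overline{K(X)}$ is algebraically closed, and to deduce from this that $\Gamma_{\nu}$ is a divisible torsion-free abelian group; the $\mathbb{Q}$-vector space structure then comes for free from the classification of such groups.

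First I would record that $\Gamma_{\nu}$ is torsion-free: being a totally ordered abelian group, if $\gamma\neq 0$ we may assume $\gamma>0$ (replacing $\gamma$ by $-\gamma$ if necessary), and then $n\gamma>0$ for every integer $n\geq 1$, so $n\gamma\neq 0$. Hence $\Gamma_{\nu}$ is a torsion-free $\mathbb{Z}$-module. Next comes the one substantive step, divisibility. Let $\gamma\in\Gamma_{\nu}$ and let $n\geq 1$ be an integer. By definition of the value group there is an element $f\in \overline{K(X)}^{*}$ with $\nu(f)=\gamma$. Since $\overline{K(X)}$ is algebraically closed, the polynomial $T^{n}-f$ has a root $g\in\overline{K(X)}$, and $g\neq 0$ because $f\neq 0$, so $g\in\overline{K(X)}^{*}$. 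Applying the group homomorphism $\nu\colon \overline{K(X)}^{*}\to\Gamma_{\nu}$ to the relation $g^{n}=f$ yields $n\,\nu(g)=\nu(f)=\gamma$, which shows that $\gamma$ is divisible by $n$ in $\Gamma_{\nu}$.

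Finally I would invoke the standard fact that a torsion-free divisible abelian group $A$ admits a unique $\mathbb{Q}$-vector space structure extending its $\mathbb{Z}$-module structure: for $\gamma\in A$ and $n\geq 1$, divisibility provides some $x$ with $nx=\gamma$ and torsion-freeness makes $x$ unique, so one may unambiguously set $(1/n)\cdot\gamma:=x$ and then define $(m/n)\cdot\gamma:=m\,x$; the vector space axioms are checked by a routine verification that reduces to the uniqueness of $n$-th "quotients". Applied to $A=\Gamma_{\nu}$ this gives the assertion, and since the construction is forced by the existing group structure it is canonical, hence natural in $\nu$. I do not anticipate any real obstacle here; the only point that needs a word of care is that the element realizing a given value is nonzero, so that its $n$-th root is again an element of $\overline{K(X)}^{*}$ and its value lies in $\Gamma_{\nu}$, which is immediate from $\gamma\in\Gamma_{\nu}$ meaning $\gamma=\nu(f)$ with $f\in \overline{K(X)}\setminus\{0\}$.
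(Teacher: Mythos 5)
Your proof is correct, and its one substantive step -- extracting an $n$-th root of an element of value $\gamma$ inside the algebraically closed field to conclude that $\Gamma_{\nu}$ is divisible -- is exactly the mechanism the paper uses. The packaging, however, is genuinely different. The paper fixes a $\mathbb Q$-basis $t_1,\dots,t_k$ of $\Gamma_{\nu}$ with representatives in $K(X)$, passes to the finite extensions $L_N=K(X)(t_1^{1/N},\dots,t_k^{1/N})$ inside $\overline{K(X)}$, and checks by an explicit computation that every element of $\Gamma_{\nu}\otimes_{\mathbb Z}\mathbb Q$ is realized as a value, concluding $\Gamma_{\nu}\cong\Gamma_{\nu}\otimes_{\mathbb Z}\mathbb Q$; this implicitly leans on the finite rational rank of $\nu$ (which the paper's standing conventions guarantee). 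Your route -- torsion-freeness from the total order, divisibility from algebraic closedness, then the classification of divisible torsion-free abelian groups as $\mathbb Q$-vector spaces -- avoids choosing a basis and works with no hypothesis on the rational rank, and it also makes the word ``naturally'' in the statement transparent, since the $\mathbb Q$-structure on a torsion-free divisible group is unique. So your argument is the more elementary and more general one; the paper's buys an explicit description of $\frac{p}{q}\gamma$ as the value of a concrete product of roots $\prod t_i^{p_i/q_i}$, which is closer in spirit to how $\mathbb Q$-bases are manipulated elsewhere in the paper.
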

\begin{proof} Let $t_1,...,t_k\in K(X)$\, be a $\mathbb Q$-basis for $\Gamma_{\nu}$\,. For each $n\in \mathbb N$\,, let $L_n=K(X)(t_1^{\frac{1}{n}}, ...t_k^{\frac{1}{n}})$\,. Let $\nu_n$\, be the restriction  of $\nu$\, to $L_n$.\, Obviously, we must have $\nu(t_i)=n\cdot \nu_n(t_i^{\frac{1}{n}}), i=1,...,k$\,. Let 
$$\gamma=\sum_{i=1}^k\frac{p_i}{q_i}\cdot \nu(t_i)\in \Gamma_{\nu}\otimes_{\mathbb Z}\mathbb Q,\,\, p_i,q_i\in \mathbb Z$$ be arbitrary. Put $N=q_1\cdot ...\cdot q_k$\, and $t=\prod_{i=1}^kt_i^{\frac{p_i}{q_i}}\in L_N$\,. Then 
$$\nu_N(t)=\sum_{i=1}^kp_i\cdot \nu_N(t_i^{\frac{1}{q_i}})=\sum_{i=1}^k\frac{p_i}{q_i}\cdot \nu(t_i)=\gamma.$$ Thus $\gamma\in \Gamma_{\nu_N}\subseteq \Gamma_{\nu}$\,. Since $\gamma$\, was arbitrary, we conclude $\Gamma_{\nu}\cong \Gamma_{\nu}\otimes_{\mathbb Z}\mathbb Q$\, and the assertion follows.
\end{proof}
For the lack of reference, we prove the following
\begin{lemma}
\mylabel{lem:L1}
Let $K/\mathbb C$\, be a function field and $\overline{K}$\, its algebraic closure. Let $\nu$ an arbitrary Krull valuation of $K$, being trivial on $\mathbb C$\, and $\mu$ be an extension of $\nu$\, to $\overline{K}$\,. Then there exists a $\mathbb Q$ basis $t_1,...,t_k, k=rr(\nu)$ for $\nu$ plus a compatible system of roots $t_i^{\frac{1}{n}}\in \overline{K},\,\, \forall i=1,...,k\,\,\forall n\in \mathbb N$\,.
\end{lemma}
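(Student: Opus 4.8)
The plan is to split the statement into two essentially independent tasks: producing the $\mathbb{Q}$-basis inside $K$, and then producing a coherent system of roots inside $\overline{K}$. First I would choose elements $t_1,\dots,t_k\in K^*$ whose values $\nu(t_1),\dots,\nu(t_k)$ form a $\mathbb{Q}$-basis of $\Gamma_\nu\otimes_{\mathbb{Z}}\mathbb{Q}$. Such elements exist essentially by definition of $rr(\nu)=\dim_{\mathbb{Q}}(\Gamma_\nu\otimes_{\mathbb{Z}}\mathbb{Q})=k$: since $\Gamma_\nu$ is generated as an abelian group by the values $\nu(f)$, $f\in K^*$, one can extract a $\mathbb{Q}$-basis of $\Gamma_\nu\otimes\mathbb{Q}$ from among these values. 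By the conventions recalled above, this is exactly what it means for $(t_1,\dots,t_k)$ to be a $\mathbb{Q}$-basis for $(K,\nu)$, and since the $t_i$ will not be modified afterwards, nothing more has to be checked on the $K$-side.

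Next I would construct the roots. The key observation is that $\overline{K}^*$ is a divisible abelian group --- every $a\in\overline{K}^*$ has an $n$-th root because $\overline{K}$ is algebraically closed of characteristic zero --- hence an injective $\mathbb{Z}$-module. Fixing $i$, the homomorphism $\mathbb{Z}\to\overline{K}^*$ sending $1\mapsto t_i$ therefore extends along the inclusion $\mathbb{Z}\hookrightarrow\mathbb{Q}$ to a group homomorphism $\varphi_i\colon\mathbb{Q}\to\overline{K}^*$; setting $t_i^{1/n}:=\varphi_i(1/n)$ gives elements of $\overline{K}$ with $(t_i^{1/n})^n=\varphi_i(1)=t_i$ and $(t_i^{1/(nm)})^m=\varphi_i(1/n)=t_i^{1/n}$ for all $m,n\in\mathbb{N}$, i.e.\ a compatible system of roots. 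If one prefers to avoid the injective-module language, the same system can be produced by hand: choose $t_i^{1/n!}$ recursively as an $(n+1)$-st root of $t_i^{1/n!}$, and for an arbitrary $m$ put $t_i^{1/m}:=(t_i^{1/n!})^{n!/m}$ for any $n$ with $m\mid n!$, then check that this value is independent of the chosen $n$.

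Finally I would record the behaviour under $\mu$: since $\mu$ restricts to $\nu$ on $K$ and $\Gamma_\mu$ is torsion-free, the relation $n\,\mu(t_i^{1/n})=\mu(t_i)=\nu(t_i)$ forces $\mu(t_i^{1/n})=\frac{1}{n}\nu(t_i)$, so the roots genuinely realize the $\mathbb{Q}$-divisions of the basis elements inside $\Gamma_\mu\otimes\mathbb{Q}$ (and in particular the $\nu(t_i)$ remain $\mathbb{Q}$-linearly independent, so nothing about the basis property is lost). The only step requiring real care is the global coherence of the root system over all $n$ at once --- rather than choosing roots $n$ by $n$ --- and this is precisely what divisibility (equivalently, injectivity) of $\overline{K}^*$, or the explicit factorial construction, secures; every other point in the argument is immediate.
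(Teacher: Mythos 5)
Your proof is correct, but it takes a genuinely different route from the paper. The paper produces the compatible roots by Artin--Schreier theory: it passes to a real closed subfield $R\subset\overline{K}$ with $[\overline{K}:R]=2$, chooses the $t_i$ to be \emph{positive} elements of $R\cap K$ (replacing $t_i$ by $-t_i$ if necessary, which does not change the value), and then uses the fact that a positive element of a real closed field has a \emph{unique} positive $n$-th root; uniqueness is what forces the compatibilities $(t_i^{1/(nm)})^m=t_i^{1/n}$ and $t_i^{r}\cdot t_i^{s}=t_i^{r+s}$ automatically. You instead observe that $\overline{K}^*$ is divisible, hence an injective $\mathbb Z$-module, and extend $1\mapsto t_i$ to a homomorphism $\varphi_i\colon\mathbb Q\to\overline{K}^*$; the compatibilities (including the multiplicativity $t_i^r t_i^s=t_i^{r+s}$ that the paper also wants for its application to Kaplansky's theorem) are then built into the homomorphism property. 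Your argument is more elementary and more general -- it needs only divisibility of $\overline{K}^*$, not the existence of a real closed index-two subfield -- at the cost of canonicity: your roots are a coherent \emph{choice}, whereas the paper's are uniquely determined by positivity. Since the downstream use in \prettyref{thm:T2} only requires existence of some compatible system, this loss is harmless. Two minor points: your fallback ``factorial'' construction has a circular-looking typo (you mean: choose $t_i^{1/(n+1)!}$ as an $(n+1)$-st root of $t_i^{1/n!}$), and the closing remark about $\mu(t_i^{1/n})=\tfrac1n\nu(t_i)$, while true and relevant to how the lemma is used later, is not needed for the statement itself.
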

\begin{proof} We consider $\overline{K}$ as a field extension over $\mathbb R$\, which has again finite transcendence degree. Let  $R\subset \overline{K}$\, with $[\overline{K}:R]=2$\,be the  real closed subfield. We may choose a $\mathbb Q$-basis for $\nu$\, in $R\cap K$, since the restriction of $\nu$\, to $R\cap K$ has the same rational rank. Moreover, we may choose $t_1,...,t_k$\, to be positive, i.e.  $t_i\geq 0$\,in $R$. This is possible since either $t$ or $-t$ is positive and the sign does not effect the value of $t$. Since $R$ is real closed and all $t_i$ are positive, there exist in $R$ for each $n$ an $n$th root of $t_i$\,.\\ If we moreover require that $t_i^{\frac{1}{n}}>0$\,, then these roots are uniquely determined because the quotient of two such roots must be a real positive root of unity, hence must be $1$.\\
 By the same reasoning, we must have $(t_i^{\frac{1}{nm}})^m=t_i^{\frac{1}{n}}$\, and we can define for each $r\in \mathbb Q$ unambigously $t_i^{r}$\, and we have $t_i^r\cdot t_i^s=t_i^{r+s}$\,.
\end{proof}
We will make essential use of the following
\begin{theorem} \mylabel{thm:T2}
 (Embedding Theorem of Kaplansky)\\
 Let $(K,\nu)/\mathbb C$\, be an arbitrary valued field  such that $\nu$  has dimension zero, $\dim(\nu)=0,$\, i.e. $\kappa(\nu)= A_{\nu}/\mathfrak{m}_{\nu}=\mathbb C$\,. Let $\Gamma_{\nu}$\, be the value group. Then there is an embedding of fields over $\mathbb C$: $\phi: K\hookrightarrow \mathbb C((\Gamma_{\nu}))$\,  into the field of generalized Laurent series with exponents in $\Gamma_{\nu}$\, (see the next section) such that the canonical valuation of $\mathbb C((\Gamma_{\nu}))$\, restricted via $\phi$\, to $K$\, gives back the valuation $\nu$\,.
 \end{theorem}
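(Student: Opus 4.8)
The plan is to obtain \prettyref{thm:T2} from the classical structure theory of maximally complete valued fields. Recall that a valued field is \emph{maximally complete} (equivalently, spherically complete) when it has no proper immediate extension, i.e.\ no proper extension with the same value group and the same residue field, and that for every totally ordered abelian group $\Gamma$ the Hahn field $\mathbb{C}((\Gamma))$ is maximally complete with value group $\Gamma$ and residue field $\mathbb{C}$ under its canonical valuation. Note first that the hypothesis $\dim(\nu)=0$ forces the residue map to carry the fixed subfield $\mathbb{C}\subseteq A_\nu$ \emph{isomorphically} onto $\kappa(\nu)=A_\nu/\mathfrak{m}_\nu$, since $\kappa(\nu)$ is then algebraic over an algebraically closed subfield; so $\mathbb{C}$ is a coefficient field of $A_\nu$. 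The first step of the proof is to embed $(K,\nu)$ into a maximal immediate extension $(\widehat K,\widehat\nu)$: using the extension theorem for valuations recalled above one fixes a valued algebraic closure, and a Zorn's Lemma argument on the family of subextensions of $K$ that are immediate over $K$ produces a maximal such $\widehat K\supseteq K$; it then has $\Gamma_{\widehat\nu}=\Gamma_\nu$, residue field $\mathbb{C}$, and is maximally complete.

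The second step invokes Kaplansky's uniqueness theorem: two maximally complete valued fields with isomorphic value groups and isomorphic residue fields are isomorphic as valued fields, provided the residue field satisfies Kaplansky's ``Hypothesis~A''. In equal characteristic zero Hypothesis~A is vacuous, so it applies to $\widehat K$ and $\mathbb{C}((\Gamma_\nu))$; moreover, since $\mathbb{C}$ is a coefficient field on both sides, the isomorphism can be chosen to restrict to the identity on $\mathbb{C}$. This yields a valued-field isomorphism $\psi\colon\widehat K\to\mathbb{C}((\Gamma_\nu))$ over $\mathbb{C}$. Composing with the inclusion $K\hookrightarrow\widehat K$ gives the desired $\phi:=\psi|_K$; it is a $\mathbb{C}$-embedding, and the canonical valuation of $\mathbb{C}((\Gamma_\nu))$ pulls back along $\phi$ to $\widehat\nu|_K=\nu$, its value group being all of $\Gamma_\nu$ because $K\hookrightarrow\widehat K$ is immediate.

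I expect the real work to lie in making the uniqueness step self-contained rather than merely citing it. The direct route is to construct $\phi$ by transfinite recursion: well-order a generating set of $K$ over $\mathbb{C}$, extend $\phi$ one generator at a time, and at each step choose the image of the new generator as a pseudo-limit in $\mathbb{C}((\Gamma_\nu))$ of the pseudo-Cauchy sequence formed by the images of better and better approximations, checking at every stage that the partial map remains a valuation-preserving $\mathbb{C}$-algebra homomorphism. Two points need care: (i) when the new generator is algebraic over the already-embedded subfield, one must see that the value group stays inside $\Gamma_\nu$ and the residue field stays equal to $\mathbb{C}$ --- which holds precisely because we are in residue characteristic zero with algebraically closed residue field --- so that a suitable pseudo-limit still exists in the target; and (ii) convergence at limit ordinals, which is exactly what the maximal completeness (spherical completeness) of $\mathbb{C}((\Gamma_\nu))$ is there to guarantee. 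It is worth noting that no finiteness or finite-generation assumption on $\Gamma_\nu$ enters, Hahn series accommodating arbitrary totally ordered value groups; this is what makes the statement flexible enough to serve the later sections.
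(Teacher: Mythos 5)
Your proposal is correct and rests on essentially the same foundation as the paper: the paper's proof is itself a two-line citation of Kaplansky's \S3 Theorem~5 (uniqueness of maximal immediate extensions under Hypothesis~A, vacuous in equal characteristic zero) and \S4 Theorem~8 (structure of maximally complete fields as generalized power series fields), which are exactly the results you invoke via the maximal-immediate-extension and uniqueness route. The one point of divergence is that Kaplansky's structure theorem a priori only yields a power series field with a \emph{factor set} (a twisted multiplication $z^{\gamma}\cdot z^{\delta}=f(\gamma,\delta)z^{\gamma+\delta}$); the paper normalizes this away by means of the compatible system of roots constructed in \prettyref{lem:L1}, whereas you sidestep it by comparing $\widehat K$ directly with the untwisted Hahn field $\mathbb C((\Gamma_{\nu}))$, which works because the latter is itself maximally complete with the prescribed invariants and because $\mathbb C$ is a coefficient field on both sides --- a slightly cleaner packaging of the same Kaplansky machinery. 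Your closing sketch of how to make the uniqueness step self-contained (transfinite recursion on pseudo-Cauchy sequences, with spherical completeness handling limit stages) goes beyond what the paper attempts, which simply defers to Kaplansky for the details.
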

 \begin{proof}   The assumption of \cite{Kaplansky}[ §3, Theorem 5, p.312 and §4, Theorem 8, p.318] are all satisfied. That there is an embedding into a field of Laurent series without certain factor sets  follows from the existence of a compatible set of roots of a $\mathbb Q$-basis for $\nu$\, proven in \prettyref{lem:L1}. For more details see \cite{Kaplansky}[ Paragraph 4].
 \end{proof}
 We will only need this theorem in the case where $K=K(X)$ is an algebraic function field over $\mathbb C$ or $K=\overline{K(X)}$\, is its algebraic closure and where $\nu$\, is an Abhyankar place of dimension zero.
 \paragraph{Examples of zero dimensional valuations}
 \begin{example}\mylabel{ex:E60}(Monomial valuations)\\
Suppose $(A,\mathfrak{m},k)$\, is a regular local $k$-algebra essentially of finite type and the residue field $k$ is algebraically closed ( and of characteristic zero). Then $(A,\mathfrak{m},k)$\, is a smooth local $k$-algebra and there are elements  $r_1,...,r_n\in A$\, such that the induced morphism 
$$\phi:k[x_1,...,x_n]_{(x_1,...,x_n)}\longrightarrow A, x_i\mapsto r_i, i=1,...,n$$ is etale (see \cite{SGA}[SGA I, Expose II, Definition 1.1, p.29]). The parameters $r_1,r_2,...,r_n$\,  then necessarily form  a regular sequence generating the maximal ideal $\mathfrak{m}$\,. Let $\widehat{A}$\, be the completion of $A$ along the maximal ideal. As $\phi$\, is etale, we have 
$$\widehat{A}\cong \widehat{k[x_1,x_2,...x_n]_{(x_1,x_2,...,x_n)}}\cong k[[x_1,x_2,...,x_n]].$$
We have thus an inclusion $A\hookrightarrow k[[x_1,...,x_n]]$\, where $r_i$\, maps to $x_i$\, for $i=1,...,n$\,.  Now choose positive real numbers $c_i>0$\, and if 
$$p(r_1,...,r_n)=\sum_I\underline{x}^I$$ is a homogenous polynomial define 
$$\nu(p)=\mbox{min}_I(\prod_{j=1}^ni_j\cdot c_j)>0\,\,\text{where}\,\, I=(i_1,i_2,...,i_n).$$
 Now if $p$\, is a power series in $\widehat{A}$\,, define $$\nu(p)=\mbox{inf}_n\nu(p_n),$$
 where $p_n$\, is the homogeneous part of $p$ of degree $n$. If $c=\mbox{min}_{i=1}^n c_i$\,, then \\
$\nu(p_n)\geq nc$\,, which tends to infinity. Thus the above infemum is for every power series actually a minimum. It follows that  $\nu(p)\in \mathbb Z(c_1,...,c_n)\subset \mathbb R$\,. It is clear that $\nu(p+q)\geq \mbox{min}(\nu(p),\nu(q))$\, as power series are added term by term. Similarly, there is a monomial $m_p$\, occurring in the power series of $p$ and a a monomial $m_q$\, in the power series of $q$ such that $\nu(p)=\nu(m_p)$\, and $\nu(q)=\nu(m_q)$\,. Then the monomial $m_pm_q$\, occurs in the power series of $pq$\, and is there the monomial with minimal value. Thus $$\nu(pq)=\nu(m_pm_q)=\nu(m_p)+\nu(m_q)=\nu(p)+\nu(q).$$
 We  can therefore extend $\nu$\, to $k((x_1,...,x_n))$\, by $\nu(\frac{p}{q})=\nu(p)-\nu(q)$\, to get a valuation of $k((x_1,...,x_n))$\,. Restricting to $K(A)\subset k((x_1,...x_n))$\, we get a valuation of $K(A)$\, with center above $A$\,. It has finitely generated value group. If $c_1,...,c_n$\, are linearly independent over $\mathbb Q$\,  we get a zero dimensional Abhyankar place of rank one and rational rank $n$.
\end{example}
 \begin{example}\mylabel{ex:E61} (Flag valuations)
These are the nonnoetherian valuations that are most commonly known to nonspecialists.\\
Let $K/k$\, be a function field and $X$ be a complete model of $K$. Consider a flag of subvarieties
$$X=D_0\supsetneq D_1\supsetneq ...\supsetneq D_n\supsetneq 0$$
such that
\begin{enumerate}[i]
\item each $D_i, i=0,...,n$\, is integral.
\item The local ring $\mathcal O_{D_{i},\eta_{D_{i+1}}}$\, is one dimensional and regular.
\end{enumerate}
It follows that $\mathcal O_{D_i,\eta_{D_{i+1}}}$\, is a discrete algebraic rank one valuation  ring with corresponding valuation $\nu_i$\, in the function field $k(D_i)$ for all $i=1,....,n$\,.\,
We can form the successive composition $\nu:=\nu_0\circ \nu_1\circ ...\circ \nu_{n-1}$\, and get a valuation of the function field $K(X)$\,. Since under composition the rational rank and the rank are additive  (see \cite{Val}[chapter 1.2, Corollary to Proposition 1.11, p.9]), we have $rr(\nu)=n$\, and $r(\nu)=n$\, and we have got an Abhyankar place in $R^{n,n}_{Ab}(K(X\ \mathbb C))$\,. The value group $\Gamma_{\nu}$\, is isomorphic to $\mathbb Z^n$\, but in a noncanonical way! Such an isomorphism depends on the choices of local parameters for the discrete algebraic valuation rings in the function fields $k(D_i)$\,.
\end{example}
 \begin{example}\mylabel{ex:E62}(Mixed valuations) In example (2) we might as well take an incomplete flag 
$$D_0=X\supsetneq D_1\supsetneq ...\supsetneq D_k$$
 in order to construct an Abhyankar place $\nu\in R^{k,k}_{Ab}(K(X)/\mathbb C)$\,. It has dimension $\dim(\nu)=\mbox{trdeg}(\kappa(\nu)/k)=n-k$\,. We can take an arbitrary monomial valuation $\nu_0$\, of the function field $\kappa(\nu)$\, (example (1)). and form the composed valuation $\mu:=\nu\circ \nu_0$\,.\\ By the additivity of rank and rational rank (see \cite{Val}[chapter 1.2, Corollary to Proposition 1.11, p.9]), we then get an Abhyankar place in $R^{k+1,n}_{Ab}(K(X)/\mathbb C)$\,.
\end{example}
\section{Fields of generalized Laurent series}
 Recall that for an arbitrary totally ordered abelian group $\Gamma$\, and any base field $k$, one may construct the field of generalized Laurent series $k((\Gamma))$\, with respect to $\Gamma$\,. Its elements are formal possibly infinite linear combinations $f=\sum_{\gamma\in \Gamma}a_{\gamma}z^{\gamma}$\, with $a_{\gamma}\in k$\, such that the support of $f$, being the subset $S_f\subset \Gamma$\, of all $\gamma\in \Gamma$\, such that $a_{\gamma}\neq 0$\,, is a well ordered set with respect to the ordering on $S_f$\, induced by the given one on $\Gamma$\,. This is equivalent to the fact that the totally ordered set $S_f$\, satisfies the descending chain condition or is free of lower culmination points with respect to the order topology on $\Gamma$\,. \\
  In particular, for each $f\in k((\Gamma))$, there exists a smallest $\gamma_0\in \Gamma$\, for which $a_{\gamma}\neq 0$\,. Addition is defined component wise and multiplication is the usual multiplication of power series under the multiplication rule $z^{\gamma}\cdot z^{\delta}=z^{\gamma+\delta}$\, for $\gamma,\delta\in \Gamma$\,.\\
   The condition on the sets $S_f$\, to be well ordered guarantees that the expressions defining the coefficients of a product, are finite sums of elements of $k$\,. \\
   If $\Gamma =\mathbb Z$\,, then $k((\Gamma))\cong k((x))$\,, the usual field of Laurent series.\\
 It is straight forward to check that a power series $f=\sum_{\gamma\geq 0}a_{\gamma}z^{\gamma}$\, with $a_0\neq 0$\, is invertible and therefrom one deduces that $k((\Gamma))$\, is indeed a field. There is a natural valuation $\nu$\, on $k((\Gamma))$\, with value group $\Gamma$\, defined by $$\nu(f)=\min_{a_{\gamma}\neq 0,\gamma\in \Gamma}\gamma .$$ The subring 
    $$k[[\Gamma]]:=\{f\in k((\Gamma))\mid a_{\gamma}=0\ \mbox{for}\ \gamma<0\}$$ is the corresponding valuation ring which is called the ring of generalized power series and 
    $$\mathfrak{m}=\{f\in k[[\Gamma]]\mid a_0=0\}$$ the valuation ideal.\\
 Now suppose that we are given an order preserving exact sequence of totally ordered abelian groups $$0\longrightarrow \Gamma'\longrightarrow \Gamma\longrightarrow \Gamma''\longrightarrow 0,$$
 such that $\Gamma'$\, is a convex subgroup of $\Gamma$\, and $\Gamma''$\, gets the induced quotient total order. We consider the valuation ring $k[[\Gamma]]$ with canonical valuation $\mu$\, such that $\Gamma=\Gamma_{\mu}$. By general valuation theory, there is a prime ideal $p_{\Gamma'}$\, in $k[[\Gamma]]$\, corresponding to the convex subgroup $\Gamma'$\,. It consists of all $f\in k[[\Gamma]]$\, such that $\nu(f)\notin \Gamma'$\,, i.e., that the smallest nonzero coefficient $a_{\gamma_0}$\, of $f$  has $\gamma_0\notin \Gamma'$\,.\\ It is  generated as an ideal in $k[[\Gamma]]$\, by the set\, $\{z^{\gamma}\mid  \gamma\notin \Gamma'\}$\,.\\
  Again by general valuation theory, the residue ring $k[[\Gamma]]/p_{\Gamma'}$\, is a valuation ring corresponding to a valuation $\nu_1$\, with value group $\Gamma'=\Gamma_{\nu_1}$\,. \\
  \begin{proposition}\mylabel{prop:P60}
  With  notation as above, we have the following statements.
  \begin{enumerate}[1]
  \item $k[[\Gamma]]/p_{\Gamma'}\cong k[[\Gamma']].$\,
  \item $k[[\Gamma]]_{p_{\Gamma'}}\cong k((\Gamma_{\geq 0}\cup \Gamma'))$\, is a valuation ring corresponding to a valuation $\nu$\, with value group $\Gamma''=\Gamma_{\nu}$\,.
  \end{enumerate}
  \end{proposition}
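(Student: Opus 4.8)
The plan is to prove both statements by writing down explicit maps and checking they are isomorphisms, the only recurring subtlety being the systematic use of the convexity of $\Gamma'$ in $\Gamma$. For (1) I would introduce the truncation map $\rho\colon k[[\Gamma]]\to k[[\Gamma']]$ sending $\sum_{\gamma\in\Gamma}a_\gamma z^\gamma$ to $\sum_{\gamma\in\Gamma'}a_\gamma z^\gamma$; it is well defined because a subset of a well ordered set is well ordered, it is clearly $k$-linear, and it is surjective since it restricts to the identity on $k[[\Gamma']]\subseteq k[[\Gamma]]$. The one real point is multiplicativity: for $\gamma\in\Gamma'$ the coefficient of $z^\gamma$ in a product $fg$ is $\sum_{\alpha+\beta=\gamma}a_\alpha b_\beta$ with $\alpha,\beta\ge 0$ (as $f,g\in k[[\Gamma]]$), and from $0\le\alpha\le\gamma$ together with $0,\gamma\in\Gamma'$ convexity forces $\alpha\in\Gamma'$, likewise $\beta\in\Gamma'$, so this coefficient depends only on $\rho(f),\rho(g)$. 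Then I would identify $\ker\rho$ with $p_{\Gamma'}$: if $\nu(f)\notin\Gamma'$ then every $\gamma\in S_f$ satisfies $0\le\nu(f)\le\gamma$, so $\gamma\in\Gamma'$ would drag $\nu(f)$ into $\Gamma'$ by convexity, whence $S_f\cap\Gamma'=\emptyset$ and $f\in\ker\rho$; conversely if $\nu(f)\in\Gamma'$ then $\nu(f)\in S_f\cap\Gamma'$ so $f\notin\ker\rho$. Now (1) is the first isomorphism theorem.

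For (2) I would first note that $\Gamma_{\ge 0}\cup\Gamma'$ is a subsemigroup of $\Gamma$ containing $0$ — if $\delta\ge 0$, $\gamma\in\Gamma'$ and $\delta+\gamma<0$ then $\gamma\le\delta+\gamma<0$ forces $\delta+\gamma\in\Gamma'$ — so that $k((\Gamma_{\ge 0}\cup\Gamma'))$, the set of formal series with well ordered support contained in it, is a subring of $k((\Gamma))$. Every element of the localization is $f/g$ with $f\in k[[\Gamma]]$ and $g\in k[[\Gamma]]\setminus p_{\Gamma'}$, i.e.\ $\nu(g)\in\Gamma'\cap\Gamma_{\ge 0}$; writing $g=a_{\nu(g)}z^{\nu(g)}(1+m)$ with $m\in\mathfrak{m}$ and inverting $1+m$ as a power series shows $1/g$, hence $f/g$, has support in $\Gamma_{\ge 0}\cup\Gamma'$ (convexity again), so $k[[\Gamma]]_{p_{\Gamma'}}\subseteq k((\Gamma_{\ge 0}\cup\Gamma'))$. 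For the reverse inclusion, given $h$ in the right-hand ring put $\gamma_0:=\nu(h)$: if $\gamma_0\ge 0$ then $h\in k[[\Gamma]]$, and if $\gamma_0<0$ then $\gamma_0\in\Gamma'$, so $z^{-\gamma_0}\in k[[\Gamma]]\setminus p_{\Gamma'}$ while $z^{-\gamma_0}h\in k[[\Gamma]]$, whence $h=(z^{-\gamma_0}h)/z^{-\gamma_0}\in k[[\Gamma]]_{p_{\Gamma'}}$. This gives the ring isomorphism. That it is a valuation ring is then automatic (a localization of a valuation ring at a prime is a valuation ring of the same fraction field $k((\Gamma))$); to read off the value group I would check that the units of $R:=k((\Gamma_{\ge 0}\cup\Gamma'))$ are exactly the $f\in R$ with $\nu(f)\in\Gamma'$ — the same factorization-plus-convexity argument applied to $f$ and $f^{-1}$ — so that $K^*/R^*\cong\Gamma/\Gamma'=\Gamma''$ as ordered groups.

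The work is entirely in the bookkeeping of supports: at each step one must check the series produced still has well ordered support (Neumann's lemma for products) and lands in the prescribed subsemigroup, and the single idea used over and over is that convexity of $\Gamma'$ propagates membership from a sum $\alpha+\beta$, or from an endpoint of an interval, to the element in between. I anticipate no genuine obstacle; if desired, (2) can be abridged by invoking the standard bijection between primes of a valuation ring and convex subgroups of its value group, which makes $k[[\Gamma]]_{p_{\Gamma'}}$ a valuation ring with value group $\Gamma/\Gamma'$ for free, leaving only the concrete identification with $k((\Gamma_{\ge 0}\cup\Gamma'))$ to be carried out as above.
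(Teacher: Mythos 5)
Your proposal is correct and follows essentially the same route as the paper: the truncation homomorphism with kernel $p_{\Gamma'}$ for (1), and the identification of the localization with the series supported in $\Gamma_{\geq 0}\cup\Gamma'$ for (2), with the value group read off from the units. You merely fill in the details the paper leaves to the reader (multiplicativity of the truncation via convexity, the subsemigroup check, and the two inclusions for the localization), which is all to the good.
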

  \begin{proof}
  \begin{enumerate}[1]
  \item
   Indeed, writing $$f=\sum_{\gamma\in \Gamma}a_{\gamma}z^{\gamma}\in k[[\Gamma]]$$ and deleting all terms $a_{\gamma}z^{\gamma}$\, with $\gamma\notin \Gamma'$\, we get a power series $$p(f)=\sum_{\gamma\in \Gamma'}a_{\gamma}z^{\gamma}\in k[[\Gamma']].$$
    One checks that this is a ring homomorphism as $\Gamma'$\, is convex and the kernel of $p$ is the ideal generated by $\{z^{\gamma}\mid \gamma\notin \Gamma'\}$\,, which is precisely $p_{\Gamma'}$ .\\
    \item  Furthermore, by general valuation theory, the localization $k[[\Gamma]]_{p_{\Gamma'}}$\, is again a valuation ring with value group $\Gamma''$\,. The localization is with respect to the multiplicatively closed subset of all power series having smallest nonzero coefficient in $\Gamma'$\, and as every power series with smallest nonzero coefficient $\gamma=0$\, is invertible, this is the subring $k((\Gamma_{\geq 0}\cup \Gamma'))$\, consisting of all Laurent series with smallest coefficient in the subset $\Gamma_{\geq 0}\cup \Gamma'$\,, $\Gamma_{\geq 0}$\, denoting the semigroup of nonnegative elements of $\Gamma$\,.\\
      The valuation ideal $\mathfrak{m}_{\Gamma''}$\, corresponds to all Laurant series with lowest coefficient in $\Gamma_{\geq 0}\backslash \Gamma'_{\geq 0}$\,. Then as $\Gamma_{\nu_{\Gamma''}}=k((\Gamma))^*/A_{\nu_{\Gamma'}}^*$\,, it is clear that this value group is isomorphic to $\Gamma''$\,. The value $$\nu_{\gamma''}(f=\sum_{\gamma\in \Gamma}a_{\gamma}z^{\gamma})$$ is then  equal to $p(\nu(f))=p(\gamma_0)$\,.\\
\end{enumerate}
 The situation is described in the following commutative diagrams.
 
 \[\begin{array}{ccc}
 A_{\mu} & \hookrightarrow & k[[\Gamma_{\mu}]] \\
 \Big\downarrow & &\Big\downarrow  \\   
 k(\mu) & \hookrightarrow & k[[\Gamma_{\nu_1}]]
 
 \end{array}\]
 \\
 \[\begin{array}{ccc}
 A_{\mu} & \hookrightarrow & k[[\Gamma_{\mu}]]\\
 \Big\downarrow & & \Big \downarrow \\
 A_{\nu} & \hookrightarrow & k[[\Gamma_{\mu}]]_{p_{\Gamma_{\nu_1}}}.
 \end{array}\]
 \end{proof}
 \subsection{K\"ahlerdifferentials for generalized power series rings}
Let $\Gamma$ be a totally ordered abelian group and as above $\mathbb C((\Gamma))$ be the generalized Laurent series field consisting of all formal Laurent series $\sum_{\gamma\in  \Gamma}a_{\gamma}z^{\gamma}$\, such that the set of all $\gamma$\, such that $a_{\gamma}\neq 0$\, is  well ordered with respect to the ordering of $\Gamma$\,. We want to define in a convenient way the topological K\"ahler differentials $\Omega^1_{top}(\mathbb C[[\Gamma]]/\mathbb C)$\,  with respect to the natural valuation topology of $\mathbb C[[\Gamma]]$\,. There are several ways to do this. One way is to consider the valuation topology as a generalized preadic topology (see \cite{Promotion}[chapter 5.2, pp. 13-39]) and then use the general theory developed  there in order to define the complete topological K\"ahler differentials (see \cite{Promotion}[chapter 6.3, pp.86-96]). We here prefer to use an adhoc definition in order to get what we want, namely that this module is finitely generated over $\mathbb C[[\Gamma]]$\, and that we are allowed to formally differentiate generalized power series. If we fix an isomorphism $\Gamma\cong \mathbb Z^n$\, with basis $e_1,...,e_n$, $\mathbb Z^n$\, with the induced order, we then can write each element $c\in \mathbb C[[\Gamma]]$\, as 
$$c=\sum_{\underline{m}\in \mathbb N_0^n}a_{\underline{m}}\cdot \underline{x}^{\underline{m}},$$ where we use multiindex notation $\underline{x}^{\underline{m}}=x_1^{m_1}\cdot x_2^{m_2}\cdot ...\cdot x_n^{m_n}$. In the usual power series expansion $\sum_{\gamma\in \Gamma}a_{\gamma}z^{\gamma}$\, write $\gamma =\sum_in_i\cdot e_i$\, and put $z^{e_i}=x_i$. Then, we want to get the usual rules for working with differentials
$$d^1\sum_{\underline{m}\in \mathbb N^n}a_{\underline{m}}\cdot \underline{x}^{\underline{m}}=\sum_{i=1}^n\partial^1/\partial x_i(c)\cdot d^1x_i,$$ where we use formal differentiation. Observe that we do not get the classical power series ring in $n$ variables. Note that for each $i$,
$$\partial^1/\partial^1x_i(c)=\sum_{\underline{m}\in \mathbb Z^n}m_i\cdot a_{\underline{m}}\cdot \underline{x}^{\underline{m}-1^i}$$
is again in $\mathbb C((\Gamma))$\,, where the notation $\underline{m}-1^i$\, shall indicate that we have subtracted one at the $i^{th}$\, place. For, if $S=\{m^j_i\cdot a_{\underline{m^j}}\}$\, is a subset of indices that are nonzero, corresponding to the set of exponents $\{\underline{m^j}-1^i\}$\,, then the coefficients in $c$ in front of $\underline{x}^{\underline{m^j}}, j\in S$\, are nonzero, and thus there is an index $j_0$\, such that $\underline{m^{j_0}}$\, is the least element. But then, $\underline{m^{j_0}}-1^i$\, is the smallest element of $S$. Thus, the set of all $\gamma$\, such that the corresponding coefficient in the power (Laurent-) series of $\partial^1/\partial^1x_i(c)$\, is nonzero is well ordered and  the formally differentiated power (Laurent-) series belongs to $\mathbb C((\Gamma))$\,.\\
\begin{definition}\mylabel{def:D10} Let $\Gamma$\, be a totally ordered abelian group together with a fixed isomorphism 
$$j:\Gamma\cong \oplus_{i=1}^n\mathbb K\cdot e_i,$$
with either $\mathbb K=\mathbb Z$\, or $\mathbb K=\mathbb Q$\,.
 We consider the $\mathbb C$-algebra $\mathbb C[[\Gamma]]$\, of generalized power series with exponents in $\Gamma$\, or its quotient field, the field of generalized Laurent series, $\mathbb C((\Gamma))$\,. Putting $z_i=z^{e_i}$\,, under the above isomorphism, we can write each generalized power- (Laurent)-series $f(z)=\sum_{\gamma\in \Gamma}a_{\gamma}z^{\gamma}$\, as 
$$f(z_1,...,z_n)=\sum_{\underline{n}\in \mathbb K^n}a_{\underline{n}}\underline{z}^{\underline{n}},$$ where we use multiindex notation.Then we  can formally differentiate a generalized power series.\\ We put 
\begin{gather*}\Omega^1_{top}(\mathbb C[[\Gamma]]/\mathbb C) = C[[\Gamma]]\otimes_{\mathbb C}\mathbb C[[\Gamma]]/M_{FD,j},\,\,\mbox{resp},\\
 \Omega^1_{top}(\mathbb C((\Gamma))/\mathbb C) =\mathbb C((\Gamma))\otimes_{\mathbb C}\mathbb C((\Gamma))/M'_{FD,j},
 \end{gather*}
where we regard $\Omega^1_{top}$\, as a $\mathbb C[[\Gamma]]$-module (resp. as a $\mathbb C((\Gamma))$-vector space) via the first tensor factor and where $M_{FD}$\,, resp. $M'_{FD}$\, is the $\mathbb C[[\Gamma]]$-submodule, (resp. $\mathbb C((\Gamma))$-subvector space) generated by the expressions 
$$\langle 1\otimes f-(\sum_{i=1}^n\partial^1/\partial^1z_i(f)\otimes z_i)\mid f\in \mathbb C((\Gamma))\rangle.$$
We put $d^1f:=\overline{1\otimes f} $,\,where the bars denote residues module $M_{FD,j}$, resp. $M'_{FD,j}$\, such that the above  expressions just read as the usual  transformation rules for usual K\"ahler differentials of formal or convergent power series.
\end{definition}
\begin{remark} Of course, the basic example we have in mind is where either $\Gamma=\Gamma_{\nu}$\, with $\nu\in R_{Ab}(K(X)/\mathbb C)$\, is an Abhyankar place of dimension zero of an algebraic function field $K(X)/\mathbb C$\, or $\Gamma=\Gamma_{\overline{\nu}}$\, where $\overline{\nu}\in R_{Ab}(\overline{K(X)}/\mathbb C)$\, is an Abhyankar place of dimension zero on the algebraic closure of an algebraic function field over $\mathbb C$\,. In the first case, we have $\Gamma_{\nu}\cong \mathbb Z^n$\, and in the second case we have $\Gamma_{\overline{\nu}}\cong \mathbb Q^n$\,. Since we want to work with $\mathbb Q$-basis for $\Gamma_{\nu}$\, in $K(X)$\,, we pass to the algebraic closure $\overline{K(X)}$\, and an extension $\overline{\nu}$\, of $\nu$\, to $\overline{K(X)}$\, and the $\mathbb Q$-basis for $\nu$\, then gives an isomorphism $\Gamma_{\overline{\nu}}\cong \mathbb Q^n$\,.
\end{remark}
We prove the following easy
\begin{lemma}\mylabel{lem:L2050} With notations as above, the module $M_{FD,j}$\,, resp. the vector space $M'_{FD,j}$\, contains the submodule (subvector space) $M_L, M'_L$\, generated by  all "Leibnitz-relations" 
 $$\langle 1\otimes (f\cdot g)-f\otimes g-g\otimes f\mid f,g\in \mathbb C((\Gamma))\rangle,f,g\in \mathbb C[[\Gamma]],\,\, \mbox{resp.},\,\, \in \mathbb C((\Gamma))$$  such that we have  canonical surjections
 \begin{gather*}\Omega^1(\mathbb C[[\Gamma]]/\mathbb C)\twoheadrightarrow \Omega^1_{top}(\mathbb C[[\Gamma]]/\mathbb C)\,\,\mbox{and}\\
 \Omega^1(\mathbb C((\Gamma))/\mathbb C)\twoheadrightarrow \Omega^1_{top}(\mathbb C((\Gamma))/\mathbb C).
 \end{gather*}
 \end{lemma}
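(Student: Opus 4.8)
The plan is to show that each generator of $M_L$ (resp. $M'_L$) actually lies in $M_{FD,j}$ (resp. $M'_{FD,j}$), which immediately gives the claimed surjections since $\Omega^1(\mathbb C[[\Gamma]]/\mathbb C)$ is by definition the tensor square modulo the submodule generated by the Leibniz relations and modulo the $\mathbb C$-linearity relations (the latter being visibly contained in $M_{FD,j}$ as well, since $\partial^1/\partial^1 z_i$ is $\mathbb C$-linear). So the core is a single computation: fix $f,g\in\mathbb C[[\Gamma]]$ (resp. $\in\mathbb C((\Gamma))$) and verify that
$$
1\otimes(f\cdot g)-f\otimes g-g\otimes f \in M_{FD,j}.
$$
First I would reduce modulo $M_{FD,j}$ and write everything in terms of the formal differential $d^1$: by definition $d^1 h=\overline{1\otimes h}=\sum_{i=1}^n\overline{\partial^1/\partial^1 z_i(h)\otimes z_i}$ for any $h$. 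Applying this to $h=fg$, to $f$, and to $g$, the claim becomes the assertion that in the quotient
$$
d^1(fg)=f\,d^1g+g\,d^1f,
$$
i.e. $\sum_i \partial^1/\partial^1 z_i(fg)\otimes z_i \equiv \sum_i\bigl(f\cdot\partial^1/\partial^1 z_i(g)+g\cdot\partial^1/\partial^1 z_i(f)\bigr)\otimes z_i$ modulo $M_{FD,j}$. Since the right-hand side already has all its tensor-right entries equal to the $z_i$, and $M_{FD,j}$ is generated over $\mathbb C[[\Gamma]]$ by elements of the form $1\otimes(\text{anything}) - \sum_i(\cdots)\otimes z_i$, it suffices to check the termwise Leibniz identity for the formal partial derivatives:
$$
\partial^1/\partial^1 z_i(f\cdot g)=f\cdot\partial^1/\partial^1 z_i(g)+g\cdot\partial^1/\partial^1 z_i(f)\quad\text{for each }i=1,\dots,n.
$$

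This last identity is the genuinely routine part: it is the usual verification that termwise differentiation of generalized power (Laurent) series satisfies the product rule, which reduces to the monomial case $\partial^1/\partial^1 z_i(\underline z^{\underline m}\cdot \underline z^{\underline n})=\partial^1/\partial^1 z_i(\underline z^{\underline m+\underline n})$ together with bilinearity and the fact — already established in the discussion preceding \prettyref{def:D10} — that all the relevant series (the products, and their formal derivatives) again have well-ordered support, so that rearranging and collecting coefficients is legitimate. I would remark that the well-ordering is exactly what licenses interchanging the (possibly infinite) sums defining $fg$ with the application of $\partial^1/\partial^1 z_i$, since for any fixed exponent $\underline\ell$ only finitely many pairs $(\underline m,\underline n)$ with $\underline m+\underline n=\underline\ell$ contribute — this is the Cauchy-product finiteness noted when $\mathbb C((\Gamma))$ was constructed.

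I do not expect a serious obstacle here; the only point requiring minor care is making sure the reduction is carried out over the correct coefficient ring, i.e. that $M_{FD,j}$ as defined is the $\mathbb C[[\Gamma]]$-submodule generated by $1\otimes f-\sum_i\partial^1/\partial^1 z_i(f)\otimes z_i$ with $f$ ranging over $\mathbb C((\Gamma))$ — so that multiplying such a generator by $f$ (resp. $g$) on the left keeps it inside $M_{FD,j}$, which is what lets us pass from the termwise product rule back to the tensor-level Leibniz relation. Once that is in place, the surjections $\Omega^1(\mathbb C[[\Gamma]]/\mathbb C)\twoheadrightarrow\Omega^1_{top}(\mathbb C[[\Gamma]]/\mathbb C)$ and $\Omega^1(\mathbb C((\Gamma))/\mathbb C)\twoheadrightarrow\Omega^1_{top}(\mathbb C((\Gamma))/\mathbb C)$ are the canonical maps induced by the inclusions $M_L\subseteq M_{FD,j}$ and $M'_L\subseteq M'_{FD,j}$, and the proof is complete.
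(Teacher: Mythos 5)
Your proposal is correct and follows essentially the same route as the paper: the paper also writes the three relations $1\otimes(fg)-\sum_i\partial^1/\partial^1z_i(fg)\otimes z_i$, $f\otimes g-\sum_i f\cdot\partial^1/\partial^1z_i(g)\otimes z_i$, $g\otimes f-\sum_i g\cdot\partial^1/\partial^1z_i(f)\otimes z_i$ as elements of $M_{FD,j}$ (using the $\mathbb C[[\Gamma]]$-module structure, exactly the point you flag) and subtracts, reducing everything to the termwise product rule for the formal partial derivatives. Your added justification of that product rule via well-ordered supports and Cauchy-product finiteness is a detail the paper leaves implicit, but it is the right one.
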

 \begin{proof} We treat the case of power series, the case of Laurent series is identical.\\
 We have 
 \begin{gather*} 1\otimes (f\cdot g)-\sum_{i=1}^n\partial^1/\partial^1z_i(f\cdot g)\otimes z_i\in M_{FD,j};\,\, (1)\\
 f\otimes g-\sum_{i=1}^nf\cdot \partial^1/\partial^1z_i(g)\otimes z_i\in M_{FD,j};\,\, (2)\\
 g\otimes f-\sum_{i=1}^ng\cdot \partial^1/\partial^1z_i(f)\otimes z_i\in M_{FD,j}\,\, (3).
 \end{gather*}
 If for each $i=1,...,n$\, we view $f,g$ as formal power series in the vairable $x_i$\,, then the usual rule for differentiation of a product gives
 $$\partial^1/\partial z_i(f\cdot g)=f\cdot \partial^1/\partial^1z_i(g)+g\cdot \partial^1/\partial^1z_i(f), i=1,...,n.\,\, (*_i)$$
 Now subtracting $(2)$ and $(3)$ from $(1)$\,, we get
 \begin{gather*}1\otimes (f\cdot g)-f\otimes g-g\otimes f=\\
 \sum_{i=1}^n(\partial^1/\partial^1z_i(f\cdot g)-f\cdot \partial^1/\partial^1z_i(g)-g\cdot \partial^1/\partial^1z_i(f))\otimes z_i\,\,\,\mbox{mod}(M_{FD,j}).
 \end{gather*} But the sum on the right hand side is zero because of $(*_i), i=1,...,n$.
 \end{proof}
 We have the following 
 \begin{lemma}\mylabel{lem:L47}With notations as above and fixing the isomorphism $j$, there are canonical isomorphisms
 \begin{gather*}\Omega^1_{top}(\mathbb C[[\Gamma]]/\mathbb C)\cong \bigoplus_{i=1}^n\mathbb C[[\Gamma]]\cdot d^1z_i\,\,\mbox{and}\\
 \Omega^1_{top}(\mathbb C((\Gamma))/\mathbb C)\cong \bigoplus_{i=1}^n\mathbb C((\Gamma))\cdot d^1z_i.
 \end{gather*}
 \end{lemma}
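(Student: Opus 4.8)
The plan is to set up an explicit two-sided comparison between $\Omega^1_{top}(\mathbb C[[\Gamma]]/\mathbb C)$ and the free module $F:=\bigoplus_{i=1}^n\mathbb C[[\Gamma]]\cdot d^1z_i$ on the symbols $d^1z_1,\dots,d^1z_n$, and then run the identical argument with $\mathbb C((\Gamma))$ in place of $\mathbb C[[\Gamma]]$ to get the Laurent-series statement. There is an evident $\mathbb C[[\Gamma]]$-linear map $\phi\colon F\to\Omega^1_{top}(\mathbb C[[\Gamma]]/\mathbb C)$ sending the $i$-th basis vector to $d^1z_i$. First I would check that $\phi$ is surjective: every class in $\Omega^1_{top}$ is represented by a finite sum $\sum_k a_k\otimes b_k$, which, since the module structure is taken through the first tensor factor, equals $\sum_k a_k\,d^1b_k$; but the generators of $M_{FD,j}$ say exactly that $d^1b=\sum_{i=1}^n(\partial^1/\partial^1z_i)(b)\cdot d^1z_i$ in $\Omega^1_{top}$, so the $d^1z_i$ generate $\Omega^1_{top}$ over $\mathbb C[[\Gamma]]$.

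Next I would construct a left inverse to $\phi$. Start from the map $\mathbb C[[\Gamma]]\times\mathbb C[[\Gamma]]\to F$, $(a,b)\mapsto\sum_{i=1}^n a\cdot(\partial^1/\partial^1z_i)(b)\cdot d^1z_i$. This is well defined because, as shown in the paragraph before \prettyref{def:D10}, each formal partial derivative $\partial^1/\partial^1z_i$ is a $\mathbb C$-linear self-map of $\mathbb C[[\Gamma]]$ (the termwise differentiated series again has well-ordered support), so the assignment is $\mathbb C$-bilinear and descends to a $\mathbb C[[\Gamma]]$-linear map $\psi\colon\mathbb C[[\Gamma]]\otimes_{\mathbb C}\mathbb C[[\Gamma]]\to F$. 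I would then verify that $\psi$ annihilates $M_{FD,j}$: on the generator $1\otimes f-\sum_i(\partial^1/\partial^1z_i)(f)\otimes z_i$ it outputs $\sum_i(\partial^1/\partial^1z_i)(f)\,d^1z_i-\sum_{i,j}(\partial^1/\partial^1z_i)(f)\,(\partial^1/\partial^1z_j)(z_i)\,d^1z_j$, and since $(\partial^1/\partial^1z_j)(z_i)=\delta_{ij}$ in the multiindex calculus fixed by $j$, the two sums cancel. Hence $\psi$ factors through a $\mathbb C[[\Gamma]]$-linear map $\overline\psi\colon\Omega^1_{top}(\mathbb C[[\Gamma]]/\mathbb C)\to F$.

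Finally I would compute $\overline\psi\circ\phi$ on the basis of $F$: the $i$-th basis vector goes to $d^1z_i=\overline{1\otimes z_i}$ and then to $\sum_j(\partial^1/\partial^1z_j)(z_i)\,d^1z_j=d^1z_i$, so $\overline\psi\circ\phi=\id_F$. Thus $\phi$ is injective as well as surjective, hence an isomorphism, and the $d^1z_i$ form a $\mathbb C[[\Gamma]]$-basis of $\Omega^1_{top}(\mathbb C[[\Gamma]]/\mathbb C)$. Repeating the argument verbatim with $\mathbb C[[\Gamma]]$ replaced by $\mathbb C((\Gamma))$, "$\mathbb C[[\Gamma]]$-module" by "$\mathbb C((\Gamma))$-vector space" and $M_{FD,j}$ by $M'_{FD,j}$ — using that $\partial^1/\partial^1z_i$ extends to $\mathbb C((\Gamma))$ via the quotient rule — yields the second isomorphism. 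The only point that is not purely formal is the well-definedness of $\psi$, i.e. that termwise differentiation genuinely sends generalized (power or Laurent) series back into the same ring; but that is precisely the well-ordered-support computation already carried out before \prettyref{def:D10}, so I do not expect a real obstacle.
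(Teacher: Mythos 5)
Your proposal is correct and follows essentially the same route as the paper: the same map $\phi$, the same $\psi$ built from formal partial derivatives, the same check that $\psi$ kills the generators of $M_{FD,j}$, and the same verification that the $d^1z_i$ generate. The only cosmetic difference is that you establish surjectivity of $\phi$ directly and then use $\overline{\psi}\circ\phi=\id$ for injectivity, whereas the paper computes both composites $\overline{\psi}\circ\phi$ and $\phi\circ\overline{\psi}$; these are logically equivalent.
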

 \begin{proof} We give the proof in the case of formal generalized power series, the case of Laurent series being identical.
 We write down two homomorphisms of $\mathbb C[[\Gamma]]$-modules
 \begin{gather*}\phi: \bigoplus_{i=1}^n\mathbb C[[\Gamma]]\cdot d^1z_i\longrightarrow \Omega^1_{top}(\mathbb C[[\Gamma]]/\mathbb C)\\
 \sum_{i=1}^nf_i\cdot d^1z_i\mapsto \overline{\sum_{i=1}^nf_i\otimes z_i}\,\,\mbox{and}\\
 \psi: \mathbb C[[\Gamma]]\otimes_{\mathbb C}\mathbb C[[\Gamma]]\longrightarrow \bigoplus_{i=1}^n\mathbb C[[\Gamma]]\cdot d^1z_i;\\
 \sum_jf_j\otimes g_j\mapsto\sum_jf_j\cdot \sum_{i=1}^n\partial^1/\partial^1z_i(g_j)\cdot d^1z_i=\\
 \sum_{i=1}^n(\sum_{j=1}^nf_j\cdot \partial^1/\partial^1z_i(g_j))\cdot d^1z_i.
 \end{gather*}
 We want to show that the homomorphism $\psi$\, factors through $\Omega^1_{top}(\mathbb C[[\Gamma]]/\mathbb C)$\,, i.e., we have to show that each generator of the submodule $M_{FD,j}$\,  maps to zero. So take a generator 
 $$1\otimes f-\sum_{i=1}^n\partial^1/\partial^1z_i(f)\otimes z_i.$$ Obviously, 
 \begin{gather*}\psi(1\otimes f)=\sum_{i=1}^n\partial^1/\partial^1z_i(f)\otimes d^1z_i;\\
 \psi(1\otimes z_i)=1\otimes z_i
 \end{gather*}
 and thus, by linearity
 \begin{gather*}\psi(\sum_{i=1}^n\partial^1/\partial^1z_i(f)\otimes z_i)=\sum_{i=1}^n\partial^1/\partial^1z_i(f)\cdot \psi(1\otimes z_i)\\
 =\sum_{j=1}^n\partial^1/\partial^1z_i(f)\cdot 1\otimes z_i
 \end{gather*} and we get
 $$\psi(1\otimes f-\sum_{i=1}^n\partial^1/\partial^1z_i(f)\otimes z_i)=0,$$
  as was to be shown. Hence we get a homomorphism of $\mathbb C[[\Gamma]]$-modules
  $$\overline{\psi}: \Omega^1_{top}(\mathbb C[[\Gamma]]/\mathbb C)\longrightarrow \bigoplus_{i=1}^n\mathbb C[[\Gamma]]\cdot d^1z_i.$$
  We calculate the effect of both compositions $\overline{\psi}\circ \phi$\, and $\phi\circ \overline{\psi}$\, on generating elements:
  \begin{gather*}\overline{\psi}\circ \phi(d^1z_i)=\overline{\psi}(\overline{1\otimes z_i})\\
  =\psi(1\otimes z_i)=d^1z_i\,\,\mbox{and}\\
  \phi\circ \overline{\psi}(\overline{f\otimes g})=\phi(f\cdot \sum_{i=1}^n\partial^1/\partial^1z_i(g)\cdot d^1z_i)=\sum_{i=1}^n\overline{f\cdot \partial^1/\partial^1z_i(g)\otimes z_i}\\
  =\overline{f\otimes g}
  \end{gather*}
  the last equality because of the definition of the module $M_{FD,j}$. 
  So the required isomorphism is proven.
 \end{proof}
 \begin{remark} For each choice of $\mathbb K$-basis $x_1,...,x_n$\,, we get an order isomorphism $(\Gamma,<)\cong (\mathbb K^n,<_{\underline{x}}),$\, where the order on $\mathbb K^n$\, is defined by transport of structure. If $y_1,...,y_n$\, is another $\mathbb K$-basis we get another ring of generalized Laurent series $\mathbb C((\mathbb K^n,<_{\underline{y}}))$\, and both rings are canonically isomorphic to $\mathbb C((\Gamma))$\,. One cannot say, that these are the same rings. We can write each $x_i$ as a power series $x_i=\sum_{\gamma}a_{\gamma}^iz^{\gamma}$\, and could try to embed the ring $\mathbb C((\mathbb K^n,<_{\underline{x}}))$\, into $\mathbb C((\Gamma))$\, by forming Laurent series $p(x_1(z),x_2(z),...,x_n(z))\in \mathbb C((\Gamma))$\,. It is not clear at all, how to define these iterated series. First, we cannot use the order topology, since $\sum_{\gamma\in \Gamma}a_{\gamma}z^{\gamma}\in \mathbb C((\Gamma))$\, does not satisfy in general $\lim_{a_{\gamma}\neq 0}\gamma=\infty,$\, there are in general upper culmination points in $\Gamma$\,. So, as a formal sum, it is not clear at all that $p(x_1(z),...,x_n(z))$\, really exists, i.e., that the coefficients of the iterated series are finite sums. In the case of generalized power series with exponents in $\mathbb Z^n$\,, the arguement in \cite{Bochner}[Chapter I, Paragraph 1.1 Calculus of formal power series, pp.3-10] of constructing iterated power series breaks down because there is no appropriate order function on $(\mathbb K^n,<_{\underline{x}})$\,. In general, we do not have $\mathbb K^n_{>_{\underline{x}}0}=\mathbb K_{>0}^n$\,. For these and related questions, see \cite{Hoeven}. Thus, we should denote $\Omega^1_{top}(\mathbb C((\Gamma))/\mathbb C)$\, more correctly as $\Omega^1_{top}(\mathbb C((\mathbb Z,<_{\underline{x}}))/\mathbb C)$\,. A different choice of $\mathbb Z$-basis $y_1,...,y_n$\, then gives an isomorphic module (but not the same module).
 \end{remark} 
 By \prettyref{thm:T2}, there is an embedding of fields over $\mathbb C$\,, $$j_{\nu}:K(X)\hookrightarrow \mathbb C((\Gamma_{\nu})),$$
 such that the natural valuation $\nu: \mathbb C((\Gamma_{\nu}))\longrightarrow \Gamma_{\nu}$\, is an extension of $\nu: K(X)\longrightarrow \Gamma_{\nu}$\,. Let 
$$\overline{\nu}: \overline{K(X)}\longrightarrow \Gamma_{\overline{\nu}}$$
 be an extension of $\nu$\, to the algebraic closure  of $K(X)/\mathbb C$\,. By the same theorem, there is an embedding over $\mathbb C$\,,
$$j_{\overline{\nu}}: \overline{K(X)}\hookrightarrow \mathbb C((\Gamma_{\overline{\nu}}))$$ such that the following diagram is commutative:
\begin{equation*}
\begin{CD} K(X)@>j_{\nu}>> \mathbb C((\Gamma))\\
          @VVjV                  @VV\mathbb C((i)) V\\
         \overline{K(X)} @>j_{\overline{\nu}}>> \mathbb C((\Gamma_{\overline{\nu}})). 
\end{CD}
\end{equation*}
This follows because by \cite{Kaplansky}[ §3, Theorem 5, p.312 and §4, Theorem 8, p.318] the maximal immediate extension of $K(X)$\, is contained in the maximal immediate extension of $\overline{K(X)}$\, and both maximal immediate extensions are isomorphic to the Laurent series rings on the right hand side of the commutative square.
 \begin{lemma}\mylabel{lem:L15} Let $(K(X),\nu)/\mathbb C$\, be a valued function field with value group $\Gamma_{\nu},$\, such that $\nu$ has maximal rational rank $rr(\nu)=\mbox{trdeg}(K(X)/\mathbb C)=n$\,.  Let 
 $$j_{\nu}:(K(X),\nu)\hookrightarrow (\mathbb C((\Gamma_{\nu})),\nu_{can})$$ be an embedding of fields over $\mathbb C$\, that respects the valuations. Then, there is an isomorphism
 $$\beta_{K(X)}:\Omega^1(K(X)/\mathbb C)\otimes_K\mathbb C((\Gamma_{\nu}))\stackrel{\cong}\longrightarrow \Omega^1_{top}(\mathbb C((\Gamma_{\nu}))/\mathbb C).$$
  If $(\overline{K(X)},\overline{\nu})\supset (K(X),\nu)$\, is an extension of $\nu$\, to the algebraic closure,  and there is a commutative diagram of embeddings as above, we get  a commutative diagram of isomorphisms  compatible with the canonical embedding 
  $$\mathbb C((i)):\mathbb C((\Gamma_{\nu}))\hookrightarrow \mathbb C((\Gamma_{\overline{\nu}}))$$ coming from the inclusion of value groups $i:\Gamma_{\nu}\hookrightarrow \Gamma_{\overline{\nu}},$\,
\begin{equation}
\begin{CD}
\Omega^1(K(X)/\mathbb C)\otimes_{K(X)}\mathbb C((\Gamma_{\nu}))@>\beta_{K(X)}>> \Omega^1_{top}(\mathbb C((\Gamma_{\nu}))/\mathbb C))\\
@VVV             @VVV\\
\Omega^1(\overline{K(X)}/\mathbb C)\otimes_{\overline{K(X)}}\mathbb C((\Gamma_{\overline{\nu}}))@>\beta_{\overline{K(X)}}>> \Omega^1_{top}(\mathbb C((\Gamma_{\overline{\nu}}))/\mathbb C).
\end{CD}
\end{equation}
\end{lemma}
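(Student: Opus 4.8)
The plan is to build the map $\beta_{K(X)}$ from the universal property of Kähler differentials and then check it is an isomorphism by exhibiting an explicit inverse, using the concrete description of $\Omega^1_{top}(\mathbb C((\Gamma_\nu))/\mathbb C)$ from \prettyref{lem:L47}. Concretely: fix a $\mathbb Z$-basis $t_1,\dots,t_n$ for $\nu$ (which exists since $\Gamma_\nu\cong\mathbb Z^n$ for an Abhyankar place; in the algebraically closed setting use \prettyref{lem:L1} to get a $\mathbb Q$-basis), so that $\nu(t_i)$ generate $\Gamma_\nu$ and under $j_\nu$ the images of the $t_i$ play the role of the coordinates $z_i$ of Definition \ref{def:D10}. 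The composite $K(X)\xhookrightarrow{j_\nu}\mathbb C((\Gamma_\nu))\xrightarrow{d^1}\Omega^1_{top}(\mathbb C((\Gamma_\nu))/\mathbb C)$ is a $\mathbb C$-derivation (Leibniz holds for $d^1$ by \prettyref{lem:L2050}), hence factors through a $K(X)$-linear map $\Omega^1(K(X)/\mathbb C)\to\Omega^1_{top}(\mathbb C((\Gamma_\nu))/\mathbb C)$, and extending scalars along $j_\nu$ gives $\beta_{K(X)}$.

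First I would reduce the surjectivity–injectivity question to linear algebra over the field $\mathbb C((\Gamma_\nu))$. Both sides are finite-dimensional $\mathbb C((\Gamma_\nu))$-vector spaces: the right side has dimension $n$ by \prettyref{lem:L47} with basis $d^1z_1,\dots,d^1z_n$, and the left side has dimension $\mathrm{trdeg}(K(X)/\mathbb C)=n$ since in characteristic zero a function field is separably generated, so $\Omega^1(K(X)/\mathbb C)$ is free of rank $n$ over $K(X)$. Thus it suffices to prove $\beta_{K(X)}$ is surjective, or equivalently that it carries a basis to a basis. The key computational input is the chain rule in $\mathbb C((\Gamma_\nu))$: if $x_1,\dots,x_n$ is a separating transcendence basis of $K(X)/\mathbb C$, then writing the series $j_\nu(x_j)=\sum_\gamma a_\gamma^j z^\gamma$ and applying formal differentiation termwise, $\beta_{K(X)}(d^1x_j)=\sum_{i=1}^n \partial^1 x_j/\partial^1 z_i\cdot d^1z_i$. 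So $\beta_{K(X)}$ is described by the Jacobian matrix $(\partial^1 x_j/\partial^1 z_i)$ with entries in $\mathbb C((\Gamma_\nu))$, and the claim becomes: this matrix is invertible.

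The main obstacle is precisely proving that Jacobian is invertible. Here is how I would argue it. The valuation $\nu$ on $K(X)$ has rational rank $n$, so the value group $\Gamma_\nu$ is generated up to finite index by $\nu(x_1),\dots,\nu(x_n)$ for a suitable transcendence basis — more carefully, one first chooses the transcendence basis so that $\nu(x_1),\dots,\nu(x_n)$ are $\mathbb Q$-linearly independent in $\Gamma_\nu\otimes\mathbb Q$ (possible because any $n$ elements realizing a $\mathbb Q$-basis of the value group are automatically algebraically independent over $\mathbb C$, by the Abhyankar inequality applied to the subfield they generate). With that choice, the leading term of $j_\nu(x_j)$ is $a^j\,z^{\nu(x_j)}$ with $a^j\in\mathbb C^\times$, and $\partial^1 x_j/\partial^1 z_i$ has leading term a nonzero multiple of $z^{\nu(x_j)-\nu(z_i)}$ coming from that monomial. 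Taking the "leading-form" or "initial" matrix — i.e. reducing modulo higher-order terms — the Jacobian becomes (a unit times) the matrix whose $(j,i)$ entry records the exponent of $t_i$ in the monomial $z^{\nu(x_j)}$; since the $\nu(x_j)$ are independent in $\Gamma_\nu\otimes\mathbb Q$ and the $\nu(t_i)$ form a $\mathbb Q$-basis, the change-of-basis matrix expressing the $\nu(x_j)$ in terms of the $\nu(t_i)$ is invertible over $\mathbb Q$, hence the initial Jacobian is invertible. An invertible initial term forces $\det$ of the full Jacobian to be a unit in $\mathbb C((\Gamma_\nu))$ (its $\nu$-value is finite), so $\beta_{K(X)}$ is an isomorphism.

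Finally, for the compatibility square with the algebraic closure: this is formal. The inclusion $i:\Gamma_\nu\hookrightarrow\Gamma_{\overline\nu}$ induces $\mathbb C((i))$, and the whole construction of $\beta$ from the universal property is natural in the valued field; concretely, $d^1$ commutes with $\mathbb C((i))$ on the level of series (formal differentiation of a series in $\mathbb C((\Gamma_\nu))$, viewed inside $\mathbb C((\Gamma_{\overline\nu}))$ via the same $t_i$, is unchanged), the transcendence basis $x_1,\dots,x_n$ of $K(X)$ remains a transcendence basis of $\overline{K(X)}$, so both $\beta_{K(X)}$ and $\beta_{\overline{K(X)}}$ are given by the same Jacobian matrix read in the two Laurent-series fields, and the square commutes on the generators $d^1x_j\otimes 1$. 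The only point to note is that one must use the commutative diagram of embeddings provided in the hypothesis, so that the chosen $t_i$ and the series expansions $j_\nu(x_j)$ are genuinely compatible with $j_{\overline\nu}$; granting that, commutativity is immediate and the lemma follows.
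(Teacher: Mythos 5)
Your proposal is correct, and it reaches the first (and main) assertion of the lemma by a genuinely different route than the paper. The paper's proof picks a $\mathbb Z$-basis $x_1,\dots,x_n\in K(X)$ for $\nu$, uses the $\nu(x_i)$ to fix the isomorphism $j:\Gamma_\nu\cong\mathbb Z^n$ with respect to which $\Omega^1_{top}$ is formed, and then treats the $x_i$ simultaneously as the monomial coordinates $z_i$ of \prettyref{lem:L47} and as a transcendence basis of $K(X)/\mathbb C$; with that identification the map $\beta_{K(X)}$ is literally $\bigoplus_i j_\nu\cdot d^1x_i$ in the two free presentations and the isomorphism is immediate. This is shorter, but it silently identifies $j_\nu(x_i)$ with the pure monomial $z^{\nu(x_i)}$ (or at least suppresses the Jacobian relating $d^1j_\nu(x_i)$ to the $d^1z_i$). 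You instead allow an arbitrary separating transcendence basis with $\mathbb Q$-independent values, express $\beta_{K(X)}$ by the formal Jacobian $(\partial^1 x_j/\partial^1 z_i)$, and prove invertibility by the initial-term argument: the coefficient of the determinant at the minimal value $\sum_j\nu(x_j)-\sum_i\nu(t_i)$ is $\prod_j a^j\cdot\det(m_{ji})$ with $(m_{ji})$ the rational change-of-basis matrix, hence nonzero --- so no cancellation can kill the lowest term. This costs you a computation but buys independence from any normalization of the Kaplansky embedding, and in effect supplies the step the paper elides. Two small points to tighten: the equivalence ``surjective iff carries a basis to a basis'' should just read that equality of dimensions reduces the claim to nonvanishing of the Jacobian determinant; and your phrase that the $j_\nu(t_i)$ ``play the role of the coordinates $z_i$'' should be avoided --- the coordinates in \prettyref{def:D10} are the monomials $z^{e_i}$, and your Jacobian argument correctly differentiates with respect to those, not with respect to $j_\nu(t_i)$. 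The naturality argument for the square agrees with the paper's, which likewise reduces to $\mathbb C((i))\circ j_\nu=j_{\overline\nu}\circ j$ on typical elements.
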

\begin{proof} As for the first part, fix a $\mathbb Z$-basis $x_1,...,x_n\in K(X)$\, (we know that $\nu$\, is an Abhyankar place). Identifying $x_1,...,x_n$\, with their images in $\mathbb C((\Gamma_{\nu})),$\, we get an isomorphism $j: \Gamma_{\nu}\cong \mathbb Z^n$\,. We take the topological K\"ahler differentials $\Omega^1_{top}(\mathbb C((\Gamma_{\nu}))/\mathbb C)$\, with respect to the isomorphism $j$\,. By \ref{lem:L47}, we get an isomorphism of $\mathbb C((\Gamma_{\nu}))$-vector spaces
$$\phi: \Omega^1_{top}(\mathbb C((\Gamma_{\nu}))/\mathbb C)\stackrel{\cong}\longrightarrow \bigoplus_{i=1}^n\mathbb C((\Gamma_{\nu}))\cdot d^1x_i.$$
Also, the elements $x_1,...,x_n$\, form a transcendence basis for $K(X)/\mathbb C$\, and therefore an $K(X)$-vector spase basis for $\Omega^1(K(X)/\mathbb C)$\,: there is an isomorphism
$$\psi:\Omega^1(K(X)/\mathbb C)\stackrel{\cong}\longrightarrow \bigoplus_{i=1}^nK(X)\cdot d^1x_i.$$  
Identifying the above modules of K\"ahler differentials with this free presentations, the canonical homomorphism
$$\Omega^1(K(X)/\mathbb C)\stackrel{\Omega^1(j_{\nu}/\mathbb C)}\longrightarrow \Omega^1(\mathbb C((\Gamma_{\nu}))/\mathbb C)\twoheadrightarrow \Omega^1_{top}(\mathbb C((\Gamma_{\nu}))/\mathbb C)$$
 is nothing but the homomorphism
$$\bigoplus_{i=1}^nj_{\nu}\cdot d^1x_i: \bigoplus_{i=1}^nK(X)\cdot d^1x_i\longrightarrow \bigoplus_{i=1}^n\mathbb C((\Gamma_{\nu}))\cdot d^1x_i.$$
If we tensor the left hand side over $K(X)$ with $\mathbb C((\Gamma_{\nu}))$\,, we get the required isomorphism.\\
As for the second part, this follows easily from the commutative diagram of embeddings of valued fields
\begin{equation*}
\begin{CD} K(X)@>j_{\nu}>> \mathbb C((\Gamma))\\
          @VVjV                  @VV\mathbb C((i)) V\\
         \overline{K(X)} @>j_{\overline{\nu}}>> \mathbb C((\Gamma_{\overline{\nu}})). 
\end{CD}
\end{equation*}
 For, a typical element of $\Omega^1(K(X)/\mathbb C)\otimes_{K(X)}\mathbb C((\Gamma_{\nu}))$\, is of the form 
$$d^1f_1\otimes p_1+... d^1f_k\otimes p_k\,\, \mbox{with}\,\, f_i\in K(X)\,\, \mbox{and}\,\, p_i\in \mathbb C((\Gamma)), i=1,...,k.$$ 
Via the homorphism $\beta_{K(X)}$\, it is mapped to 
$$p_1\cdot d^1j_{\nu}(f_1)+...+ p_k\cdot d^1j_{\nu}(f_k).$$ The right vertical homomorphism  is $\Omega^1_{top}(\mathbb C((i))/\mathbb C)$\, where $i:\Gamma_{\nu}\hookrightarrow \Gamma_{\overline{\nu}}$\, is the inclusion of value groups. So the image of our typical element is 
$$\mathbb C((i))(p_1)\cdot d^1\mathbb C((i))\circ j_{\nu}(f_1)+...+\mathbb C((i))(p_k)\cdot d^1\mathbb C((i))\circ j_{\nu}(f_k)\,\, (*).$$
The left vertical arrow is $\Omega^1(j/\mathbb C)\otimes \mathbb C((i)),$\, so $d^1f_1\otimes p_1+...+d^1f_k\otimes p_k$\, is mapped to 
$$d^1j(f_1)\otimes \mathbb C((i))(p_1)+...+d^1j(f_k)\otimes \mathbb C((i))(p_k).$$
The image of this element under $\beta_{\overline{K(X)}}$\, is
$$\mathbb C((i))(p_1)\cdot d^1j_{\overline{\nu}}\circ j(f_1)+...+\mathbb C((i))(p_k)\cdot d^1j_{\overline{\nu}}\circ j(f_k)\,\, (**).$$ Comparing the elements $(*)$\, and $(**)$\, everything follows from $\mathbb C((i))\circ j_{\nu}=j_{\overline{\nu}}\circ j.$\,
\end{proof}
\subsection{The valuation formula for fields of generalized Laurent series}
\begin{proposition}\mylabel{prop:P1} Let $\Gamma$ be a totally ordered finite dimensional $\mathbb Q$-vector space such that the generalized power series ring $\mathbb C[[\Gamma]]$\, has the same Krull dimension as $\dim_{\mathbb Q}(\Gamma)$\, and let $\gamma_1,...,\gamma_n$\, be a $\mathbb Q$-basis for $\Gamma$\,  and $y_1,...,y_n\in \mathbb C((\Gamma))$\, be elements with $\nu(y_i)=\gamma_i$\,. Let 
$$\omega=f\cdot d^1y_1\wedge d^1y_2\wedge ...\wedge d^1y_n\in \Lambda^n\Omega^1_{top}(\mathbb C((\Gamma))/\mathbb C),$$
 which is a rank one vector space over $\mathbb C((\Gamma))$\,. If we define a valuation
$$\nu: \Lambda^n\Omega^1_{top}(\mathbb C((\Gamma))/\mathbb C)\longrightarrow \Gamma=\Gamma_{\nu}$$
by $$\nu(\omega)=\nu(f)+\gamma_1+\gamma_2+...+\gamma_n\in \Gamma,$$
 then this is independent of the choosen $\mathbb Q$-basis $(\gamma_1,...,\gamma_n)$\, and independent of the choosen elements $y_1,...,y_n$\,. Then this valuation satisfies the usual rules
$$\nu(\omega_1+\omega_2)\geq \min \{\nu(\omega_1),\nu(\omega_2)\}\,\,\text{and}\,\, \nu(g\cdot \omega)=\nu(g)+ \nu(\omega).$$  
\end{proposition}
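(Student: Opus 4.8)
The plan is to reduce the whole statement to a single determinant computation in $\mathbb{C}((\Gamma))$. Fix the presentation of \prettyref{lem:L47}: if $e_1,\dots,e_n$ is the $\mathbb{Q}$-basis of $\Gamma$ underlying the chosen differential structure and $z_j=z^{e_j}$, then $\Omega^1_{top}(\mathbb{C}((\Gamma))/\mathbb{C})=\bigoplus_{j=1}^n\mathbb{C}((\Gamma))\,d^1z_j$, so $\Lambda^n\Omega^1_{top}$ is one-dimensional with basis $\eta_0:=d^1z_1\wedge\dots\wedge d^1z_n$. For any $y_1,\dots,y_n$ with $\gamma_i:=\nu(y_i)$ a $\mathbb{Q}$-basis of $\Gamma$, the defining relation for $d^1$ in \prettyref{def:D10} gives $d^1y_i=\sum_j(\partial^1y_i/\partial^1z_j)\,d^1z_j$, hence
$$d^1y_1\wedge\dots\wedge d^1y_n=\det\bigl(\partial^1y_i/\partial^1z_j\bigr)_{i,j}\cdot\eta_0 .$$
Thus, writing $\omega=g\cdot\eta_0$ with $g\in\mathbb{C}((\Gamma))$, the coefficient of $\omega$ relative to the $y$-basis is $f=g/\det(\partial^1y_i/\partial^1z_j)$ (meaningful because $(\star)$ below shows this determinant is nonzero), and the whole Proposition will follow once one establishes
$$\nu\Bigl(\det\bigl(\partial^1y_i/\partial^1z_j\bigr)_{i,j}\Bigr)=\sum_{i=1}^n\gamma_i-\sum_{j=1}^ne_j .\qquad(\star)$$

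Granting $(\star)$: for every admissible choice of $\mathbb{Q}$-basis $(\gamma_i)$ and elements $(y_i)$ one computes $\nu(\omega)=\nu(f)+\sum_i\gamma_i=\nu(g)-\nu\bigl(\det(\partial^1y_i/\partial^1z_j)\bigr)+\sum_i\gamma_i=\nu(g)+\sum_je_j$, which is independent of all choices; this disposes of independence of the $\mathbb{Q}$-basis and of the elements $y_i$ simultaneously, and since the determinant in $(\star)$ is nonzero it also confirms that $d^1y_1\wedge\dots\wedge d^1y_n$ really spans the rank-one space. The two valuation properties are then formal: expanding $\omega_1,\omega_2$ relative to the \emph{same} basis $d^1y_1\wedge\dots\wedge d^1y_n$ as $f_1,f_2$ times it, one has $\omega_1+\omega_2=(f_1+f_2)\,d^1y_1\wedge\dots\wedge d^1y_n$, whence $\nu(\omega_1+\omega_2)=\nu(f_1+f_2)+\sum_i\gamma_i\ge\min_k\nu(\omega_k)$, and $\nu(g\omega)=\nu(gf)+\sum_i\gamma_i=\nu(g)+\nu(\omega)$, both because $\nu$ is a valuation on $\mathbb{C}((\Gamma))$.

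To prove $(\star)$ I would write $y_i=z^{\gamma_i}v_i$ with $v_i:=z^{-\gamma_i}y_i$, so that $\nu(v_i)=0$, i.e. $v_i=c_i+(\text{terms of strictly positive value})$ with $c_i\in\mathbb{C}^{*}$. Writing $\gamma_i=\sum_ja_{ij}e_j$, the matrix $A=(a_{ij})$ is invertible over $\mathbb{Q}$, being the change-of-basis matrix between the $\mathbb{Q}$-bases $\{\gamma_i\}$ and $\{e_j\}$ of $\Gamma$. By the monomial rule of \prettyref{def:D10} we have $\partial^1z^{\gamma_i}/\partial^1z_j=a_{ij}z_j^{-1}z^{\gamma_i}$, and the product rule for formal differentiation (relation $(\ast_i)$ in the proof of \prettyref{lem:L2050}) then gives
$$\partial^1y_i/\partial^1z_j=z^{\gamma_i-e_j}\bigl(a_{ij}v_i+z_j\cdot\partial^1v_i/\partial^1z_j\bigr) .$$
The crucial point is the elementary estimate $\nu(\partial^1v_i/\partial^1z_j)>-e_j$: every exponent occurring in $\partial^1v_i/\partial^1z_j$ is of the form $\delta-e_j$ with $\delta$ a \emph{strictly} positive exponent of $v_i$ (the constant term $c_i$ contributes nothing), and that support, being a translate of a subset of a well-ordered set, attains its minimum, which is therefore $>-e_j$. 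Hence $z_j\cdot\partial^1v_i/\partial^1z_j$ has strictly positive value, so $w_{ij}:=a_{ij}v_i+z_j\,\partial^1v_i/\partial^1z_j\in\mathbb{C}[[\Gamma]]$ with $w_{ij}\equiv a_{ij}c_i\pmod{\mathfrak{m}}$. Writing the Jacobian as $M=\operatorname{diag}(z^{\gamma_1},\dots,z^{\gamma_n})\cdot(w_{ij})\cdot\operatorname{diag}(z^{-e_1},\dots,z^{-e_n})$, we get $\det M=z^{\sum_i\gamma_i-\sum_je_j}\cdot\det(w_{ij})$, and $\det(w_{ij})\equiv\bigl(\prod_ic_i\bigr)\det A\pmod{\mathfrak{m}}$ is a unit of $\mathbb{C}[[\Gamma]]$ because $\prod_ic_i\in\mathbb{C}^{*}$ and $\det A\in\mathbb{Q}^{*}$; therefore $\nu(\det M)=\sum_i\gamma_i-\sum_je_j$, which is $(\star)$.

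The only genuinely delicate step is the estimate $\nu(\partial^1v_i/\partial^1z_j)>-e_j$, for which one needs that formal termwise differentiation of a generalized power series indeed produces an element of $\mathbb{C}((\Gamma))$ and behaves compatibly with the ordering on $\Gamma$; this is precisely the content of the well-orderedness discussion preceding \prettyref{def:D10}, and it is here that the hypothesis that the Krull dimension of $\mathbb{C}[[\Gamma]]$ equals $\dim_{\mathbb{Q}}\Gamma$ ensures the differential calculus of that definition is legitimate. Everything else is routine bookkeeping with Jacobians, together with the fact that nonzero rationals are units of value zero.
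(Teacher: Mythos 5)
Your proof is correct, but it follows a genuinely different route from the paper's. The paper first disposes of rescalings $y_i\mapsto y_i^{m_i}$ (\prettyref{lem:L2016}) and then invokes the exchange principle for $\mathbb{Q}$-bases to reduce to swapping one element at a time: it writes the new element $y_{i+1}$ as a generalized power series $p_{i+1}$ in the old basis, observes that the wedge product picks out the single partial derivative $\partial^1p_{i+1}/\partial^1x_{i+1}$, and verifies $\nu(\partial^1p_{i+1}/\partial^1x_{i+1})+\nu(x_{i+1})=\nu(p_{i+1})$ by inspecting the leading exponent (whose $(i+1)$-st coordinate must be nonzero, else the new family would fail to be a $\mathbb{Q}$-basis). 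You instead fix the monomial reference basis $z_j=z^{e_j}$ once and for all and compute the full Jacobian determinant, factoring it as $z^{\sum\gamma_i-\sum e_j}$ times a matrix that reduces mod $\mathfrak{m}$ to $\operatorname{diag}(c_i)\cdot A$ with $A$ the rational change-of-basis matrix; invertibility of $A$ over $\mathbb{Q}$ makes the residual determinant a unit. Your version handles independence of the basis $(\gamma_i)$, of the representatives $(y_i)$, and the nonvanishing of $d^1y_1\wedge\dots\wedge d^1y_n$ in one stroke, subsumes the paper's rescaling lemma, and avoids the iterated exchange; the paper's incremental version is closer in spirit to the arguments it reuses later for general Abhyankar places (\prettyref{thm:T49}), where only one basis element is exchanged at a time. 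Your key estimate $\nu(\partial^1v_i/\partial^1z_j)>-e_j$ is sound: termwise differentiation kills the constant term, introduces no cancellation between distinct exponents, and the well-ordered support guarantees the minimum is attained. One small caveat: the legitimacy of termwise differentiation rests on the well-orderedness discussion preceding \prettyref{def:D10}, not really on the Krull-dimension hypothesis as you suggest in your closing remark; that hypothesis serves to make the rank of $\Gamma$ maximal so that the monomials $z^{e_j}$ genuinely record all of $\Gamma$, but this does not affect the validity of your computation.
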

 We first prove an easy step of \prettyref{prop:P1}, namely the following 
 \begin{lemma}\mylabel{lem:L2016} Let $\nu\in R_{Ab}(\overline{K(X)}/\mathbb C)$\, be an Abhyankar place of maximal rational rank. Then the above valuation formula holds if we exchange a $\mathbb Q$-basis
 $$(y_1,...,y_n)\,\, \mbox{to}\,\, (y_1^{m_1},y_1^{m_2},...,y_n^{m_n})\,\, \mbox{for}\,\, m_i\in \mathbb Q\backslash\{0\},\,\, i=1,...,n.$$
 \end{lemma}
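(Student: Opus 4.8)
The plan is to reduce the statement to the power rule for the canonical derivation $d^1$ on $\mathbb{C}((\Gamma))$, followed by a purely $\mathbb{Q}$-linear bookkeeping of values. Throughout we work inside the fixed rank one $\mathbb{C}((\Gamma))$-space $\Lambda^n\Omega^1_{top}(\mathbb{C}((\Gamma))/\mathbb{C})$ of \prettyref{prop:P1}. Write $m_i=p_i/q_i$ with $p_i,q_i\in\mathbb{Z}\setminus\{0\}$ and put $z_i:=y_i^{m_i}$ (any fixed determination; see below). Since $m_i\neq 0$ and $\nu(z_i)=m_i\gamma_i$, the tuple $(m_1\gamma_1,\dots,m_n\gamma_n)$ is again a $\mathbb{Q}$-basis of $\Gamma$, so $(z_1,\dots,z_n)$ is an admissible choice for the valuation formula of \prettyref{prop:P1}.

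First I would record that $d^1\colon\mathbb{C}((\Gamma))\to\Omega^1_{top}(\mathbb{C}((\Gamma))/\mathbb{C})$ is a genuine $\mathbb{C}$-derivation: additivity is immediate from $d^1f=\overline{1\otimes f}$ and $\mathbb{C}$-linearity of $1\otimes(-)$, and the Leibniz rule is precisely the inclusion $M_L\subseteq M_{FD,j}$ established in \prettyref{lem:L2050}. From Leibniz one gets by an easy induction $d^1(u^N)=Nu^{N-1}d^1u$ for every unit $u$ and every $N\in\mathbb{Z}$. Because $\Gamma$ is a divisible ordered group and $\mathbb{C}$ is algebraically closed, $\mathbb{C}((\Gamma))$ is algebraically closed, so each $y_i$ admits a $q_i$-th root $w_i$; applying the integral power rule to $w_i^{q_i}=y_i$ and then to $z_i=w_i^{p_i}$ yields
$$d^1z_i=m_i\,w_i^{\,p_i-q_i}\,d^1y_i ,\qquad \nu\bigl(w_i^{\,p_i-q_i}\bigr)=(m_i-1)\gamma_i .$$
In particular $d^1z_i$ is a nonzero $\mathbb{C}((\Gamma))$-multiple of $d^1y_i$, so writing $y_i^{m_i-1}$ for a determination of value $(m_i-1)\gamma_i$ we may abbreviate $d^1z_i=m_i\,y_i^{\,m_i-1}\,d^1y_i$; nothing below depends on which root was chosen.

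Next I would take the wedge product: from the previous display,
$$d^1z_1\wedge\cdots\wedge d^1z_n=\Bigl(\prod_{i=1}^n m_i\,y_i^{\,m_i-1}\Bigr)\,d^1y_1\wedge\cdots\wedge d^1y_n .$$
Hence, writing $\omega=f\cdot d^1y_1\wedge\cdots\wedge d^1y_n=g\cdot d^1z_1\wedge\cdots\wedge d^1z_n$, we have $g=f\cdot\prod_i m_i^{-1}y_i^{\,1-m_i}$, and the value of $\omega$ computed from the basis $(z_i)$ is
$$\nu(g)+\sum_{i=1}^n m_i\gamma_i=\Bigl(\nu(f)+\sum_{i=1}^n(1-m_i)\gamma_i\Bigr)+\sum_{i=1}^n m_i\gamma_i=\nu(f)+\sum_{i=1}^n\gamma_i ,$$
where we used $\nu(m_i^{-1})=0$ and $\nu(y_i^{\,1-m_i})=(1-m_i)\gamma_i$. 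The right-hand side is exactly the value computed from the basis $(y_i)$, which is what was to be shown.

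The only point that requires care is the passage to the rational powers $y_i^{m_i}$: one must know both that these elements exist in $\mathbb{C}((\Gamma))$ and that $d^1$ obeys the fractional power rule. I would handle existence by citing algebraic closedness of $\mathbb{C}((\Gamma))$ (divisible value group, algebraically closed residue field) rather than by summing a binomial series, thereby avoiding any convergence question in the Hahn series field; and I would derive the power rule formally from additivity and the Leibniz rule of \prettyref{lem:L2050}, so that it holds on the nose in $\Omega^1_{top}$. Everything else is the linear algebra of values displayed above.
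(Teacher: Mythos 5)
Your proof is correct and follows essentially the same route as the paper's: both reduce the claim to the formal power rule $d^1(y^{m})=m\,y^{m-1}d^1y$ for $m\in\mathbb Q\setminus\{0\}$ and then perform the identical bookkeeping $\sum_i(m_i-1)\gamma_i+\sum_i\gamma_i=\sum_i m_i\gamma_i$. The paper simply asserts the fractional power rule ``by formal differentiation calculus,'' whereas you justify it from the Leibniz relation of \prettyref{lem:L2050} and the existence of $q_i$-th roots in $\mathbb C((\Gamma))$ (divisible value group), which is a harmless and indeed welcome amplification rather than a different argument.
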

 \begin{proof}
By formal differentiation calculus we have 
\begin{gather*}d^1y_1^{-1}\wedge d^1y_2\wedge ....\wedge d^1y_n= \frac{-1}{y_1^2}d^1y_1\wedge d^1y_2\wedge ...\wedge d^1y_n\,\,\text{and}\\
\nu(y_1^{-1})+\nu(y_2)+...\nu(y_n)=-\nu(y_1)+\nu(y_2)+...+\nu(y_n)\\
=\nu(\frac{-1}{y_1^2})+\nu(y_1)+\nu(y_2)+...\nu(y_n)
\end{gather*} and, more generally for $m_i\in \mathbb \mathbb Q\backslash\{0\},$\, 
\begin{gather*}d^1y_1^{m_1}\wedge d^1y_2^{m_2}\wedge ...\wedge d^1y_n^{m_n}\\
=m_1\cdot y_1^{m_1-1}\cdot m_2\cdot y_2^{m_2-1}\cdot ...\cdot m_n\cdot y_n^{m_n-1}\cdot d^1y_1\wedge  ...\wedge d^1y_n\,\,\text{and}\\
\nu(y_1^{m_1})+\nu(y_2^{m_2})+...+\nu(y_n^{m_n})=\sum_{i=1}^nm_i\cdot \nu(y_i)\\
=\sum_{i=1}^n(m_i-1)\cdot \nu(y_i)+\sum_{i=1}^n\nu(y_i)=\\
\nu(\prod_{i=1}^nm_i\cdot y_i^{m_i-1}\cdot d^1y_1\wedge d^1y_2\wedge ...\wedge d^1y_n)
\end{gather*}
\end{proof}
\begin{proof} Turning back to the proof of our theorem, we may assume without loss of generality by the exchange principle for $\mathbb Q$-basis (which is not more than the basis exchange principle for $\mathbb Q$-vector spaces)  that for each 
 $$i=1,...,n,\,\,\, (y_1,...,y_i,x_{i+1},...,x_n)$$ is also a $\mathbb Q$-basis. Then, it suffices to consider each change of $\mathbb Q$-basis 
 $$(y_1,...,y_i,y_{i+1},x_{i+2},...,x_n)\,\, \mbox{to}\,\, (y_1,...,y_i,x_{i+1},...,x_n).$$ The $\mathbb Q$-basis $B_i:(y_1,...,y_i, x_{i+1},...,x_n)$\, determines an order isomorphism $\Gamma_{\nu}\cong (\mathbb Q^n, <_{B_i})$\, which induces an isomorphism of generalized power series fields $\mathbb C((\Gamma_{\nu}))\cong \mathbb C(((\mathbb Q^n, <_{B_i})))$\,. Via this isomorphism we may view $y_{i+1}$\, as an element of the latter ring and  there is a generalized power series representation 
 $$y_{i+1}=p_{i+1}(y_1,y_2,...,y_k,x_{k+1},...,x_n).$$ We now calulate differentials in $\Lambda^n\Omega_{top}(\mathbb C((\Gamma))/\mathbb C)$\, where we use for the construction of this module the basis $B_i$\,.
We have 
\begin{gather*}d^1y_1\wedge...\wedge d^1 y_i\wedge d^1p_{i+1}(y_1,...,y_i,x_{i+1},...,x_n)\wedge d^1x_{i+2}\wedge...\wedge d^1x_n\\
=d^1y_1\wedge ...\wedge d^1y_i\wedge(\sum_{j=1}^i\partial^1p_{i+1}/\partial^1y_j\cdot d^1y_j+\sum_{j=i+1}^n\partial^1p_{i+1}/\partial^1x_j\cdot d^1x_j)\\
\wedge d^1x_{i+2}\wedge ...\wedge d^1x_n\\
=\partial^1 p_{i+1}/\partial^1 x_{i+1}\cdot d^1y_1\wedge...\wedge d^1y_i\wedge d^1x_{i+1}\wedge ...\wedge d^1x_n.
\end{gather*}
 All we have to show is that
\begin{gather*}\nu(y_1)+...+\nu(y_i)+\nu(\partial^1p_{i+1}(y_1,...,y_i,x_{i+1},...,x_n)/\partial^1x_{i+1})\\
+\nu(x_{i+1})+\nu(x_{i+2})+...+\nu(x_{n})\\
=\nu(y_1)+...+\nu(y_{i+1})+\nu(x_{i+2})+...+\nu(x_n),\,\,\text{or}\\
\nu(\partial^1p_{i+1}/\partial^1x_{i+1})+\nu(x_{i+1})=\nu(p_{i+1}).
\end{gather*}
Because $y_1,...,y_i,y_{i+1},x_{i+2},...,x_n$\, is a $\mathbb Q$-basis, we know that the least nonzero value $\gamma_0$ for which the coefficient $a_{\gamma_0}$\, of $p_{i+1}$\, is nonzero is of the form 
$$\gamma_0=r_1e_1+r_2e_2+...+r_ie_i+r_{i+1}e_{i+1}+...+r_ne_n\,\, \text{with}\,\, r_{i+1}\neq 0.$$
 Otherwise $e_1,...,e_i,\gamma_0,e_{i+2},...e_n$\, is not a $\mathbb Q$-basis. Then, by formal differentiation, the least nonzero $\delta_0$\, for which the coefficient in $\partial^1p_{i+1}/\partial^1x_{i+1}$\, is nonzero is 
$$\nu(\partial^1p_{i+1}/\partial^1x_{i+1})=\delta_0=r_1e_1+...+r_ie_i+(r_{i+1}-1)e_{i+1}+r_{i+2}e_{i+2}+...+r_ne_n.$$
Since $\nu(x_{i+1})=e_{i+1},$\, we get the claim.
\end{proof}
\section{Valuation of top differential forms of function fields}
\subsection{Abhyankar-places of dimension zero}
 We turn now to our main question  of valuating rational top differential forms of a function field $K(X)/\mathbb C$\, at an Abhyankar place $\nu\in R_{Ab}^{n,n}(K(X)/\mathbb C),\,\, n=\dim(X)$, for which we have $\dim(\nu)=0$\,. The idea is to embed $K(X)/\mathbb C$\, into a generalized Laurent series field $\mathbb C((\Gamma_{\nu}))$\,. This is always possible by \prettyref{thm:T2}. 
 Obviously, if $(f_1,...,f_n)\in K(X)^n$\, is a $\mathbb Q$-basis for $\Gamma_{\nu}$\,, then $\overline{\nu}(f_1),...,\overline{\nu}(f_n)$\, forms a $\mathbb Q$-vector space basis for $\Gamma_{\overline{\nu}}$\, via the natural inclusion 
$\Gamma_{\nu}\hookrightarrow \Gamma_{\overline{\nu}}$\,.
\begin{theorem}\mylabel{thm:T48} Let $K(X)/\mathbb C$\, be an algebraic function field of transcendence degree $n$\, and $\nu\in R^{n}_{Ab}(K(X)/\mathbb C)$\, be an Abhyankar place of dimension zero. Let $\omega\in \Lambda^n\Omega^1(K(X)/\mathbb C)$\, be a rational top differential form. Let $t_1,...,t_n\in K(X)$\, be an arbitrary $\mathbb Q$-basis for $\nu$\, and write
$$\omega=f\cdot d^1t_1\wedge d^1t_2\wedge ...\wedge d^1t_n.$$ Then, if we define 
$$\nu(\omega):=\nu(f)+\nu(t_1)+\nu(t_2)+...+\nu(t_n)\in \Gamma_{\nu},$$
then this value is independent of the choosen $\mathbb Q$-basis $t_1,...,t_n\in K(X)$\,.
\end{theorem}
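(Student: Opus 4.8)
The plan is to transport the problem, via Kaplansky's embedding theorem, into the situation already handled by \prettyref{prop:P1}, after first replacing $K(X)$ by its algebraic closure so that the relevant value group becomes a $\mathbb Q$-vector space. Fix an extension $\overline{\nu}\in R_{Ab}(\overline{K(X)}/\mathbb C)$ of $\nu$. Since $\kappa(\overline{\nu})$ is algebraic over $\kappa(\nu)=\mathbb C$ it equals $\mathbb C$, so $\dim(\overline{\nu})=0$; the inclusion $\Gamma_{\nu}\hookrightarrow\Gamma_{\overline{\nu}}$ identifies $\Gamma_{\overline{\nu}}$ with $\Gamma_{\nu}\otimes_{\mathbb Z}\mathbb Q$ by \prettyref{lem:L100}, so $rr(\overline{\nu})=rr(\nu)=n$ and the chain of convex subgroups is unchanged, whence $r(\overline{\nu})=r(\nu)=n$. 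In particular $\overline{\nu}$ is again an Abhyankar place of maximal rational rank, $\Gamma_{\overline{\nu}}\cong\mathbb Q^n$, and $\mathbb C[[\Gamma_{\overline{\nu}}]]$ has Krull dimension $n=\dim_{\mathbb Q}\Gamma_{\overline{\nu}}$, so that both \prettyref{def:D10} and \prettyref{prop:P1} apply to $\Gamma_{\overline{\nu}}$. Because $\overline{K(X)}/K(X)$ is algebraic and the characteristic is zero, the base change map $\Omega^1(K(X)/\mathbb C)\otimes_{K(X)}\overline{K(X)}\longrightarrow\Omega^1(\overline{K(X)}/\mathbb C)$ is an isomorphism; hence $\omega$ has a well defined image in $\Lambda^n\Omega^1(\overline{K(X)}/\mathbb C)$, still written $\omega=f\cdot d^1t_1\wedge\cdots\wedge d^1t_n$, with $\overline{\nu}(f)=\nu(f)$ and $\overline{\nu}(t_i)=\nu(t_i)$, and $(\overline{\nu}(t_1),\ldots,\overline{\nu}(t_n))$ is a $\mathbb Q$-basis of $\Gamma_{\overline{\nu}}$. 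As $\Gamma_{\nu}\hookrightarrow\Gamma_{\overline{\nu}}$ is injective, it suffices to prove independence of the value computed in $\Gamma_{\overline{\nu}}$.

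Next I would apply \prettyref{thm:T2} to $(\overline{K(X)},\overline{\nu})$ to obtain an embedding over $\mathbb C$ $$j_{\overline{\nu}}:\overline{K(X)}\hookrightarrow\mathbb C((\Gamma_{\overline{\nu}}))$$ inducing $\overline{\nu}$ from the canonical valuation, and then invoke \prettyref{lem:L15}. Taking top exterior powers of the isomorphism $\beta_{\overline{K(X)}}:\Omega^1(\overline{K(X)}/\mathbb C)\otimes_{\overline{K(X)}}\mathbb C((\Gamma_{\overline{\nu}}))\xrightarrow{\ \cong\ }\Omega^1_{top}(\mathbb C((\Gamma_{\overline{\nu}}))/\mathbb C)$ gives an isomorphism of rank-one $\mathbb C((\Gamma_{\overline{\nu}}))$-vector spaces on the $n$-th wedge powers. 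Since $\beta_{\overline{K(X)}}$ is induced by the natural composite $\Omega^1(\overline{K(X)}/\mathbb C)\to\Omega^1(\mathbb C((\Gamma_{\overline{\nu}}))/\mathbb C)\twoheadrightarrow\Omega^1_{top}(\mathbb C((\Gamma_{\overline{\nu}}))/\mathbb C)$, it sends $d^1g$ to $d^1j_{\overline{\nu}}(g)$, so $\omega\otimes 1$ maps to $$\widetilde{\omega}:=j_{\overline{\nu}}(f)\cdot d^1j_{\overline{\nu}}(t_1)\wedge\cdots\wedge d^1j_{\overline{\nu}}(t_n)\in\Lambda^n\Omega^1_{top}(\mathbb C((\Gamma_{\overline{\nu}}))/\mathbb C),$$ which is exactly an expression of $\widetilde{\omega}$ of the type occurring in \prettyref{prop:P1} with $\mathbb Q$-basis $\gamma_i=\overline{\nu}(t_i)$ and lifts $y_i=j_{\overline{\nu}}(t_i)$.

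Now \prettyref{prop:P1} applies verbatim and yields $$\overline{\nu}(\widetilde{\omega})=\overline{\nu}(j_{\overline{\nu}}(f))+\overline{\nu}(t_1)+\cdots+\overline{\nu}(t_n)=\overline{\nu}(f)+\overline{\nu}(t_1)+\cdots+\overline{\nu}(t_n),$$ and asserts that this value does not depend on the chosen $\mathbb Q$-basis of $\Gamma_{\overline{\nu}}$ nor on the chosen lifts; applying this to two $\mathbb Q$-bases $t_1,\ldots,t_n$ and $s_1,\ldots,s_n$ lying in $K(X)$ (together with the corresponding lifts $j_{\overline{\nu}}(t_i)$, $j_{\overline{\nu}}(s_i)$, and the respective coefficients $f$, $g$) gives $\nu(f)+\sum_i\nu(t_i)=\nu(g)+\sum_i\nu(s_i)$, which is the assertion. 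The one point that genuinely needs care — and where the hypotheses $\dim(\nu)=0$ and $rr(\nu)=n$ enter decisively — is the reduction to $\overline{K(X)}$: the construction of $\Omega^1_{top}$ in \prettyref{def:D10} and the statement of \prettyref{prop:P1} require the value group to be a $\mathbb Q$-vector space of dimension $n$ whose generalized power series ring has Krull dimension $n$, a property that $\Gamma_{\nu}\cong\mathbb Z^n$ fails but $\Gamma_{\overline{\nu}}\cong\mathbb Q^n$ satisfies, so one must check that $\overline{\nu}$ inherits precisely these properties and that extending scalars along the algebraic extension $\overline{K(X)}/K(X)$ disturbs neither the differential form nor the values $\nu(f),\nu(t_i)$.
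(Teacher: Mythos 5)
Your proposal is correct and follows essentially the same route as the paper's own proof: pass to an extension $\overline{\nu}$ of $\nu$ on $\overline{K(X)}$ so that the value group becomes $\Gamma_{\nu}\otimes_{\mathbb Z}\mathbb Q$, embed into $\mathbb C((\Gamma_{\overline{\nu}}))$ via \prettyref{thm:T2}, transport $\omega$ through the isomorphism of \prettyref{lem:L15}, and conclude with \prettyref{prop:P1}. Your added checks (that $\overline{\nu}$ remains a zero-dimensional Abhyankar place of maximal rational rank and that values of $f$ and the $t_i$ are preserved under the algebraic extension) are points the paper passes over more quickly, but the argument is the same.
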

\begin{proof}The idea is to embed $K(X)/\mathbb C$\, into a generalized Laurent series field $\mathbb C((\Gamma_{\nu}))$\,. This is always possible by \prettyref{thm:T2}. That is, there is an embedding of fields over $\mathbb C,$\, 
$$j_{\nu}:K(X)\hookrightarrow \mathbb C((\Gamma_{\nu})),$$
 such that the natural valuation $\nu: \mathbb C((\Gamma_{\nu}))\longrightarrow \Gamma_{\nu}$\, is an extension of $\nu: K(X)\longrightarrow \Gamma_{\nu}$\,. Let 
$$\overline{\nu}: \overline{K(X)}\longrightarrow \Gamma_{\overline{\nu}}\cong\Gamma_{\nu}\otimes_{\mathbb Z}\mathbb Q$$
 be an extension of $\nu$\, to the algebraic closure of $K(X)/\mathbb C$\,. By the same theorem, there is an embedding over $\mathbb C$\,,
 $$j_{\overline{\nu}}: \overline{K(X)}\hookrightarrow \mathbb C((\Gamma_{\overline{\nu}}))$$ The idea is to pass to the algebraic closure $\overline{K(X)}/\mathbb C$\, and a valuation $\overline{\nu}\in R^n_{Ab}(\overline{K(X)}/\mathbb C)$\, extending $\nu$\,. Then, since 
$$\Omega^1(\overline{K(X)}/\mathbb C)\cong \Omega^1(K(X)/\mathbb C)\otimes_{K(X)}\overline{K(X)},$$ we may view $\omega$ as a rational top differential form of $\overline{K(X)}$\, and prove the statement for arbitrary $\mathbb Q$-vector space basis $t_1,...,t_n\in \overline{K(X)}$ for $\overline{\nu}$\,. This then implies the statement of the theorem, since via the embedding $\Gamma_{\nu}\hookrightarrow \Gamma_{\overline{\nu}}$\, the independence of the value $\nu(\omega)$\, will follow from the independence of the value $\overline{\nu}(\omega)\in \Gamma_{\overline{\nu}}$\,.\\We have the following commutative diagram of embeddings of valued fields
\begin{equation*}
\begin{CD} K(X)@>j_{\nu}>> \mathbb C((\Gamma))\\
          @VV\subset V                  @VV\subset V\\
         \overline{K(X)} @>j_{\overline{\nu}}>> \mathbb C((\Gamma_{\overline{\nu}})). 
\end{CD}
\end{equation*}
Then 
$$j_{\overline{\nu}}(x_1),...,j_{\overline{\nu}}(x_n)\,\, \text{as well as}\,\, j_{\overline{\nu}}(y_1),...,j_{\overline{\nu}}(y_n)$$  give trivializations 
$$\Gamma_{\overline{\nu}}\cong (\mathbb Q^n,<_{\underline{x}})\,\,\mbox{and}\,\, \Gamma_{\overline{\nu}}\cong (\mathbb Q^n,<_{\underline{y}})$$ in $\mathbb C[[\Gamma_{\overline{\nu}}]]$\,. By \prettyref{lem:L15}, there is a canonical isomorphism
$$\alpha(j_{\overline{\nu}}):\,\,\Lambda^n\Omega^1(\overline{K(X)}/\mathbb C)\otimes_{\overline{K(X)}}\mathbb C((\Gamma_{\overline{\nu}}))\stackrel{\cong}\longrightarrow \Lambda^n\Omega^1_{top}(\mathbb C((\Gamma_{\overline{\nu}}))/\mathbb C)$$ 
and via this isomorphism we may view a given rational top differential form of $\overline{K(X)}/\mathbb C$\, as a rational top differential form of $\mathbb C((\Gamma_{\overline{\nu}}))/\mathbb C$\,. Let 
$$\omega= f\cdot d^1x_1\wedge d^1x_2\wedge ...\wedge d^1x_n=g\cdot d^1y_1\wedge d^1y_2\wedge ...\wedge d^1y_n$$ 
be two represenations of $\omega$\, with $f,g\in \overline{K(X)}$\,. Then 
$$\alpha(j_{\overline{\nu}}(\omega))=j_{\overline{\nu}}(f)\cdot d^1j_{\overline{\nu}}(x_1)\wedge ...\wedge d^1j_{\overline{\nu}}(x_n)=j_{\overline{\nu}}(g)\cdot d^1j_{\overline{\nu}}(y_1)\wedge ...\wedge d^1j_{\overline{\nu}}(y_n)$$ 
are two presentations of $\alpha(j_{\overline{\nu}})(\omega).$ By \prettyref{prop:P1}, we have 
\begin{gather*}\overline{\nu}(\alpha(j_{\overline{\nu}}(\omega))=\overline{\nu}(j_{\overline{\nu}}(f))+\overline{\nu}(j_{\overline{\nu}}(x_1))+...\overline{\nu}(j_{\overline{\nu}}(x_n))=\\
\overline{\nu}(j_{\overline{\nu}}(g))+\overline{\nu}(j_{\overline{\nu}}(y_1))+...+\overline{\nu}(j_{\overline{\nu}}(y_n)),
\end{gather*}
where we called the canonical valuation on $\mathbb C((\Gamma_{\overline{\nu}}))$\, simply again $\overline{\nu}$\,. Since 
$$j_{\overline{\nu}}:\overline{K(X)}\hookrightarrow \mathbb C((\Gamma_{\overline{\nu}}))$$ respects the valuations, we get
$$\overline{\nu}(\omega)=\overline{\nu}(f)+\overline{\nu}(x_1)+...+\overline{\nu}(x_n)=\overline{\nu}(g)+\overline{\nu}(y_1)+...+\overline{\nu}(y_n)$$
which is what we wanted to prove.
\end{proof} 
\subsection{The case of general Abhyankar places}
We will prove the following general
\begin{theorem}\mylabel{thm:T49} 
Let $K(X)/\mathbb C$\, be a function field of transcendence degree $n$ over $\mathbb C$\, and $\nu\in R_{Ab}(K(X)/\mathbb C)$\, be an Abhyankar place of dimension $k$\,. Let $\omega\in \Lambda^n\Omega^1(K(X)/\mathbb C)$\, be a rational top differential form. Choose a $\mathbb Q$-basis $(t_1,...,t_{n-k})$\, for $\nu$\, and lifts $x_{n-k+1}, ...,x_n\in K(X)$\, of a transcendence basis $\overline{x_{n-k+1}}, ...\overline{x_n}\in \kappa(\nu)/\mathbb C$\,. Write 
$$(+)\,\,\omega=f\cdot d^1t_1\wedge d^1t_2\wedge ...\wedge d^1t_{n-k}\wedge d^1x_{n-k+1}\wedge ...\wedge d^1x_n.$$
 Then if we define 
$$\nu(\omega)=\nu(f)+\nu(t_1)+...+\nu(t_{n-k})\in \Gamma_{\nu},$$ then this value is independent of the choosen $\mathbb Q$-basis $t_1,...,t_{n-k}$\,, independent of the transcendence basis $\overline{x_{n-k+1}},...,\overline{x_n}$\, of $\kappa(\nu)/\mathbb C$\, and independent of the choosen lifts $x_{n-k+1},...,x_n\in A_{\nu}$\,.
\end{theorem}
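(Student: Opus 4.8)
\emph{Strategy.} The plan is to reduce the statement to the zero--dimensional case already settled in \prettyref{thm:T48}, by composing $\nu$ with a dimension--zero Abhyankar place of its residue field. Exactly as in the proof of \prettyref{thm:T48} we first pass to the algebraic closure: replace $K(X)$ by $\overline{K(X)}$, $\nu$ by an extension $\overline{\nu}$ (again Abhyankar of dimension $k$, with $\Gamma_{\overline{\nu}}=\Gamma_{\nu}\otimes_{\mathbb Z}\mathbb Q$), and $\omega$ by its image under $\Lambda^n\Omega^1(\overline{K(X)}/\mathbb C)=\Lambda^n\Omega^1(K(X)/\mathbb C)\otimes_{K(X)}\overline{K(X)}$. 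Since $\Gamma_{\nu}\hookrightarrow\Gamma_{\overline{\nu}}$, it suffices to prove that $\overline{\nu}(f)+\sum_i\overline{\nu}(t_i)$ is independent of all choices, now allowing an arbitrary $\mathbb Q$--basis $t_1,\dots,t_{n-k}\in\overline{K(X)}$ for $\overline{\nu}$; note that $t_1,\dots,t_{n-k},x_{n-k+1},\dots,x_n$ is automatically a transcendence basis of $\overline{K(X)}/\mathbb C$ (the usual consequence of the Abhyankar equality), so that the presentation $(+)$ makes sense.

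\emph{The device.} The residue field $\kappa(\overline{\nu})$ has transcendence degree $k$ over $\mathbb C$ and carries dimension--zero Abhyankar places $\mu$ of rational rank $k$, for instance monomial valuations (\prettyref{ex:E60}); for any such $\mu$ the composition $\lambda:=\overline{\nu}\circ\mu$ is a dimension--zero Abhyankar place of $\overline{K(X)}/\mathbb C$ of maximal rational rank $n$, to which \prettyref{thm:T48} applies, and if $\pi\colon\Gamma_{\lambda}\twoheadrightarrow\Gamma_{\overline{\nu}}$ denotes the surjection of the exact sequence $(**)$ then $\pi\circ\lambda=\overline{\nu}$ on $\overline{K(X)}^{*}$. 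Given a transcendence basis $\overline{x_{n-k+1}},\dots,\overline{x_n}$ of $\kappa(\overline{\nu})/\mathbb C$ one may moreover choose $\mu$ \emph{adapted} to it (an extension to $\kappa(\overline{\nu})$ of the monomial valuation of $\mathbb C(\overline{x_{n-k+1}},\dots,\overline{x_n})$ with $\mathbb Q$--independent weights), so that $\mu(\overline{x_{n-k+1}}),\dots,\mu(\overline{x_n})$ is a $\mathbb Q$--basis for $\mu$; since the $x_j$ are $\overline{\nu}$--units we then have $\lambda(x_j)=\mu(\overline{x_j})$, and $(t_1,\dots,t_{n-k},x_{n-k+1},\dots,x_n)$ is a $\mathbb Q$--basis for $\lambda$ by exactness of $(**)$. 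Applying \prettyref{prop:P1}/\prettyref{thm:T48} to $\lambda$ with this $\mathbb Q$--basis and the presentation $(+)$ gives $\lambda(\omega)=\lambda(f)+\sum_i\lambda(t_i)+\sum_j\lambda(x_j)$, whence $\pi(\lambda(\omega))=\overline{\nu}(f)+\sum_i\overline{\nu}(t_i)$ because $\overline{\nu}(x_j)=0$. As $\lambda$ (hence $\pi$) depends only on the fixed residues $\overline{x_j}$, and $\lambda(\omega)$ is by \prettyref{thm:T48} independent of the $\mathbb Q$--basis of $\lambda$ used to compute it, this already proves independence of $\overline{\nu}(f)+\sum_i\overline{\nu}(t_i)$ of both the $\mathbb Q$--basis $t_\bullet$ and the lifts $x_\bullet$, for a fixed residue transcendence basis.

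\emph{Changing the residue transcendence basis.} By the Steinitz exchange property for transcendence bases it is enough to treat a single swap $\overline{x_n}\rightsquigarrow\overline{x'_n}$, keeping $\overline{x_{n-k+1}},\dots,\overline{x_{n-1}}$ and their lifts fixed; by the previous step we may use a $\mathbb Q$--basis $t_\bullet$ with all $\overline{\nu}(t_i)>0$. Writing $\omega$ with both bases and comparing in $\Lambda^n\Omega^1(\overline{K(X)}/\mathbb C)$ gives $f'=f\cdot J$ with $J:=\partial^1 x_n/\partial^1 x'_n$, the partial derivative taken with respect to the transcendence basis $(t_1,\dots,t_{n-k},x_{n-k+1},\dots,x_{n-1},x'_n)$, so the whole theorem reduces to the claim $\overline{\nu}(J)=0$. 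I would prove this by exhibiting a regular local ring $R$ with $\mathbb C[t_1,\dots,t_{n-k},x_{n-k+1},\dots,x_{n-1},x'_n]_{(t_1,\dots,t_{n-k})}\subseteq R\subseteq A_{\overline{\nu}}$, dominated by $A_{\overline{\nu}}$, containing $x_n$ and $J$, and such that $\partial^1/\partial^1 x'_n$ restricts to a derivation of $R$ preserving $\mathfrak{m}_R$. Such a derivation descends to $\kappa(R)\hookrightarrow\kappa(\overline{\nu})$; by construction it kills $\overline{t_i}=0$ and $\overline{x_j}$ ($n-k+1\le j\le n-1$) and sends $\overline{x'_n}$ to $1$, so it is the derivation dual to the transcendence basis $\overline{x_{n-k+1}},\dots,\overline{x_{n-1}},\overline{x'_n}$ of $\kappa(\overline{\nu})/\mathbb C$. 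Reducing $J$ modulo $\mathfrak{m}_R$ then gives $\overline{J}=\partial^1\overline{x_n}/\partial^1\overline{x'_n}$, which is nonzero since $d^1\overline{x_n}$ is not in the span of $d^1\overline{x_{n-k+1}},\dots,d^1\overline{x_{n-1}}$ in the $k$--dimensional space $\Omega^1(\kappa(\overline{\nu})/\mathbb C)$; hence $\overline{\nu}(J)=0$.

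\emph{Where the difficulty lies.} The main obstacle is the construction of $R$ with the derivation stability built in. The ring $\mathbb C[t_1,\dots,t_{n-k},x_{n-k+1},\dots,x_{n-1},x'_n]_{(t_1,\dots,t_{n-k})}$ is regular local, dominated by $A_{\overline{\nu}}$ (because $\overline{\nu}(t_i)>0$ while the $\overline{x_j},\overline{x'_n}$ are $\overline{\nu}$--units, algebraically independent over $\mathbb C$), with residue field $\mathbb C(\overline{x_{n-k+1}},\dots,\overline{x_{n-1}},\overline{x'_n})$, and it is visibly preserved by $\partial^1/\partial^1 x'_n$; since $\overline{K(X)}$ is finite separable over its fraction field, $\partial^1/\partial^1 x'_n$ extends uniquely to a derivation of the normalization, which still lies in $A_{\overline{\nu}}$ and contains $x_n$ and $J$. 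The delicate part is to arrange that this extended derivation genuinely preserves the maximal ideal of the local ring obtained at the center of $\overline{\nu}$; for this one would invoke resolution of singularities and local uniformization of the Abhyankar place $\overline{\nu}$ to put $\overline{\nu}$ onto a model whose center is a regular point with a monomial description and residue field $\kappa(\overline{\nu})$, and then control the descent of the derivation there. This is where the commutative algebra is concentrated; the remaining ingredients --- the reduction to $\overline{\nu}(t_i)>0$, the exterior--algebra identities, and the bookkeeping with the exact sequence $(**)$ --- are routine.
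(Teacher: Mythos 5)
Your reduction of the first and third independence claims (independence of the $\mathbb Q$-basis $t_\bullet$ and of the lifts $x_\bullet$, for a fixed residue transcendence basis) to \prettyref{thm:T48} via composition with a dimension-zero Abhyankar place $\mu$ of $\kappa(\nu)$ and projection through the exact sequence $(**)$ is exactly the paper's argument, and it is correct. The problem is the second claim, the change of residue transcendence basis. You correctly reduce it (via Steinitz exchange) to showing that the Jacobian factor $J=\partial^1 x_n/\partial^1 x'_n$ is a $\overline{\nu}$-unit whose residue is the corresponding nonzero derivative in $\kappa(\overline{\nu})$, but your proposed proof of that fact --- producing a regular local ring $R$ between $\mathbb C[\underline t,\underline x]_{(\underline t)}$ and $A_{\overline{\nu}}$ that contains $x_n$ and $J$ and is stable under a derivation extending $\partial^1/\partial^1 x'_n$ which moreover preserves $\mathfrak m_R$ --- is not carried out; you acknowledge as much. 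This is not a routine step: the derivation on the normalization of the polynomial ring has no reason a priori to preserve the maximal ideal at the center of $\overline{\nu}$, and the appeal to resolution of singularities and local uniformization is a placeholder, not an argument. As it stands the proof is incomplete precisely where, as you say, ``the commutative algebra is concentrated.''

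The paper closes exactly this gap by a more elementary and explicit device that avoids derivation-stable rings altogether. One writes the minimal polynomial $\overline{p}$ of the new residue element over $\mathbb C(\overline{x_{n-k+1}},\dots)$, lifts its coefficients through the chosen lifts $x_j$ to get a polynomial $p$ over $A_{\nu}$, passes to a finite valued extension $(L,\nu')$ where $p$ splits, and checks by the standard two-terms-of-equal-value argument that every root is a $\nu'$-unit; choosing the root with the correct residue, one computes $d^1y$ by implicit differentiation, $d^1y=-\sum_i y^i\,d^1a_i(\underline x)/p'(y)$. All coefficients of this expression lie in $A_{\nu'}$, the denominator $p'(y)$ is a unit because its residue $\overline{p}'(\overline y)\neq 0$ in characteristic zero, and the residue of the relevant coefficient is nonzero because otherwise the differentials of a transcendence basis of $\kappa(\nu)/\mathbb C$ would be linearly dependent. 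This gives $\nu'(J)=0$ directly, with the compatibility of residues and partial derivatives built into the explicit formula rather than deduced from a structural statement about the local ring. If you replace your local-uniformization sketch by this minimal-polynomial computation, your proof coincides with the paper's.
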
 
\begin{proof}
First, we show that for a fixed $\mathbb Q$-basis $t_1,...,t_{n-k}$\, and a fixed transcendence basis $\overline{x_{n-k+1}},...,\overline{x_n}$\, the value $\nu(\omega)$\, is independend of the choosen lifts $x_{n-k+1},...,x_n\in A_{\nu}$\,. There is an Abhyankar place $\mu\in R_{Ab}^{k,k}(\kappa(\nu)/\mathbb C)$\, such that $\overline{x_{n-k+1}},...,\overline{x_n}\in \kappa(\nu)$\, is a $\mathbb Q$-basis for $\mu$\,. Then, as above, arbitrary lifts $x_{n-k+1},...,x_n$\, plus an arbitrary $\mathbb Q$-basis $t_1,...,t_{n-k}$\, for $\nu$\, form a $\mathbb Q$-basis for $\nu\circ \mu$\,. By \prettyref{thm:T49} the value $\nu\circ \mu(\omega)$\, is independend of these choices. Associated to the composed valuation  $\nu\circ\mu$,\, there is the exact sequence
$$(*)\,\,0\longrightarrow \Gamma_{\mu}\stackrel{j}\longrightarrow \Gamma_{\nu\circ \mu}\stackrel{p}\longrightarrow \Gamma_{\nu}\longrightarrow 0,$$
 Thus $\nu(\omega)=p(\nu\circ \mu(\omega))$\, and we get that the value $\nu(\omega)$\, is independent of the choice of the lifts of our transcendence basis $\overline{\underline{x}}$\,. This is what we wanted to show.\\
We then show the independence of $\nu(\omega)$ of the choosen transcendence basis $\overline{x_{n-k+1}},...,\overline{x_n}$\,.\\ By the exchange principle for transcendence basis for function fields, given two such basis 
$$\overline{x_{n-k+1}},...,\overline{x_n}\,\,\mbox{and}\,\, \overline{y_{n-k+1}},...,\overline{y_n}$$ we may assume by a reordering that $\overline{y_{n-k+j}}=\overline{x_{n-k+j}}$\, for $j=2,...,k$\,. Then $\overline{y_{n-k+1}}$\, is algebraic over $\mathbb C(\overline{x_{n-k+1}},...,\overline{x_n})$\, that is, there is a polynomial equation 
$$(\overline{*})\,\,\overline{p}(\overline{y_{n-k+1}})=\overline{y}_{n-k+1}^m+a_{m-1}(\overline{\underline{x}})\cdot \overline{y}_{n-k+1}^{m-1}+...+a_1(\overline{\underline{x}})\cdot \overline{y}_{n-k+1}+a_0(\overline{\underline{x}})=0,$$
 where 
 $$a_i(\overline{x}_{n-k+1},...\overline{x}_n)\in \mathbb C((\overline{x}_{n-k+1},...\overline{x}_n)),\,\, i=0,1,...,m-1$$ are rational functions in the algebraically independent variables $\overline{x}_j, j=n-k+1,...,n$\,.\\ We fix arbitrary lifts $x_{n-k+1},...,x_n\in A^*_{\nu}\subset K$\, of $\overline{x}_{n-k+1},...,\overline{x}_n$\, and use these lifts to lift the polynomial $\overline{p}$\, to a polynomial 
 $$(*)\,\,p(y_{n-k+1})=y_{n-k+1}^m+a_{m-1}(\underline{x})\cdot y_{n-k+1}^{m-1}+...+a_1(\underline{x})\cdot y_{n-k+1}+a_0(\underline{x})=0,$$ where the coefficients are now in $A_{\nu}^{*}$\,. Let $L/K$\, be the finite algebraic field extension obtained by adjoining all solutions of $p$ to $K$. By elementary valuation theory, there is an Abhyankar place $\nu'\in R_{Ab}(L/\mathbb C)$\, of the same rank, rational rank and the same dimension as $\nu$\, such that $\nu'\mid_K=\nu$\,. By a standard arguement, there must be $i\neq j$\, such that 
 $$\nu'(a_i(\underline{x})\cdot y_{n-k+1}^i)=\nu'(a_j(\underline{x})\cdot y_{n-k+1}^j),$$ i.e., 
 $$\nu'(y_{n-k+1}^{i-j})=\nu(a_j(\underline{x}))-\nu(a_i(\underline{x}))=0,$$ since the residues of $a_i(\underline{x})$\, and $a_j(\underline{x})$\, in $\kappa(\nu)$\, are nonzero rational functions in $\mathbb C((\overline{\underline{x}}))$\,. Thus $\nu'(y_{n-k+1})=0$\, and $y_{n-k+1}\in A_{\nu'}^*$\, for each solution $y_{n-k+1}$\, of $p$. It follows, that if we take the residues of the $m$ solutions of $p$, we get the $m$ (different) solutions of $\overline{p}$\, and we choose $\xi=y_{n-k+1}$\, in such a way that $\overline{\xi}=\overline{y}_{n-k+1}$\,. It follows that from
$$\Omega^1(L/\mathbb C)=\Omega^1(K(X)/\mathbb C)\otimes_{K(X)}L\cong\sum_{j=n-k+1}^nL\cdot d^1x_j$$
 there is a presentation
$$d^1y_{n-k+1}=\sum_{j=n-k+1}^{n}f_j\cdot d^1x_j\in \Omega^1(L/\mathbb C),\,\, f_j\in L.$$
  We now calculate the coefficients $f_j\in L$\, of the differential $d^1y_{n-k+1}$\,. From $(*)$\,, we get
 \begin{gather*}
 d^1y_{n-k+1}^m+d^1(a_{m-1}(\underline{x})\cdot y_{n-k+1}^{m-1})+...+d^1(a_1(\underline{x})\cdot y_{n-k+1})+d^1a_0(\underline{x})=0;\\
 m\cdot y_{n-k+1}^{m-1}\cdot d^1y_{n-k+1}+(m-1)\cdot y_{n-k+1}^{m-2}\cdot a_{m-1}(\underline{x})\cdot d^1y_{n-k+1}+....\\+a_1(\underline{x})\cdot d^1y_{n-k+1}=
 -y_{n-k+1}^{m-1}\cdot d^1a_{m-1}(\underline{x})-...-y_{n-k+1}\cdot d^1a_1(\underline{x})-d^1a_0(\underline{x});\\
 p'(y_{n-k+1})\cdot d^1y_{n-k+1}=-\sum_{i=0}^{m-1}y_{n-k+1}^i\cdot d^1a_i(\underline{x})\\
 \mbox{and}\\
 d^1y_{n-k+1}=\frac{-\sum_{i=1}^{m-1}y_{n-k+1}^i\cdot d^1a_i(\underline{x})}{p'(y_{n-k+1})}.
 \end{gather*} 
 The functions $x_{n-k+1},...,x_n$\, are algebraically independent and we can write 
 $$d^1a_i(\underline{x})=\sum_{j=n-k+1}^nb_{ij}d^1x_j,\,\, b_{ij}\in \mathbb C((\underline{x}))\subset A_{\nu},\,\, i=0,..., m-1.$$
 Inserting this into the last equation, we get 
 \begin{gather*}(**)\,\,d^1y_{n-k+1}=-\sum_{i=0}^{m-1}\frac{\sum_{j=n-k+1}^nb_{ij}d^1x_j}{p'(y_{n-k+1})}=\\
 \sum_{j=n-k+1}^n(\sum_{i=0}^m\frac{-b_{ij}}{p'(y_{n-k+1})})d^1x_j.
 \end{gather*}
 Thus we get with the above notation that 
 $$f_j=\sum_{i=0}^m\frac{-b_{ij}}{p'(y_{n-k+1})}.$$
 The residue of the polynomial $p'(y_{n-k+1})$\, is just $\overline{p}'(\overline{y_{n-k+1}})\in \kappa(\nu)$\, which cannot be zero since $\overline{p}$\, was choosen to be a minimal irreducible polynomial equation for $\overline{y_{n-k+1}}$\, and we are in characteristic zero.  We thus obtain that the differential $d^1y_{n-k+1}$\, is an $L$-linear combination of the differentials $d^1x_j$\, with coefficients in $A_{\nu'}$\,. The equation $(**)$\, then holds in the $A_{\nu'}$-module $\Omega^1(A_{\nu'}/\mathbb C)$\, and we can take residues under the natural homomorphism
 $$\Omega^1(A_{\nu'}/\mathbb C)\twoheadrightarrow \Omega^1(\kappa(\nu')/\mathbb C),$$ and get 
 $$ (\overline{++})\,\,d^1\overline{y_{n-k+1}}=\sum_{j=n-k+1}^n\overline{f_j}\cdot d^1\overline{x_j}.$$ 
 Turning back to our original question of the independence of the choosen transcendence basis $\overline{\underline{x}}$\,, we have replaced $\overline{x_{n-k+1}}$\, by $\overline{y_{n-k+1}}$\, and have choosen appropriate arbitrary lifts $x_{n-k+1},...,x_n\in  A^*_{\nu}\subset K$\, and $y_{n-k+1}\in A^*_{\nu'}$\, which we are free to do so by the first step of the proof. In the representation of (+) of our rational top differential form $\omega$\, we replace $x_{n-k+1}$\, by $y_{n-k+1}$\, and write 
 $$(+')\,\,\omega=g\cdot d^1t_1\wedge ...\wedge d^1t_{n-k}\wedge d^1y_{n-k+1}\wedge d^1x_{n-k+2}\wedge ...\wedge d^1x_n$$ and we get that $g\cdot \partial^1y_{n-k+1}/\partial^1x_{n-k+1}=f$\,. But by our above calculations, 
 $$\partial^1y_{n-k+1}/\partial^1x_{n-k+1}=f_{n-k+1}\in A_{\nu'}.$$
 In the equation $(\overline{++})$\,, the coefficient $\overline{f_{n-k+1}}$\, cannot be zero, since then, we had a $\kappa(\nu)$-linear dependence relation of the differentials 
 $$d^1\overline{y_{n-k+1}}, d^1\overline{x_{n-k+2}},...,d^1\overline{x_n},$$
  contradictory to the fact that $\overline{y_{n-k+1}},\overline{x_{n-k+2}},...,\overline{x_n}$\, are a transcendence basis for $\kappa(\nu)/\mathbb C$\,. Hence $\partial^1y_{n-k+1}/\partial^1x_{n-k+1}=f_{n-k+1}$\, is a unit in $A_{\nu'}$\, and we have $\nu'(f_{n-k+1})=0$\,. Thus the value of $\nu(\omega)=\nu'(\omega)$\, keeps unchanged if we pass from the presentation $(+)$\, to the presentation $(++)$\,. This is what we wanted to prove. \\
  We isolate the lastly proven facts in a separate 
\begin{lemma}\mylabel{lem:L52} With notation as above, let $\nu\in R_{Ab}(K/\mathbb C)$\, be an Abhyankar place of dimension $n-k,$\, $t_1,...,t_k$\, be a $\mathbb Q$-basis for $\Gamma_{\nu}$\, and $\overline{x_{k+1}},...,\overline{x_n}$\, and $\overline{y_{k+1}},...,\overline{y_n}$\, be two transcendence basis for $\kappa(\nu)/\mathbb C$\,. Choose arbitrary lifts $x_{k+1},...,x_n\in A_{\nu}^*$\, of $\overline{x_{k+1}},...,\overline{x_n}$\,. Then, fix minimal irreducible polynomials 
$$\overline{f_j}(\overline{y_j})=\overline{y_j}^{m_j}+ a_{m_j-1}(\overline{\underline{x}})\cdot \overline{y_j}^{m_j-1}+...+a_1(\overline{\underline{x}})\cdot \overline{y_j}+a_0(\overline{\underline{x}})=0,$$ and lift the equations $\overline{f_j}$\, via the lifts $x_j, j=k+1,...,n$\, to polynomial equations
$$f_j(y_j)=y_j^{m_j}+a_{m_j-1}(\underline{x})\cdot y_j^{m_j-1}+...+a_1(\underline{x})\cdot y_j+a_0(\underline{x})=0,$$ Then, after passing to a finite valued algebraic field extension $(L,\nu')/(K,\nu)$\, where the polynomials $f_j$\, completely decompose, we choose solutions $\xi_j=y_j$\, of $f_j$\, such that $\overline{\xi_j}=\overline{y_j}.$\, If we write a given top differential form $\omega\in \Lambda^n\Omega^1(K/\mathbb C)\subset \Lambda^n\Omega^1(L/\mathbb C)$\, as 
\begin{gather*}\omega=f\cdot d^1t_1\wedge ...\wedge d^1t_k\wedge d^1x_{k+1}\wedge ...\wedge d^1x_n\,\,\text{and}\\
\omega=\text{Jac}(\underline{t}\cup\underline{x}/\underline{t}\cup\underline{y})\cdot f\cdot d^1t_1\wedge ...\wedge d^1t_k\wedge d^1y_{k+1}\wedge ...\wedge d^1y_n,
\end{gather*}
then $\nu'(\text{Jac}(\underline{t}\cup\underline{x}/\underline{t}\cup\underline{y}))=0$\, and we have 
\begin{gather*}\overline{\text{Jac}(\underline{t}\cup\underline{x}/\underline{t}\cup\underline{y})}=\\
\text{Jac}(\overline{\underline{x}}/\overline{\underline{y}})\in \kappa(\nu')=\kappa(\nu).
\end{gather*}
\end{lemma}
\begin{proof} The proof follows easily from the preceeding arguements and by applying the exchange principle.
\end{proof}
We finally show that the value $\nu(\omega)$\, is independent of the choosen $\mathbb Q$-basis $(t_1...,t_{n-k})$\,. We choose an arbitrary Abhyankar place $\mu\in R_{Ab}(\kappa(\nu)/\mathbb C)$\, of dimension zero. By the Abhyankar equality it has rational rank $k$\,. We form the composed valuation $\nu\circ \mu\in R_{Ab}(K(X)/\mathbb C)$\, which is then of dimension zero and, by the additivity of the  rank of rational rank $n$\,. By the fundamental exact sequence
$$(*)\,\,0\longrightarrow \Gamma_{\mu}\stackrel{j}\longrightarrow \Gamma_{\nu\circ \mu}\stackrel{p}\longrightarrow \Gamma_{\nu}\longrightarrow 0,$$ the values $\nu\circ \mu(t_1),\nu\circ\mu(t_2),...,\nu\circ \mu(t_{n-k})$\, are $\mathbb Q$-independend in $\Gamma_{\nu\circ \mu}$\,. Take an arbitrary $\mathbb Q$-basis $\overline{x_{n-k+1}},...,\overline{x_n}\in \kappa(\nu)$\, for the valuation $\mu$\,. Then, by well known valuation theory, they form a transcendence basis for $\kappa(\nu)/\mathbb C$\,. Choose arbitrary lifts $x_{n-k+1},...,x_n\in A_{\nu\circ\mu}\subset A_{\nu}$\,. We are free to choose this particular transcendence basis and these particular lifts in our presentation of $\omega$\, by the first two steps of our proof. From the fundamental exact sequence and elementary linear algebra it follows that $t_1,...,t_{n-k},x_{n-k+1},...,x_n$\, form a $\mathbb Q$-basis for $\Gamma_{\nu\circ \mu}$\,. By \prettyref{thm:T48} and \prettyref{thm:T49} it follows that the value 
$$\nu\circ \mu(\omega)=\nu\circ\mu(f)+\sum_{i=1}^{n-k}\nu\circ\mu(t_i)+\sum_{j=n-k+1}^n\nu\circ\mu(x_j)\in \Gamma_{\nu\circ\mu}$$ is independent of the $\mathbb Q$-basis $(t_1,...,t_{n-k},x_{n-k+1},...,x_n)$\,. Then, $p(\nu\circ\mu(\omega))\in \Gamma_{\nu}$\, is independent of all choices. But this is precisely 
$$\nu(f)+\nu(t_1)+...+\nu(t_{n-k})=\nu(\omega)$$
 since $x_{n-k+1},...,x_n$\, are units in $A_{\nu}$\, and thus $p(\nu\circ \mu(x_j))=\nu(x_j)=0$\,. Thus  we have finally shown that the value $\nu(\omega)$\, is independent of the $\mathbb Q$-basis $t_1,...,t_{n-k}$\, for $\nu$\,.\\
\end{proof}
\begin{example} Let $X$ be an integral normal variety of dimension $n$\, and $x\in X$\, a closed point such that the local ring $\mathcal O_{X,x}$\, is regular. Let $(r_1,r_2,...,r_n)$\, be a regular sequence in $\mathfrak{m}_x$\, such that $\mathfrak{m}_x=(r_1,r_2,...,r_n)$\, and put $X'=\mbox{Bl}_xX$\, with exceptional prime divisor $E\subset X'$\,. We want to calculate 
$$\nu_E(d^1r_1\wedge d^1r_2\wedge ...\wedge d^1r_n)$$
 according to our valuation formula. We have $\nu_E(r_i)=1$\,. Put $r_i=r_1\cdot \frac{r_i}{r_1}, \forall i\geq 2$\,. Observe that the residues $\overline{r_i/r_1}\in \kappa(\nu_E); i=2,...,n$\, form a transcendence basis over $\mathbb C$\,, by the regularity assumption on our sequence $(r_1,r_2,...,r_n)$\,. We calculate
\begin{gather*}d^1r_1\wedge d^1r_2\wedge ...\wedge d^1r_n=d^1r_1\wedge d^1(r_1\cdot r_2/r_1)\wedge d^1(r_1\cdot r_3/r_1)\wedge ...\wedge d^1(r_1\cdot r_n/r_1)\\
=d^1r_1\wedge ((r_2/r_1)d^1r_1+r_1d^1(r_2/r_1))\wedge ...\wedge ((r_n/r_1)d^1r_1+r_1d^1(r_n/r_1))\\
=r_1^{n-1}d^1r_1\wedge d^1(r_2/r_1)\wedge ...\wedge d^1(r_n/r_1).
\end{gather*}
The last expression of $\omega$\, is well adapted for our purpose. We may take $r_1$ as a local parameter (``a $\mathbb Q$-basis for $\Gamma_{\nu_E}\cong \mathbb Z$\,) for $A_{\nu_E}$\, and $(\overline{r_2/r_1},...,\overline{r_n/r_1})$\, as a transcendence basis for $\kappa_{\nu_E}/\mathbb C$\,. We have $\nu(r_1)=1$\, and thus 
$$\nu(\omega)=\nu(r_1^{n-1}d^1r_1\wedge d^1(r_2/r_1)\wedge ...\wedge d^1(r_n/r_1))=(n-1)\cdot \nu(r_1)+\nu(r_1)=n.$$
We see that, in disaccordance with the classical thruth that $\nu(\omega)=n-1$\, we get $\nu(\omega)=n$\,.
\end{example}
\begin{example}\mylabel{ex:E49} Let $X$ be a normal integral scheme and $x\in X$\, a closed point, $\mathfrak{q}=(x_1,...,x_n)$\, be a parameter ideal in the local ring $\mathcal O_{X,x}$\,. Let $X^n$\, be the normalization of the blowing up $X':=\text{Bl}_{\mathfrak{q}}X$\,. Let $E_1,...,E_k$\, be the prime divisors of the exceptional locus of $p^n: X^n\longrightarrow X$\, with corresponding discrete algebraic valuations $\nu_i, i=1,...,k$\,.Let 
$$\omega=d^1x_1\wedge d^1x_2\wedge ...\wedge d^1x_n\in \Lambda^n\Omega^1(K(X)/\mathbb C).$$ We will prove that for all $i=1,...,k$\, we have $\nu_i(\omega)=\sum_{j=1}^n\nu_i(x_j)$\,.\\
First of all it is known by general commutative algebra that the graded ring 
$\mbox{gr}_{\mathfrak{q}}\mathcal O_{X,x}$\, is isomorphic to the graded ring $A/\mathfrak{q}[T_1,...,T_n]$\, in $n$ indeterminates. The isomorphism sends $\overline{x_j}\in \mathfrak{q}/\mathfrak{q}^2$\, to $T_j, j=1,...,n$\,. The exceptional divisor $E'$\,, which is thus irreducible but nonreduced on $X'$\, is locally given by one of the equations $x_j=0$\,. Let $E_i\subset X^n$\, be one of the components of the exceptional divisors of the morphism $p^n$\,. Then $E_i\longrightarrow E'_{red}$\, is a finite morphism and via the inclusion $\mathfrak{m}_i\subset \mathcal O_{X',E'_{red}}\hookrightarrow \mathcal O_{X^n,E_i}$\, we have $x_j\in \mathfrak{m}_i, j=1,...,n$\,. Since we know that  $\kappa(E'_{red})\cong \mathbb C(X_1,...,X_{n-1})$\, where each $X_j$\, corresponds to the variable, say $\frac{T_j}{T_n}$\,which is the residue of $\frac{x_j}{x_n}\in \mathcal O_{X',E'_{red}}$\, we see that for $j=1,...,{n-1}$\, the algebraic functions $\frac{x_j}{x_n}$\, are units in $\mathcal O_{X^n,E_i}$\, and $\frac{T_j}{T_n}=\frac{\overline{x_j}}{\overline{x_n}}$\, form a transcendence basis for $\kappa(E_i)\supset \kappa(E'_{red})$\, for $i=1,...,k$\,. We then have for all $i=1,...,k$\, $\nu_i(x_j)=\nu_i(x_n)$\,. Writing 
\begin{gather*}\omega=d^1x_1\wedge d^1x_2\wedge d^1x_n=\\
x_n^{n-1}\cdot d^1\frac{x_1}{x_n}\wedge d^1\frac{x_2}{x_n}\wedge...\wedge d^1\frac{x_{n-1}}{x_n}\wedge d^1x_n,
\end{gather*}
we get by our valuation formula for Abhyankar places that 
$$\nu_i(\omega)=(n-1)\cdot \nu_i(x_n)+\nu_i(x_n)=n\cdot \nu_i(x_n)=\sum_{j=1}^n\nu_i(x_j).$$
\end{example}
Generalizing the above example, we now give a formula for the valuation of a top differential form on a possibly singular variety, that comprises the valuation of a differential form on exceptional divisors over a local (singular) ring that extends the well known formula for a regular local ring.
\begin{proposition}
\mylabel{prop:P2}
 Let $z_1,...,z_n$\, be a transcendence basis for the function field $K/\mathbb C$\, and let $\nu\in R_{Ab}^{k,-}(K/\mathbb C)$\, be such that the subvector space of $\Gamma_{\nu}\otimes_{\mathbb Z}\mathbb Q$\, spanned by $\nu(z_1),...,\nu(z_n)$\, is equal to $\Gamma_{\nu}\otimes_{\mathbb Z}\mathbb Q$\,. Let it be spanned by, say $\nu(z_1),...,\nu(z_k)$\,. Write $$n_j\nu(z_j)=\sum_{i=1}^k n_{ji}\nu(z_i)\,\,,n_j,n_{ji}\in \mathbb Z, j=k+1,...,n.$$ Assume that the residues in $k_{\nu}$ of  $$\frac{z_j^{n_j}}{\prod_{i=1}^kz_i^{n_{ji}}}, j=k+1,...,n $$ form a transcendence basis for $k_{\nu}$\,. Then $$\nu(dz_1\wedge ...\wedge dz_n)=\nu(z_1....z_n).$$
\end{proposition}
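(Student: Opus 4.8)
The plan is to reduce the statement to the general valuation formula \prettyref{thm:T49} by performing one explicit, ``$\nu$-diagonal'' change of basis.

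\emph{Setting up the data.} By hypothesis $\nu(z_1),\dots,\nu(z_k)$ is a $\mathbb{Q}$-basis of $\Gamma_{\nu}\otimes_{\mathbb{Z}}\mathbb{Q}$, so $(z_1,\dots,z_k)$ is, in the terminology introduced earlier, a $\mathbb{Q}$-basis for $\nu$, and $rr(\nu)=k$. For $j=k+1,\dots,n$ — where $n_j\neq 0$, since otherwise the stated relation among the $\nu(z_i)$ would be vacuous — set
$$w_j:=\frac{z_j^{\,n_j}}{\prod_{i=1}^{k}z_i^{\,n_{ji}}}\in K .$$
Then $\nu(w_j)=n_j\nu(z_j)-\sum_{i=1}^{k}n_{ji}\nu(z_i)=0$, so $w_j\in A_{\nu}^{*}$, and by assumption $\overline{w_{k+1}},\dots,\overline{w_n}\in\kappa(\nu)$ is a transcendence basis of $\kappa(\nu)/\mathbb{C}$. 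Since $z_j$ is algebraic over $\mathbb{C}(z_1,\dots,z_k,w_j)$, the $n$ elements $z_1,\dots,z_k,w_{k+1},\dots,w_n$ generate a subfield of $K$ over which $K$ is algebraic, hence they are a transcendence basis of $K/\mathbb{C}$, and in characteristic zero $d^1z_1,\dots,d^1z_k,d^1w_{k+1},\dots,d^1w_n$ is a $K$-basis of $\Omega^1(K/\mathbb{C})$. Thus the $\mathbb{Q}$-basis $(z_1,\dots,z_k)$ for $\nu$, together with the lifts $w_{k+1},\dots,w_n\in A_{\nu}$ of the transcendence basis $\overline{w_{k+1}},\dots,\overline{w_n}$ of $\kappa(\nu)/\mathbb{C}$, forms an admissible system in the sense of \prettyref{thm:T49}.

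\emph{The change of basis.} Logarithmic differentiation of $z_j^{\,n_j}=w_j\prod_{i=1}^{k}z_i^{\,n_{ji}}$ gives, for $j=k+1,\dots,n$,
$$d^1z_j=\frac{z_j}{n_j}\Bigl(\frac{d^1w_j}{w_j}+\sum_{i=1}^{k}n_{ji}\,\frac{d^1z_i}{z_i}\Bigr).$$
Substituting these into $d^1z_1\wedge\cdots\wedge d^1z_n$ and expanding, any summand using a factor $d^1z_i$ with $i\le k$ contains a repeated $d^1z_i$ alongside $d^1z_1\wedge\cdots\wedge d^1z_k$ and so vanishes; only the all-$d^1w_j$ summand remains, giving
$$d^1z_1\wedge\cdots\wedge d^1z_n=\Bigl(\prod_{j=k+1}^{n}\frac{z_j}{n_j\,w_j}\Bigr)\,d^1z_1\wedge\cdots\wedge d^1z_k\wedge d^1w_{k+1}\wedge\cdots\wedge d^1w_n .$$

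\emph{Reading off the value.} Put $f:=\prod_{j=k+1}^{n}z_j/(n_j w_j)$. Applying \prettyref{thm:T49} to the admissible system above yields $\nu(\omega)=\nu(f)+\sum_{i=1}^{k}\nu(z_i)$; since $n_j\in\mathbb{C}^{*}$ and $\nu(w_j)=0$ we have $\nu(f)=\sum_{j=k+1}^{n}\nu(z_j)$, whence $\nu(\omega)=\sum_{i=1}^{n}\nu(z_i)=\nu(z_1\cdots z_n)$, as claimed. I do not anticipate a genuine obstacle: the substance is (i) verifying that $(z_1,\dots,z_k;w_{k+1},\dots,w_n)$ is admissible for \prettyref{thm:T49}, which is immediate from the two standing hypotheses, and (ii) the wedge bookkeeping in the middle display; everything else is formal, and the whole argument is the natural generalization of \prettyref{ex:E49}.
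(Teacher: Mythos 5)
Your proof is correct, and it rests on the same key lemma as the paper's — rewrite $\omega$ with respect to a $\mathbb Q$-basis for $\nu$ together with unit lifts of a transcendence basis of $\kappa(\nu)/\mathbb C$, then invoke \prettyref{thm:T49} — but your route through the computation is genuinely simpler. The paper first passes to the finite extension $L=K(z_j^{1/n_j})$, extends $\nu$ to a place $\nu'$ of $L$, replaces the $z_j$ by the roots $z_j'$ and the $z_i$ ($i>k$) by units $z_i'=z_i\prod_j(z_j')^{-n_{ij}}$, expands the wedge by the product rule inside $L$, and only then applies \prettyref{thm:T49} to $\nu'$; this forces it to transfer the transcendence-basis hypothesis from the residues of the stated quotients to the residues of their roots. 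You stay entirely inside $K$: the elements $w_j=z_j^{n_j}/\prod_i z_i^{n_{ji}}$ are exactly the units whose residues the hypothesis addresses, so no root extraction, no auxiliary place $\nu'$, and no extension of value groups are needed, and the logarithmic-differentiation identity makes the wedge bookkeeping immediate. The two side conditions that need checking — that $n_j\neq 0$ and that $(z_1,\dots,z_k,w_{k+1},\dots,w_n)$ is a transcendence basis of $K/\mathbb C$, so that the data are admissible for \prettyref{thm:T49} — you have both addressed. What your version gives up is only the explicit intermediate formula in $L$; what it buys is a shorter, self-contained argument in $K$ itself.
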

\begin{proof} We put $L=K(z_j^{\frac{1}{n_j}}, j=1,...,k)$\,. There is an Abhyankar place $\nu'$\, of $L$ above $\nu$\, and we have $\nu'(z_j^{\frac{1}{n_j}})=\frac{\phi_{L/K}(\nu(z_j))}{n_j}$\,, where $\phi_{L/K}: \Gamma_{\nu}\hookrightarrow \Gamma_{\nu'}$\, is the inclusion map of value groups. Replacing $z_j$\, by $z_{j}':=z_j^{\frac{1}{n_j}}, j=1,...,k$\,, we have 
 $$\nu'(z_i)=\sum n_{ij}\nu(z_j'),\,\, i=k+1,...,n.$$
  We put 
  $$z_i'=z_i\prod_{j=1}^k(z_{j}')^{-n_{ij}}\in L,\,\, i=k+1,...,n.$$
   For $1\leq j\leq k$\, we have $d^1z_j=d^1(z_j')^{n_j}=n_j\cdot (z_j')^{n_j-1}d^1z_j'$\, and for $k+1\leq i\leq n$\, we have 
   $$d^1z_i=\prod_{j=1}^k(z_{j}')^{n_{ij}}d^1z_i'+z_i'\sum_kn_{ik}\cdot(\prod_{j\neq k} (z_j')^{n_{ij}})\cdot (z_k')^{n_{ik}-1}\cdot d^1z_k'$$\, by usual differentiation. Hence we get
\begin{gather*}d^1z_1\wedge d^1z_2\wedge ...\wedge d^1z_n\\
=\prod_{j=1}^kn_j\cdot \prod_{j=1}^k(z_j')^{n_j-1}\cdot \prod_{j=1}^k\prod_{i=k+1}^n(z_j')^{n_{ij}}d^1z_1'\wedge d^1z_2'\wedge ...\wedge d^1z_n'\\
=\prod_{j=1}^kn_j\cdot \prod_{j=1}^k(z_j')^{n_j-1+\sum_{i=k+1}^nn_{ij}}\cdot d^1z_1'\wedge ...\wedge d^1z_k'\wedge d^1z_{k+1}'\wedge ...\wedge d^1z_n'.
\end{gather*}
We apply \prettyref{thm:T49}. Obviously, $z_1',...,z_k'$\, form a $\mathbb Q$-basis for $\nu'$\, and $z_{k+1}',...,z_n'$\, form by assumption a transcendence basis for $\kappa(\nu)$\, and also for $\kappa(\nu')$\,. The valuation formula for $\omega=d^1z_1\wedge ...\wedge d^1z_n$\, then gives
\begin{gather*}
\nu(\omega)=\nu'(\prod_{j=1}^k(z_j')^{n_j-1+\sum_{i=k+1}^nn_{ij}})+\sum_{j=1}^k\nu'(z_j')\\
=\sum_{j=1}^k(n_j-1)\nu'(z_j')+\sum_{j=1}^k\nu'(z_j')+\sum_{j=1}^k\nu'((z_j')^{\sum_{i=k+1}^nn_{ij}})\\
=\sum_{j=1}^k\nu'((z_j')^{n_j})+\nu'(\prod_{i=k+1}^n\prod_{j=1}^k(z_j')^{n_{ij}})\\
=\sum_{j=1}^k\nu'(z_j)+\sum_{i=k+1}^n\nu'(z_i)+\sum_{i=k+1}^n\nu'(\frac{\prod_{j=1}^k(z_j')^{n_{ij}}}{z_i})\\
=\sum_{j=1}^n\nu'(z_j)=\nu(z_1\cdot ...\cdot z_n)
\end{gather*}
which was to be proved. Observe, that by assumption $\nu'(\prod_{j=1}^k(z_j')^{n_{ij}})=\nu(z_i)$\, such that the value of the fractions is zero.
Finally we give the comparison between the classical valuation $\nu_E^{cl}(\omega)$\, of rational top differential form $\omega\in \Lambda^n\Omega^1(K(X)/\mathbb C)$\, and our valuation $\nu_E(\omega)$\, viewing $\nu_E$\, is an Abhyankar place in $R^{1,1}_{Ab}(K(X)/\mathbb C)$\,.
\end{proof}
\begin{lemma}\mylabel{lem:L1966} With notation as above, there is always an equality
$$\nu_E^{cl}(\omega)+1=\nu_E(\omega).$$
\end{lemma}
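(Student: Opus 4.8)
The plan is to pass to a model on which $\nu_E$ has divisorial center and to read off both valuations from one and the same well-chosen local frame. So let $X\in\mbox{Mod}(K(X)/\mathbb C)$ be a model with $\dim c_X(\nu_E)=\dim X-1$ and put $A:=\mathcal O_{X,\eta_E}$, so that $A$ is a discrete valuation ring with $A=A_{\nu_E}$, valuation $\nu_E$, uniformizer a chosen $t$ (thus $\nu_E(t)=1$), residue field $\kappa(E)=\kappa(\nu_E)$, and $\Gamma_{\nu_E}\cong\mathbb Z$ with $\mathbb Q$-basis $t$. Since $X$ is normal, $A$ is a regular local ring essentially of finite type over the perfect field $\mathbb C$, hence smooth over $\mathbb C$; in particular $\Omega^1(A/\mathbb C)$ is free of rank $n=\dim X$, and $\Lambda^n\Omega^1(A/\mathbb C)$ is free of rank one over $A$ and, by regularity of $A$, is the stalk at $\eta_E$ of the canonical reflexive sheaf on $X$ of which $\omega$ is a rational section.

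First I would build an adapted frame. Because $\mathbb C$ has characteristic zero, the conormal sequence
\[0\longrightarrow \mathfrak m_A/\mathfrak m_A^2\longrightarrow \Omega^1(A/\mathbb C)\otimes_A\kappa(E)\longrightarrow \Omega^1(\kappa(E)/\mathbb C)\longrightarrow 0\]
is exact (left exactness because $A$ and $\kappa(E)$ are both smooth over $\mathbb C$), with $\mathfrak m_A/\mathfrak m_A^2$ one-dimensional generated by the class of $d^1t$, and $\Omega^1(\kappa(E)/\mathbb C)$ free of rank $n-1$. Pick a transcendence basis $\overline{x_2},\dots,\overline{x_n}$ of $\kappa(E)/\mathbb C$; then $d^1\overline{x_2},\dots,d^1\overline{x_n}$ is a $\kappa(E)$-basis of $\Omega^1(\kappa(E)/\mathbb C)$. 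Choose lifts $x_2,\dots,x_n\in A$; each $\overline{x_i}$ is nonzero, so each $x_i$ is a unit in $A$ and $\nu_E(x_i)=0$. By the exact sequence, the classes of $d^1t,d^1x_2,\dots,d^1x_n$ form a $\kappa(E)$-basis of $\Omega^1(A/\mathbb C)\otimes_A\kappa(E)$; by Nakayama they form an $A$-basis of the free module $\Omega^1(A/\mathbb C)$, hence a $K(X)$-basis of $\Omega^1(K(X)/\mathbb C)$, and $d^1t\wedge d^1x_2\wedge\cdots\wedge d^1x_n$ is a local generator of the canonical reflexive sheaf at $\eta_E$.

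Then the comparison is immediate. Write $\omega=g\cdot d^1t\wedge d^1x_2\wedge\cdots\wedge d^1x_n$ with $g\in K(X)$. Since $d^1t\wedge d^1x_2\wedge\cdots\wedge d^1x_n$ generates the canonical sheaf at $\eta_E$, the coefficient of $E$ in $K_X^{\omega}$ is $\nu_E(g)$, i.e.\ $\nu_E^{cl}(\omega)=\nu_E(g)$. On the other hand, $t$ is a $\mathbb Q$-basis for $\nu_E$ (as $\Gamma_{\nu_E}\cong\mathbb Z$) and $x_2,\dots,x_n$ are lifts of the transcendence basis $\overline{x_2},\dots,\overline{x_n}$ of $\kappa(\nu_E)/\mathbb C$, so \prettyref{thm:T49} applies to this very presentation of $\omega$ and gives $\nu_E(\omega)=\nu_E(g)+\nu_E(t)=\nu_E(g)+1$. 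Subtracting the two identities yields $\nu_E(\omega)=\nu_E^{cl}(\omega)+1$.

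The one point that really needs care is the construction of the adapted frame — equivalently, the assertion that a uniformizer together with lifts of a transcendence basis of the residue field forms an $A$-module basis of $\Omega^1(A/\mathbb C)$, not merely a $K(X)$-vector space basis (so that the corresponding top wedge generates the canonical sheaf at $\eta_E$, rather than differing from a generator by a non-unit). This is exactly where smoothness of $A$ over $\mathbb C$, the left-exact conormal sequence, and Nakayama's lemma enter. Everything else reduces to the definition of $\nu_E^{cl}$ through $K_X^{\omega}$ and a direct invocation of \prettyref{thm:T49}.
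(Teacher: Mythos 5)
Your proof is correct and follows essentially the same route as the paper's: write $\omega$ in a frame consisting of a local equation of $E$ together with lifts of a transcendence basis of $\kappa(E)$, check that the corresponding top wedge generates the canonical sheaf along $E$ so that $\nu_E^{cl}(\omega)=\nu_E(g)$, and then read off $\nu_E(\omega)=\nu_E(g)+1$ from \prettyref{thm:T49}. The only (harmless) difference is that the paper verifies the generating property at a closed point of $E$ on a smooth model via a full regular system of parameters, whereas you work at the generic point $\eta_E$ and use the conormal sequence plus Nakayama.
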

\begin{proof} Fix the discrete algebraic rank one valuation $E$.\, Find a smooth complete model $X\in \mbox{Mod}(K(X)/\mathbb C)$\, such that $\nu_E$\, has divisorial center on $X$, such that  $\overline{c_X(\nu_E)}=E\subset X$\, is nonsingular. Fix a closed point $x\in E$\, and let $r_1=0$\, be a local equation for $E$ at $x\in X$\,. Complete $r_1$\, to a regular sequence $(r_1,r_2,...,r_n)\in \mathfrak{m}_x$\, generating $\mathfrak{m}_x$\,. Then $r_2,...,r_n$\, are units in $\mathcal O_{X,E}$\, and its residues $\overline{r_2},..., \overline{r_n}$\, form a transcendence basis in the reside field $\kappa(E)$. 
Write $\omega=f\cdot d^1r_1\wedge d^1r_2\wedge ...\wedge d^1r_n$\,. Then, the rational section
$$d^1r_1\wedge d^1r_2\wedge ...\wedge d^1r_n\in \mathcal O_X(K_X)$$
is regular around $x\in X$\, and there are no zeros in an open  neighbourhood  $U$\, of $x\in X$\,. Then $\mbox{div}_U(\omega)=\mbox{div}(f)$\, and 
$$\nu_E^{cl}(\omega)=\mbox{div}_{\mathcal O_{X,E}}(\omega)=\mbox{div}(f).$$
We may apply our valuation formula and find 
$$\nu_E(\omega)=\nu_E(f)+\nu_E(r_1)=\nu_E(f)+1$$
since $r_1$\, is a local parameter in $\mathfrak{m}_{X,E}$\,.
\end{proof}
\begin{remark} We might thus say that our valuation of rational top differential forms at Abhyankar places is a log valuation.
\end{remark}
\section{The generalized Poincar$\acute{\mbox{e}}$-residue map for  Abhyankar places}
The adjunction setting consists classically of a log pair $(X,D)$\, and a prime divisor $S\subset X$ such that $a(X,D,\nu_S)=0$\,. One asks for a divisor $D_S\subset S$\,, called the different of $D$ on $S$ such that $K_S+D_S= (K_X+D)\mid_S$\,, which is the adjunction formula.\\
In order to make sense of the adjunction formula, we have to fix a canonical rational top differential form $\omega$\, on $K(X)/\mathbb C$\, such that $\omega$\, has a simple pole along $S$ , and have to restrict $\omega$\, to a canonical rational top differential form $\overline{\omega}$\, on $K(S)/\mathbb C$\,. In case $S$ is normal, one then defines $$D_S:=(K_X^{\omega}+D)\mid_S- K_S^{\overline{\omega}}.$$
In order to define $\overline{\omega}$\, on $K(S)$\, we may assume that $X$ and $S\subset X$ are  smooth in a neighbourhood around the generic point of $S$.\\
Now, if $S\subset X$\, is a smooth divisor on a smooth variety it is well known that there is an isomorphism $$\mathcal O_S(K_S)\cong \mathcal O_X(K_X+S)\mid_S(*)$$ obtained from taking top exterior powers in the cotangential sequence 
$$0\longrightarrow \mathcal O_X(-S)/\mathcal O_X(-2S)\longrightarrow \Omega^1_X\mid_S\longrightarrow \Omega^1_S\longrightarrow 0.$$  We want to clarify, where a top differential form with a simple pole along $S$ is mapped to in $\mathcal O_S(K_S)$\, via the inverse of the above isomorphism. \\
Choosing locally an etale morphism $f:(X,x)\longrightarrow (\mathbb A^n,0)$\, such that $S$ is given by $f^{\sharp}(x_1)=0$\,, we are reduced to the case $X=\mathbb A^n$\, and $S$ being the coordinate hyperplane $(x_n=0)$\, in $\mathbb A^n_k$\,.\\
 From the isomorphism  
 $$\Lambda^n(\Omega^1(\mathbb A^n_k/k))\otimes_{k[x_1,...,x_n]}\mathcal O_S(S)\cong \Lambda^{n-1}(\Omega^1(\mathbb A^{n-1}_k/k))$$  we find that a rational top differential form  $$\omega=f\cdot d^1x_1\wedge d^1x_2\wedge ...\wedge d^1x_n$$\, has to be written as a tensor product $$(fx_n)\cdot d^1x_1\wedge ...\wedge d^1x_{n-1}\wedge d^1x_n\otimes \frac{1}{\overline{x_n}},$$ where $\frac{1}{\overline{x_n}}$\, is to be viewed as a local section of $\mathcal O_S(S)$\,. Then via the inverse of the isomorphism
 $$(*)\,\,\Lambda^{n-1}\Omega^1(S/k)\otimes \mathcal O_S(-S)\cong \Lambda^n\Omega^1(X/k)\mid S,$$ where 
 $$\overline{f}d^1x_1\wedge ...\wedge d^1x_{n-1}\otimes g\,\, \text{is sent to}\,\, fd^1x_1\wedge ...\wedge d^1x_{n-1}\wedge d^1g,$$ the rational top differential form $\omega$  is then  sent to 
 $$\overline{\omega}:=\overline{fx_n}\cdot d^1\overline{x_1}\wedge ...\wedge d^1\overline{x_{n-1}}$$ being a rational top differential form on $\mathbb A_k^{n-1}$\,.\\ This will be the starting point for our considerations .\\
We start this section with an auxiliary lemma about Kaplansky embeddings of algebraic function fields into generalized Laurent series fields of composed valuations.
\begin{lemma}\mylabel{lem:L51} Let $K(X)/\mathbb C$\, be an algebraic function field of transcendence degree $n>1$\, and $\nu\circ \mu\in R_{Ab}^n(K(X)/\mathbb C)$\, a composed Abhyankar place of dimension $n$ with $\mu\in R_{Ab}^{n-k}(\kappa(\nu)/\mathbb C)$\,. Then, there is a commutative diagram of embeddings of valued fields
\begin{equation*}
\begin{CD} A_{\nu\circ \mu}@>>> A_{\nu}@>\Phi>> A_{\nu_{can}}@>>>\mathbb C((\Gamma_{\nu\circ \mu})) \\
          @VV\mbox{res}V      @VV\mbox{res}V              @VV\mbox{res}V\\
         A_{\mu} @>>> \kappa(\nu) @>\phi>>      \mathbb C((\Gamma_{\mu})),
\end{CD}
\end{equation*}
where $A_{\nu_{can}}$\, denotes the  valuation ring inside $\mathbb C((\Gamma_{\nu\circ \mu}))$\, of the valuation 
$$\nu_{can}: \mathbb C((\Gamma_{\nu\circ\mu}))\longrightarrow \Gamma_{\nu\circ \mu}\twoheadrightarrow \Gamma_{\nu}.$$
The same holds true for the algebraic closure $\overline{K(X)}/\mathbb C$\, and composed Abhyankar places in $R_{Ab}^n(\overline{K(X)}/\mathbb C)$\,.
\end{lemma}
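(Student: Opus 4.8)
\noindent The plan is to assemble the whole diagram out of a single Kaplansky embedding of the composed valued field, and to read off the two non‑tautological horizontal maps $\Phi$ and $\phi$ as a localization and a residue map, respectively.

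First I would apply the Embedding Theorem of Kaplansky, \prettyref{thm:T2}, to $(K(X),\nu\circ\mu)$. Since the rank of $\nu\circ\mu$ equals the transcendence degree $n$ of $K(X)/\mathbb C$, the Abhyankar inequality forces $\dim(\nu\circ\mu)=0$, so \prettyref{thm:T2} gives an embedding of fields over $\mathbb C$, $\Psi\colon K(X)\hookrightarrow\mathbb C((\Gamma_{\nu\circ\mu}))$, carrying $\nu\circ\mu$ to the canonical valuation $w$ of $\mathbb C((\Gamma_{\nu\circ\mu}))$, and with $\Psi(A_{\nu\circ\mu})\subseteq\mathbb C[[\Gamma_{\nu\circ\mu}]]$. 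Next I would bring in the composition data: rank and rational rank are additive under composition (so $\nu$ has rank $k$ and $\mu$ rank $n-k$); one has $A_{\nu\circ\mu}=p^{-1}(A_\mu)$ for the residue map $p\colon A_\nu\twoheadrightarrow\kappa(\nu)$; and the sequence $(**)$, $0\to\Gamma_\mu\to\Gamma_{\nu\circ\mu}\to\Gamma_\nu\to 0$, exhibits $\Gamma_\mu$ as a convex subgroup of $\Gamma_{\nu\circ\mu}$. The key point is that this $\Gamma_\mu$ is exactly the convex subgroup for which \prettyref{prop:P60} produces the prime ideal $p_{\Gamma_\mu}\subset\mathbb C[[\Gamma_{\nu\circ\mu}]]$ with $\mathbb C[[\Gamma_{\nu\circ\mu}]]_{p_{\Gamma_\mu}}=A_{\nu_{can}}$, with residue field $\mathbb C((\Gamma_\mu))$, and with residue valuation induced by $w$ on $\kappa(\nu_{can})=\mathbb C((\Gamma_\mu))$ equal to its canonical valuation $\mu_{can}$.

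Then I would construct $\Phi$ and $\phi$ from $\Psi$. Because $\Psi$ carries $\nu\circ\mu$ to $w$, composing with the quotient $\Gamma_{\nu\circ\mu}\twoheadrightarrow\Gamma_\nu$ yields $\nu_{can}(\Psi(x))=\nu(x)$ for all $x\in K(X)$, whence $\Psi(A_\nu)\subseteq A_{\nu_{can}}$, $\Psi(\mathfrak m_\nu)\subseteq\mathfrak m_{\nu_{can}}$, and $\Psi$ takes units of $A_\nu$ to units of $A_{\nu_{can}}$. I would set $\Phi:=\Psi|_{A_\nu}\colon A_\nu\hookrightarrow A_{\nu_{can}}$, an injective local homomorphism of valuation rings respecting the valuations, for which the composite $A_{\nu\circ\mu}\hookrightarrow A_\nu\xrightarrow{\Phi}A_{\nu_{can}}\hookrightarrow\mathbb C((\Gamma_{\nu\circ\mu}))$ is $\Psi|_{A_{\nu\circ\mu}}$; and I would take $\phi\colon\kappa(\nu)=A_\nu/\mathfrak m_\nu\to A_{\nu_{can}}/\mathfrak m_{\nu_{can}}=\mathbb C((\Gamma_\mu))$ to be the map $\Phi$ induces on residue fields, automatically injective as a homomorphism of fields over $\mathbb C$. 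The middle square commutes by construction of $\phi$, and the left square commutes because $A_{\nu\circ\mu}\hookrightarrow A_\nu$ together with the two residue maps merely restates $A_{\nu\circ\mu}=p^{-1}(A_\mu)$. What remains to verify is that $\phi$ respects $\mu$ and $\mu_{can}$: for a nonzero $\overline x\in\kappa(\nu)$ lifted to $x\in A_\nu$ one has $\nu(x)=0$, so by $(**)$ $\mu(\overline x)=(\nu\circ\mu)(x)\in\Gamma_\mu$; since $w(\Psi(x))=(\nu\circ\mu)(x)$ and the residue map of \prettyref{prop:P60} onto $\mathbb C((\Gamma_\mu))$ simply deletes the monomials whose exponent lies outside $\Gamma_\mu$, hence preserves the minimal exponent when it already lies in $\Gamma_\mu$, one gets $\mu_{can}(\phi(\overline x))=w(\Psi(x))=\mu(\overline x)$; in particular $\phi(A_\mu)\subseteq\mathbb C[[\Gamma_\mu]]$.

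For the algebraic closure I would run the identical argument: a composed Abhyankar place in $R^n_{Ab}(\overline{K(X)}/\mathbb C)$ is again zero‑dimensional with residue field $\mathbb C$, so \prettyref{thm:T2} applies, and \prettyref{prop:P60}, the additivity of rank and rational rank, and the identity $A_{\overline\nu\circ\overline\mu}=p^{-1}(A_{\overline\mu})$ are all valid for arbitrary totally ordered abelian groups. The step I expect to be the main obstacle is precisely the identification just used: that the convex subgroup $\Gamma_\mu$ arising abstractly from the composition $(**)$ is the one \prettyref{prop:P60} localizes $\mathbb C[[\Gamma_{\nu\circ\mu}]]$ at, since the residue field of $A_{\nu_{can}}$ being $\mathbb C((\Gamma_\mu))$, the well‑definedness of $\phi$, the commutativity of the middle square, and the compatibility of $\phi$ with $\mu$ all rest on it.
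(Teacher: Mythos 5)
Your proposal is correct and follows essentially the same route as the paper: both rest on the Kaplansky embedding of the zero-dimensional composed place $\nu\circ\mu$ into $\mathbb C((\Gamma_{\nu\circ\mu}))$, the identification (via \prettyref{prop:P60}) of $A_{\nu_{can}}$ as the localization at the prime corresponding to the convex subgroup $\Gamma_\mu$ with residue field $\mathbb C((\Gamma_\mu))$, and the check on a unit lift that the residue map (deleting monomials with exponent outside $\Gamma_\mu$, i.e.\ setting the $t_i$ to zero in the paper's coordinates) preserves the value. The only cosmetic difference is that the paper carries out the identification with an explicit $\mathbb Z$-basis splitting $\Gamma_{\nu\circ\mu}\cong\Gamma_\nu\oplus\Gamma_\mu$ rather than citing \prettyref{prop:P60} abstractly.
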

\begin{proof} We give the proof in the case of function fields, the proof for the algebraic closure carries over verbatim if one replaces isomorphisms $\Gamma\cong \mathbb Z^n$\, by isomorphisms $\Gamma\cong \mathbb Q^n$\, and works with $\mathbb Q$-basis instead.\\
 Let us describe the valuation ring $A_{\nu_{can}}$\, inside $\mathbb C((\Gamma_{\nu\circ\mu}))$\,. To ease notation, we drop the subscript $(-)_{can}$\, if possible. Fixing a $\mathbb Z$-basis  $t_1,...,t_k$\,for $\nu$\, and lifts $x_{k+1},...,x_n$\, of a $\mathbb Z$-basis $\overline{x_{k+1}},...,\overline{x_n}\in \kappa(\mu)$\,for $\mu$\, we get an isomorphism 
$$(*)\,\,\Gamma_{\nu\circ\mu}\cong \Gamma_{\nu}\oplus\Gamma_{\mu}\cong \mathbb Z^k\oplus \mathbb Z^{n-k}.$$  Using the isomorphism $(*),$\, we can write each $f\in \mathbb C((\Gamma_{\nu\circ \mu}))$\, as 
$$f=f(t_1,...,t_k, x_{k+1},...,x_n)=\sum_{\underline{r}\in \mathbb Z^k, \underline{s}\in \mathbb Z^{n-k}}a_{\underline{r},\underline{s}}\cdot \underline{t}^{\underline{r}}\cdot \underline{x}^{\underline{s}}.$$
Now $f\in A_{\nu}$\, precisely means that the first nonzero coefficient $a_{\underline{r},\underline{s}}$\, has $\underline{r}\in \mathbb Z^k_{\geq 0}\cong \Gamma_{\nu,\geq 0}$\, and this then implies that each nonzero $a_{\underline{r},\underline{s}}$\, has $\underline{r}\in \mathbb Z^k_{\geq 0}$\,. The maximal ideal $\mathfrak{m}_{\nu}\subset A_{\nu}$\, then consists of all generalized Laurent series $f$ with least nonzero $a_{\underline{r},\underline{s}}$\, with $\underline{r}\in \mathbb Z^k_{>0}\cong \Gamma_{\nu,>0}$\,.  The residue homomorphism 
$$A_{\nu}\twoheadrightarrow \kappa(\nu)\cong A_{\nu}/\mathfrak{m}_{\nu}$$
 then identifies as the map obtained by putting all $t_i, i=1,...,k$\, to zero. We obtain a generalized Laurent series in $x_{k+1},...,x_n$\, and an isomorphism $\kappa(\nu)\cong \mathbb C((\Gamma_{\mu}))$\,. \\
Now, there is a canonical injection $j:A_{\nu}\hookrightarrow A_{\nu_{can}}$\, with $j(\mathfrak{m}_{\nu})\hookrightarrow \mathfrak{m}_{\nu_{can}}$\, inducing an injection of fields over $\mathbb C$\,,
$$\kappa(\nu)=A_{\nu}/\mathfrak{m}_{\nu}\hookrightarrow \kappa(\nu_{can})=A_{\nu_{can}}/\mathfrak{m}_{\nu_{can}}.$$ We have to show that this is an inclusion of valued fields. For $\overline{f}\in \kappa(\nu), \overline{f}\neq 0$\,, choose a lift $f\in A_{\nu}^*$. Then $f$ maps to a Laurent series 
$$f(\underline{t},\underline{x})=\sum_{\underline{n}\in \mathbb Z^k,\underline{m}\in \mathbb Z^{n-k}}a_{\underline{n},\underline{m}}\cdot\underline{t}^{\underline{n}}\cdot\underline{x}^{\underline{m}}$$ with the minimal nonzero $a_{\underline{n},\underline{m}}$\, having $\underline{n}=\underline{0}$\, since $f$ was a unit in $A_{\nu}$\,. The series $f(\underline{t},\underline{x})$\, maps to the generalized Laurent series $\overline{f}(\underline{x})$\, obtained by putting each variable $t_i$\, to zero. Then $\mu_{can}(\overline{f}(\underline{x}))$\, is under the isomorphism $(*)$\, equal to $\underline{m}_0$\, where $a_{\underline{m_0}}$\, is the least nonzero coefficient in the Laurent series expansion. Now, the value $\nu\circ \mu(f)$\, lies in the subgroup $\Gamma_{\mu}\subset \Gamma_{\nu\circ \mu}$\, and this value is preserved under the embedding $j$, where $\nu_{can}\circ\mu_{can}(j(f))$\,, which lies in $\Gamma_{\mu_{can}},$\, is just the same as the value $\mu_{can}(\overline{j(f)})$\,. 
\end{proof}
We come now to the main theorem of this section.
\begin{theorem}\mylabel{thm:T11}(Generalized Poincar$\acute{\mbox{e}}$-residue map)\\
 Let $K(X)/\mathbb C$\, be a function field and $\nu\in R_{Ab}(K(X)/\mathbb C)$\, be an Abhyankar place of dimension $n-k<\dim(X)=n$\,. Let $\omega\in \Lambda^n\Omega^1(K(X)/\mathbb C)$\, be a rational top differential form with $\nu(\omega)=0$\,.  We can write 
$$(*)\,\,\omega =f\cdot d^1t_1\wedge d^1t_2\wedge ...\wedge d^1t_k\wedge d^1x_{k+1}\wedge d^1x_{k+2}\wedge ...\wedge d^1x_n$$ such that $(t_1,...,t_k)$\, is a $\mathbb Q$-basis for $\Gamma_{\nu},$\, $\nu(x_{k+1})=\nu(x_{k+2})=...=\nu(x_n)=0$\, and the residues modulo $\mathfrak{m}_{\nu}$\, $\overline{x_{k+1}},\overline{x_{k+2}},...\overline{x_n}$\, form a transcendence basis for $\kappa_{\nu}/\mathbb C$\,. Then the rational top differential form 
$$\overline{\omega}=\overline{ft_1\cdot ...\cdot t_k}\cdot d^1\overline{x_{k+1}}\wedge d^1\overline{x_{k+2}}\wedge ...\wedge d^1\overline{x_n}\in \Lambda^{n-k}\Omega^1(\kappa_{\nu}/\mathbb C)$$ is up to a constant $c\in \mathbb C$\, independent of the choosen representation $(*)$\,.\\
It thus defines a well defined $b$-divisor $\mathcal K^{\overline{\omega}}$\, of the function field $\kappa_{\nu}/\mathbb C$\,.
\end{theorem}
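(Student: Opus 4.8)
The plan is to show that the only ambiguity in $\overline\omega$ is multiplication by a nonzero scalar, by identifying, for every dimension-zero Abhyankar place $\mu$ of $\kappa_\nu/\mathbb C$, the value $\mu(\overline\omega)$ with the value $(\nu\circ\mu)(\omega)$ of $\omega$ under the composed place, which manifestly does not depend on the chosen representation. First one checks that for a single representation $(*)$ the form $\overline\omega$ is a well-defined nonzero element of the one-dimensional $\kappa_\nu$-vector space $\Lambda^{n-k}\Omega^1(\kappa_\nu/\mathbb C)$: since $\nu(\omega)=0$, \prettyref{thm:T49} gives $\nu(f)+\sum_{i=1}^{k}\nu(t_i)=0$, hence $f\,t_1\cdots t_k\in A_\nu^{*}$ and $\overline{f\,t_1\cdots t_k}\in\kappa_\nu^{*}$, while $\overline{x_{k+1}},\dots,\overline{x_n}$ being a transcendence basis of $\kappa_\nu/\mathbb C$ makes $d^1\overline{x_{k+1}}\wedge\cdots\wedge d^1\overline{x_n}$ a basis of $\Lambda^{n-k}\Omega^1(\kappa_\nu/\mathbb C)$.

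The heart of the argument is the identity
\[
(\nu\circ\mu)(\omega)=\mu(\overline\omega)\qquad\text{in }\Gamma_\mu\hookrightarrow\Gamma_{\nu\circ\mu},
\]
valid for every Abhyankar place $\mu$ of $\kappa_\nu/\mathbb C$ of dimension zero (hence of maximal rational rank $n-k$) and every representation $(*)$; here $\nu\circ\mu$ is a dimension-zero Abhyankar place of $K(X)/\mathbb C$ of rational rank $n$, by additivity of rank and rational rank. To prove it, I would first apply \prettyref{lem:L52}: after possibly passing to a finite valued algebraic extension $(L,\nu')/(K(X),\nu)$ with $\kappa(\nu')=\kappa(\nu)$, one replaces the transcendence basis $\overline{x_{k+1}},\dots,\overline{x_n}$ by a $\mathbb Q$-basis $\overline{w_{k+1}},\dots,\overline{w_n}$ for $\mu$ (automatically a transcendence basis of $\kappa_\nu/\mathbb C$), writing $\omega=\widetilde f\cdot d^1t_1\wedge\cdots\wedge d^1t_k\wedge d^1w_{k+1}\wedge\cdots\wedge d^1w_n$ for lifts $w_j\in A_\nu^{*}$. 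By \prettyref{lem:L52} the Jacobian relating the two representations is a $\nu'$-unit whose residue is the Jacobian of the two residue transcendence bases; combined with the change-of-basis formula in $\Lambda^{n-k}\Omega^1(\kappa_\nu/\mathbb C)$ this shows $\overline{\widetilde f\,t_1\cdots t_k}\cdot d^1\overline{w_{k+1}}\wedge\cdots\wedge d^1\overline{w_n}$ equals $\overline\omega$. Now $(t_1,\dots,t_k,w_{k+1},\dots,w_n)$ is a $\mathbb Q$-basis for $\nu\circ\mu$ (from the exact sequence $0\to\Gamma_\mu\to\Gamma_{\nu\circ\mu}\to\Gamma_\nu\to 0$ tensored with $\mathbb Q$ and a dimension count), so \prettyref{thm:T48} gives $(\nu\circ\mu)(\omega)=(\nu\circ\mu)(\widetilde f\,t_1\cdots t_k)+\sum_{j}(\nu\circ\mu)(w_j)$; since $\widetilde f\,t_1\cdots t_k$ and the $w_j$ lie in $A_\nu^{*}$, their $(\nu\circ\mu)$-values lie in $\Gamma_\mu$ and equal the $\mu$-values of their residues, whence $(\nu\circ\mu)(\omega)=\mu(\overline{\widetilde f\,t_1\cdots t_k})+\sum_j\mu(\overline{w_j})=\mu(\overline\omega)$.

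To finish: the left-hand side of the displayed identity depends only on $\omega$ and $\nu\circ\mu$, not on $(*)$, so $\mu(\overline\omega)$ is the same for every representation. Given two representations with residue forms related by $\overline\omega_1=h\,\overline\omega_2$, $h\in\kappa_\nu^{*}$, the valuation property of $\mu$ on $\Lambda^{n-k}\Omega^1(\kappa_\nu/\mathbb C)$ (immediate from \prettyref{thm:T48}, cf. \prettyref{prop:P1}) gives $\mu(h)=\mu(\overline\omega_1)-\mu(\overline\omega_2)=0$ for every dimension-zero maximal-rank Abhyankar place $\mu$ of $\kappa_\nu/\mathbb C$. Such places separate $\kappa_\nu^{*}$ from $\mathbb C^{*}$: if $h\notin\mathbb C$, pick a prime divisor $E$ of $\kappa_\nu/\mathbb C$ with $v_E(h)\neq 0$ and any dimension-zero maximal-rank Abhyankar place $\mu_1$ of $\kappa(E)/\mathbb C$; then $\mu:=v_E\circ\mu_1$ is again dimension-zero of maximal rank and $\mu(h)$ has non-zero $\Gamma_{v_E}$-component, a contradiction. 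Hence $h\in\mathbb C^{*}$, i.e. $\overline\omega$ is independent of $(*)$ up to a constant $c\in\mathbb C^{*}$. Since scaling $\overline\omega$ by such $c$ leaves the zero--pole divisor of the corresponding rational section of the canonical reflexive sheaf unchanged, and this divisor satisfies the finiteness condition on every model exactly as for $\mathcal K^\omega$, the $b$-divisor $\mathcal K^{\overline\omega}$ of $\kappa_\nu/\mathbb C$ is well defined.

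I expect the main obstacle to be the first reduction inside the proof of the key identity, the change of transcendence basis (and of lifts) of $\kappa_\nu$: one needs not merely that the Jacobian relating the two representations of $\omega$ is a $\nu$-unit --- which would follow from \prettyref{thm:T49} --- but the finer statement that its residue in $\kappa_\nu$ equals the Jacobian of the residue transcendence bases, so that the residue operation really commutes with change of basis. This is exactly \prettyref{lem:L52}, and the care required lies in correctly combining it with the chain rule in $\Lambda^{n-k}\Omega^1(\kappa_\nu/\mathbb C)$ and in checking that passing to the auxiliary extension $(L,\nu')$ with $\kappa(\nu')=\kappa(\nu)$ does not alter the residue form.
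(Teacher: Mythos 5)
Your proposal is correct, and its ingredients --- \prettyref{lem:L52}, the valuation formulas of \prettyref{thm:T48} and \prettyref{thm:T49}, the exact sequence of value groups for a composed place, and the closing step ``equal value at every prime divisor of $\kappa_{\nu}$ forces the ratio to be a constant'' --- are exactly those of the paper. The organization, however, is genuinely different. The paper treats the three sources of ambiguity separately: it proves \emph{exact} equality of the residue forms under a change of lifts (via Kaplansky embeddings into $\mathbb C((\Gamma))$ and the commutation of residues with formal partial derivatives) and under a change of transcendence basis (via the minimal-polynomial computation), and only for the change of $\mathbb Q$-basis $t_1,\dots,t_k$ does it compare the $b$-divisors $\mathcal K^{\overline{\omega}^1}$ and $\mathcal K^{\overline{\omega}^2}$ at the composed places $\nu\circ\nu_E$. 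You instead establish the single identity $(\nu\circ\mu)(\omega)=\mu(\overline{\omega}_{(*)})$ for every dimension-zero $\mu$ and every representation, and let the separation argument absorb all three ambiguities at once; this is more uniform and bypasses the Laurent-series machinery of the paper's first step entirely, at the cost of obtaining only the ``up to a constant'' conclusion everywhere (which is all the theorem asserts), rather than the on-the-nose equalities of the paper's steps 1 and 2. One point you should make explicit: \prettyref{lem:L52} compares arbitrary lifts of the first transcendence basis with the \emph{specific} lifts of the second obtained by solving the lifted minimal polynomials in the finite valued extension $(L,\nu')$, so your $\mu$-adapted auxiliary representation must use those particular lifts; since that representation is only a device for computing $\mu(\overline{\omega}_{(*)})$, and the two residue forms coincide exactly (residue of the Jacobian equals the Jacobian of the residues, cancelled against the chain rule in $\Lambda^{n-k}\Omega^1(\kappa_{\nu}/\mathbb C)$), this is harmless --- and it explains why you never need a separate verification of independence of the lifts.
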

\begin{remark}
\mylabel{rem:R5}
 Observe that if $\nu(\omega)=0$\, then by \prettyref{thm:T49} 
 $$\nu(\omega)=\nu(ft_{1}\cdot ...\cdot t_k)=0\,\, \text{and the residue}\,\, \overline{f\cdot t_{1}\cdot ...\cdot t_k}$$ makes sense.
\end{remark}
\begin{proof} The arguement proceeds in three steps. 
\begin{enumerate}[1]
\item First we prove by fixed $\mathbb Q$-basis $(t_1,...,t_k)$\, and fixed transcendence basis $\overline{x_{k+1}},...,\overline{x_n}$\, of $\kappa(\nu)/\mathbb C$\, the independence of $\overline{\omega}$\, of the choosen lifts 
$$x_{k+1},...,x_n\in A_{\nu}^*\subset K.$$
 If $y_{k+1},...,y_n$\, is another set of lifts, by the exchange principle, we may assume that $y_{k+2}=x_{k+2},...,y_n=x_n$\,. We fix an Abhyankar place $\mu\in R_{Ab}^{n-k}(\kappa(\nu)/\mathbb C)$\, such that $\overline{x_{k+1}},...,\overline{x_n}$\, form a $\mathbb Q$-basis of $\mu$\, and form the composed valuation $\nu\circ \mu$\, which is then an Abhyankar place of dimension zero. If $x_{k+1},...,x_n$\, is any set of lifts of this $\mathbb Q$-basis to $A_{\nu}^*\subset K$\,, then $t_1,...,t_k, x_{k+1},...,x_n$\, is  by general valuation theory a $\mathbb Q$-basis for $\nu\circ \mu$\,. By \prettyref{thm:T2}, there is an embedding of fields $K\hookrightarrow \mathbb C((\Gamma_{\nu\circ \mu}))$\, such that the restriction of the canonical valuation on the latter gives the valuation $\nu\circ \mu$\, of $K/\mathbb C$\,. As explained in the introduction, we may use the above $\mathbb Q$-basis to obtain an isomorphism 
$$(*)\,\,\Gamma_{\nu\circ \mu}\otimes_{\mathbb Z}\mathbb Q\cong \Gamma_{\mu}\otimes_{\mathbb Z}\mathbb Q\oplus \Gamma_{\nu}\otimes_{\mathbb Z}\mathbb Q\cong \mathbb Q^{n-k}\oplus \mathbb Q^k\cong \mathbb Q^n.$$
We may extend the total order on $\Gamma_{\nu\circ \mu}$\, to a total order on $\Gamma_{\nu\circ \mu}\otimes_{\mathbb Z}\mathbb Q$\, in a natural way and get an  embedding of valued fields 
$$\mathbb C((\Gamma_{\nu\circ \mu}))\hookrightarrow \mathbb C((\Gamma_{\nu\circ \mu}\otimes_{\mathbb Z}\mathbb Q)).$$ By \prettyref{lem:L100} there is an Abhyankar place $\nu'\circ \mu'\in R_{Ab}^n(\overline{K(X)}/\mathbb C)$\, restricting to $\nu\circ \mu$\, on $K(X)$\, with value group isomorphic to $\Gamma_{\nu\circ \mu}\otimes_{\mathbb Z}\mathbb Q$\,. Hence, we get an embedding of valued fields 
$$\Phi:K(X)\hookrightarrow \mathbb C((\Gamma_{\nu'\circ \mu'}))\cong \mathbb C((\mathbb Q^n,<)).$$ On $\mathbb C((\Gamma_{\nu'\circ \mu'}))$\, there is another valuation which we denote by the same letter $\nu'$ given by 
$$\nu': \mathbb C((\Gamma_{\nu'\circ \mu'}))\longrightarrow \Gamma_{\nu'\circ \mu'}\twoheadrightarrow \Gamma_{\nu'}.$$ The restriction of $\nu'$\, to $K(X)$\, is given by the original valuation $\nu$\,. 
Turning to the original question of indenpendence of $\overline{\omega}$\, of the lifts, we first remark, that we get a homomorphism 
$$\Omega^1(\Phi):\,\,\Lambda^n\Omega^1(K(X)/\mathbb C)\hookrightarrow \Lambda^n\Omega^1_{top}(\mathbb C((\Gamma_{\nu'\circ \mu'}))/\mathbb C))$$ 
such that we can view $\omega$\, as a top differential form in the latter module, namely
$$\Omega^1(\Phi)(\omega)=\Phi(f)\cdot d^1\Phi(t_1)\wedge ...\wedge d^1\Phi(t_k)\wedge d^1\Phi(x_{k+1})\wedge ...\wedge d^1\Phi(x_n).$$  By \prettyref{lem:L47}, we get as well an injective homomorphism
$$\Lambda^{n-k}\Omega^1(\phi):\,\,\Lambda^{n-k}\Omega^1(\kappa(\nu)/\mathbb C)\hookrightarrow \Lambda^{n-k}\Omega_{top}^1(\mathbb C((\Gamma_{\mu}))/\mathbb C),$$
where 
$$\phi: \kappa(\nu)\hookrightarrow \mathbb C((\Gamma_{\nu}))$$ is the above embedding of fields over $\mathbb C$ so that we can view each of the top differential forms $\overline{\omega}$\, of $\kappa(\nu)/\mathbb C$\, obtained by a presentation of $\omega$\, as a top differential form of $\mathbb C((\Gamma_{\mu}))/\mathbb C.$\, Because of injectivity, it then suffices to prove equality up to a constant complex scalar factor for the forms $\Omega^1(\phi)(\overline{\omega})$\,. Write
\begin{gather*}\omega=f\cdot d^1t_1\wedge ...\wedge d^1t_k\wedge d^1x_{k+1}\wedge ...\wedge d^1x_n;\\
\overline{\omega}^1=\overline{ft_1...t_k}\cdot d^1\overline{x_{k+1}}\wedge ...\wedge d^1\overline{x_n};\\
\omega=g\cdot d^1t_1\wedge ...\wedge d^1t_k\wedge d^1y_{k+1}\wedge d^1x_{k+2}\wedge ...\wedge d^1x_n;\\
\overline{\omega}^2=\overline{gt_1...t_k}\cdot d^1\overline{y_{k+1}}\wedge d^1\overline{x_{k+2}}\wedge ...\wedge d^1\overline{x_n}.
\end{gather*}
We identify these top differential forms with their images in the corresponding topological Kaehler differential modules over the fields of generalized Laurent series. Then, $y_{k+1}$\, has an expression 
$$y_{k+1}=y_{k+1}(\underline{t},\underline{x})=\sum_{\underline{r}\in \mathbb Q^k,\underline{s}\in \mathbb Q^{n-k}}a_{\underline{r},\underline{s}}\cdot \underline{t}^{\underline{r}}\cdot \underline{x}^{\underline{s}},$$ where the first nonzero $a_{\underline{r},\underline{s}}$\, has $\underline{r}=0$\, since $y_{k+1}\in A_{\nu}^*$\,. That the residue of $y_{k+1}$\, is $\overline{x_{k+1}}$\, then reads precisely that the initial term of the Laurent series is $x_{k+1}$\,.\\
We have with the above notation $g\cdot \partial^1y_{k+1}/\partial^1x_{k+1}=f$\, and we have to show that
$$\overline{\partial^1y_{k+1}/\partial x_{k+1}}=\partial^1\overline{y_{k+1}}/\partial^1\overline{x_{k+1}}.$$
This is also meaningfull because the power series $\partial^1y_{k+1}/\partial^1x_{k+1}$\, has leading term equal to $1$ and is thus a unit in $A_{\nu}$\, and exchanging taking residues and partial differentiation after $x_{k+1}$\, amounts to either first putting each $t_i$\, to zero and then differentiate or vice versa which is the same as one easily sees.\\
\item In the second step, we prove independence of the top differential form $\overline{\omega}$\, of the choosen transcendence basis $\overline{x_i}, i=k+1,...,n$\,. Again, by the exchange priniciple, if $\overline{y_i}, i=k+1,...,n$\, is another transcendence basis, we may assume after reordering that $\overline{y_i}=\overline{x_i}, i=k+2,...,n$\,. The proof runs along the  lines of the proof of \prettyref{thm:T49}. Let 
$$\overline{f}(\overline{y_{k+1}})=\overline{y_{k+1}}^m+a_{m-1}(\overline{\underline{x}})\cdot \overline{y_{k+1}}^{m-1}+ ...a_1(\overline{\underline{x}})\cdot \overline{y_{k+1}}+a_0(\overline{\underline{x}})=0$$
 be a minimal irreducible polynomial with coefficients in $\mathbb C(\overline{\underline{x}})$\,. We choose arbitrary lifts $x_i\in A_{\nu}^*$\, of $\overline{x_i}\in \kappa(\nu)$\, which give a lift of $\overline{f}$\, to 
 $$f(y_{k+1})=y_{k+1}^m+a_{m-1}(\underline{x})\cdot y_{k+1}^{m-1}+...+a_1(\underline{x})\cdot y_{k+1}+a_0(\underline{x})=0.$$
 As in the proof of \prettyref{thm:T49} we conclude that after passing to a finite valued algebraic extension $(L,\nu')/(K,\nu)$\, that $\nu'(y_{k+1})=0$\, for any solution $y_{k+1}\in L$\, of $f$. We may furthermore choose the solution  $\xi=y_{k+1}$\, such that $\overline{\xi}=\overline{y_{k+1}}$\,. We calculated the differential $d^1y_{k+1}$\, as 
 $$d^1y_{k+1}=\frac{-\sum_{i=0}^{m-1}y_{k+1}^i\cdot d^1a_i(\underline{x})}{f'(y_{k+1})}.$$ 
 The functions $x_{k+1},...,x_n$\, are algebraically independent and we can write 
 $$d^1a_i(\underline{x})=\sum_{j=k+1}^nb_{ij}d^1x_j,\,\, b_{ij}\in \mathbb C(\underline{x})\subset A_{\nu},\,\, i=0,..., m-1.$$
 Inserting this into the last equation, we get 
 \begin{gather*}(**)\,\,d^1y_{k+1}=-\sum_{i=0}^{m-1}\frac{\sum_{j=k+1}^nb_{ij}d^1x_j}{f'(y_{k+1})}=\\
 \sum_{j=k+1}^n(\sum_{i=0}^m\frac{-b_{ij}}{f'(y_{k+1})})d^1x_j.
 \end{gather*}
 The algebraic function $f'(y_{k+1})$\, is in $A_{\nu}^*$\, since its residue is equal to $\overline{f}'(\overline{y_{k+1}})$\, which is unequal to zero because $\overline{f}$\, was an irreducible polynomial and we are in characteristic zero. Also, for the same reason, the $b_{ij}$\, are either zero and we can omit the term $d^1x_j$\, or they are units in $A_{\nu}$\,. From $(**)$\, we get by taking residues an equation of differentials in the module $\Omega^1(\kappa(\nu)/\mathbb C)$\,
 $$d^1\overline{y_{k+1}}=\sum_{j=k+1}^n(\sum_{i=0}^m\frac{-\overline{b_{ij}}}{\overline{f}'(\overline{y_{k+1}})})d^1\overline{x_j}.$$ The coefficient in front of $d^1\overline{x_{k+1}}$\, cannot be zero since otherwise we had a linear dependence relation among the differentials $d^1\overline{y_{k+1}},d^1\overline{x_{k+1}},...,d^1\overline{x_n}$\, contrary to the assumption that $\overline{y_{k+1}},\overline{x_{k+2}},...,\overline{x_n}$\, form a transcendence basis of $\kappa(\nu)/\mathbb C$\,. Thus we have seen that 
 $$\overline{\partial^1y_{k+1}/\partial^1x_{k+1}}=\partial^1\overline{y_{k+1}}/\partial^1\overline{x_{k+1}}\neq 0.$$
 If we write again
 \begin{gather*}
 \omega =f\cdot d^1t_1\wedge ...\wedge d^1t_k\wedge d^1x_{k+1}\wedge ...\wedge d^1x_n;\\
 \overline{\omega}^1=\overline{ft_1...t_k}\cdot d^1\overline{x_{k+1}}\wedge ...\wedge d^1\overline{x_k};\\
 \omega =g\cdot d^1t_1\wedge ...\wedge d^1t_k\wedge d^1y_{k+1}\wedge d^1x_{k+2}\wedge ...\wedge d^1x_n;\\
 \overline{\omega}^2=\overline{gt_1...t_k}\cdot d^1\overline{y_{k+1}}\wedge d^1\overline{x_{k+2}}\wedge ...\wedge d^1\overline{x_n},
 \end{gather*}
 all we have to show is that 
 $$\overline{g\cdot \partial^1y_{k+1}/\partial^1x_{k+1}\cdot t_1...t_k}=\overline{ft_1...t_k}$$ which is, by the usual transformation rule applied to $\overline{\omega}^2$\, equivalent to $\overline{\partial^1y_{k+1}/\partial^1x_{k+1}}=\partial^1\overline{y_{k+1}}/\partial^1\overline{x_{k+1}}$\, which is what we have just proved . \\
 We have just shown, that for a particular choice of lift $y_i$\, of $\overline{y_i}, i=k+1,...,n$\, the top rational differential form $\overline{\omega}$\, is independent of the choice of transcendence basis. But by step 1 we also know that the differential form $\overline{\omega}$\, does not depend on the lift $y_i, i=1,...,n$\,.
 \item The final point is to show independence of the choice of $\mathbb Q$- basis $t_1,...,t_k$\,. Let $s_1,...,s_k$\, be another $\mathbb Q$-basis, take arbitrary lifts $x_{k+1},...,x_n$\, of an arbitrary transcendence basis $\overline{x_{k+1}},...,\overline{x_n}$\, and write 
 \begin{gather*}
 \omega=f\cdot d^1t_1\wedge ...\wedge d^1t_k\wedge d^1x_{k+1}\wedge ...\wedge d^1x_n;\\
 \overline{\omega}^1=\overline{ft_1...t_k}\cdot d^1\overline{x_{k+1}}\wedge ...\wedge d^1\overline{x_n};\\
 \omega =g\cdot d^1s_1\wedge ...\wedge d^1s_k\wedge d^1x_{k+1}\wedge ...\wedge d^1x_n;\\
 \overline{\omega}^2=\overline{gs_1...s_k}\wedge d^1\overline{x_{k+1}}\wedge ...\wedge d^1\overline{x_n}.
 \end{gather*} The two top differential forms $\overline{\omega}^1$\, and $\overline{\omega}^2$\, determine two canonical $b$-divisors $\mathcal K^{\overline{\omega}^1}$\, and $\mathcal K^{\overline{\omega}^2}$\, of the function field $\kappa(\nu)/\mathbb C$\,. We want to prove that they are equal as $b$-divisors. Since they always differ by $\overline{(\phi)}$\, for some $\phi\in \kappa(\nu)$\, if $\overline{\omega}^1=\phi\cdot \overline{\omega}^2,$\, this shows that $\overline{(\phi)}=0$\, which can only be the case if $\phi$\, is a complex constant (what we wanted to show).\\
 To this end, let $E$\, be an arbitrary prime divisor of $\kappa(\nu)/\mathbb C$\,. Choose $\overline{x_{k+1}}'\in \kappa(\nu)$\, with $\nu_E(\overline{x_{k+1}}')>0$ and $\overline{x_{k+2}}',...,\overline{x_n}'$\, such that their residues in $\kappa(\nu_E)$\, form a transcendence basis of $\kappa(\nu_E)/\mathbb C$\,. Choose lifts $x'_{k+1},...,x'_n$\, to $A_{\nu}^*$\, in the special form as described in \prettyref{lem:L52}. Write 
 \begin{gather*}
 \omega=h\cdot d^1t_1\wedge ...\wedge d^1t_k\wedge d^1x_{k+1}'\wedge ...\wedge d^1x_n';\\
 \omega =k\cdot d^1s_1\wedge ...\wedge d^1s_k\wedge d^1x_{k+1}'\wedge ...\wedge d^1x_n'\,\,\mbox{with}\\
 h\cdot \mbox{Jac}(\underline{t}\cup \underline{x'}/\underline{t}\cup \underline{x})=h\cdot \mbox{Jac}(\underline{x'}/\underline{x})=f;\\
 k\cdot \mbox{Jac}(\underline{s}\cup\underline{x'}/\underline{s}\cup\underline{x})=k\cdot \mbox{Jac}(\underline{x'}/\underline{x})=g.
 \end{gather*}
 Now, we form the composed valuation $\mu:=\nu\circ \nu_E$\, and we know from \prettyref{thm:T49} that we can calculate $\mu(\omega)$\, as
 \begin{gather*}\mu(\omega)=\mu(h\cdot t_1\cdot...\cdot t_k\cdot x_{k+1}')=\\
 \mu(f\cdot \mbox{Jac}(\underline{x}/\underline{x}')\cdot t_1\cdot...\cdot t_k\cdot x_{k+1}')=\\
 \mu(k\cdot s_1\cdot ...\cdot s_k\cdot x_{k+1}')=\\
 \mu(g\cdot \mbox{Jac}(\underline{x}/\underline{x'})\cdot s_1\cdot ...\cdot s_k\cdot x_{k+1}').
 \end{gather*}
 Now, $\nu(f\cdot t_1\cdot ...\cdot t_k)=\nu(g\cdot s_1\cdot ...\cdot s_k)=0$\, by assumption and it follows that $\nu(\mbox{Jac}(\underline{x'}/\underline{x}))=0$\, since we can also use these two presentations of $\omega$\, to calculate $\nu(\omega)$\,. Then, taking residues modulo $\mathfrak{m}_{\nu}$\, and applying \prettyref{lem:L52}, we get
 \begin{gather*}\nu_E(\overline{\omega}^1)=\nu_E\overline{(f\cdot t_1\cdot ...\cdot t_k\cdot \mbox{Jac}(\underline{x}/\underline{x}')\cdot x_{k+1}'})=\\
 \nu_E(\overline{f\cdot t_1\cdot ...\cdot t_k\cdot x_{n+1}'}\cdot  \mbox{Jac}(\overline{\underline{x}}/\overline{\underline{x}'}))\,\,\text{and}\\
 \nu_E(\overline{\omega}^2)= \nu_E(\overline{g\cdot s_1\cdot ...\cdot s_k\cdot x_{k+1}'\cdot \mbox{Jac}(\underline{x}/\underline{x'}}))=\\
 \nu_E(\overline{g\cdot s_1\cdot ...\cdot s_k\cdot x_{k+1}'}\cdot \text{Jac}(\overline{\underline{x}}/\overline{\underline{x}'}))
 \end{gather*}
  and both quantities are, as we have seen above, equal to $\mu(\omega)$\,. \\
  Thus for an arbitrary prime divisor $E$\, in $\kappa(\nu)/\mathbb C$\,, we have $\nu_E(\overline{\omega}^1)=\nu_E(\overline{\omega}^2)$\, or $\mathcal K^{\overline{\omega}^1}(E)=\mathcal K^{\overline{\omega}^2}(E)$\, and thus the two canonical b-divisors are the same.
\end{enumerate}
\end{proof} 
We want to give a second, more conceptual proof of the preceeding theorem.
\begin{proof}
We first extend $\nu$\, to an Abhyankar place $\nu'\in R_{Ab}(\overline{K(X)}/\mathbb C)$\, and then fix an arbitrary  rational $\nu_0'\in R^{k}_{Ab}(k_{\nu'}/\mathbb C)$\, and form the composed valuation $\mu':=\nu'\circ \nu_0'$\,. \\
By the embedding theorem of Kaplansky (\prettyref{thm:T2}) we find an embedding of valued fields over $\mathbb C$ $$\overline{K(X)}\hookrightarrow \mathbb C((\Gamma_{\mu'})).$$ We have an order preserving exact sequence of ordered abelian groups
$$0\longrightarrow \Gamma_{\nu'_0}\longrightarrow \Gamma_{\mu'}\longrightarrow \Gamma_{\nu'}\longrightarrow 0.$$ 
 We consider $\omega$ as an element in $\Lambda^n\Omega_{top}^1(\mathbb C((\Gamma_{\mu'}))/\mathbb C)$\, via the inclusions 
 \begin{align*} \Lambda^n(\Omega^1(K(X)/\mathbb C))\hookrightarrow \Lambda^n(\Omega^1(K(X)/\mathbb C))\otimes_{K(X)}\overline{K(X)}\stackrel{\cong}\longrightarrow \Lambda^n(\Omega^1(\overline{K(X)}/\mathbb C))\\
  \hookrightarrow \Lambda^n(\Omega^1(\overline{K(X)}/\mathbb C))\otimes_{\overline{K(X)}}\mathbb C((\Gamma_{\mu'}))\stackrel{\cong}\longrightarrow \Lambda^n(\Omega^1_{top}(\mathbb C((\Gamma_{\mu'}))/\mathbb C)).
 \end{align*}
  The restriction $\overline{\omega}$\, is then considered via the corresponding chain of isomorphisms as an element of $\Lambda^k\Omega_{top}^1(\mathbb C((\Gamma_{\nu_0}))/\mathbb C)$\,.\\
We prove first that the restricted differential form is independent of the choice of $y_1,...,y_k$\,. We fix a $\mathbb Q$-basis $\gamma_1,...,\gamma_k$\, of $\Gamma_{\nu_0'}$\, and add to them $\gamma_{k+1},...,\gamma_n\in \Gamma_{\nu'\circ \nu_0'}$\, so as to obtain a $\mathbb Q$-basis of $\Gamma_{\nu'\circ \nu_0'}$\,.\\
 Observe that the images of $\gamma_{k+1},...,\gamma_n$\, in $\Gamma_{\nu}$\, form also a $\mathbb Q$-basis. We put $z_i:=z^{\gamma_i}, i=1,...,n$\,.  We compare each  choice of $y_1,...,y_n$\, with the choice $z_1,...,z_n$\,. \\
 We run the proof by induction on the number $l$ of different elements in the two choices. Write
 $$\omega:=f\cdot d^1z_1\wedge ...\wedge d^1z_k\wedge d^1y_{k+1},...\wedge d^1y_n= g\cdot d^1y_1\wedge ...\wedge d^1y_n.$$
 We have to show that 
 $$\overline{fy_{k+1}...y_n}\cdot d^1\overline{z_1}\wedge...\wedge d^1\overline{z_k}= \overline{gy_{k+1}...y_n}\cdot d^1\overline{y_1}\wedge ...\wedge d^1\overline{y_k}.$$
  For $l=0$\, there is nothing to prove. Assume we have equality for $l=k_0<k$\,. Let now say $y_1=z_1,...,y_{k_0}=z_{k_0}$  and $y_{k_0+1},...,y_k$\, be arbitrary. By the inductive hypothesis we then have 
  $$\overline{gy_{k+1}...y_n}\cdot d^1\overline{z_1}\wedge..\wedge d^1\overline{z_{k_0}}\wedge d^1\overline{y_{k_0+1}}...\wedge d^1\overline{y_k}= \overline{fy_{k+1}...y_n}\cdot d^1\overline{z_1}\wedge ...\wedge d^1\overline{z_n}.$$
  We now exchange $y_{k_0+1}$\, and $z_{k_0+1}$\, and write 
  $$\omega=h\cdot d^1z_1\wedge ...\wedge d^1z_{k_0}\wedge d^1y_{k_0+1}...\wedge d^1y_k\wedge ...\wedge d^1y_n.$$
   We have $h\cdot \partial^1y_{k_0+1}/\partial^1z_{k_0+1}=g$\, by applying \prettyref{lem:L52}.\\
    We have to show 
  \begin{align*}\overline{h\cdot \partial^1y_{k_0+1}/\partial^1z_{k_0+1}\cdot y_{k+1}...\cdot y_k}d^1\overline{z_1}\wedge ...\wedge d^1\overline{z_{k_0}}\wedge d^1\overline{y_{k_0+1}}\wedge ...\wedge d^1\overline{y_k} \\
  =\overline{h\cdot y_{k+1}\cdot...\cdot y_n}\cdot d^1\overline{z_1}\wedge ...\wedge d^1\overline{z_{k_0-1}}\wedge d^1\overline{y_{k_0}}\wedge ...\wedge d^1\overline{y_k}.
  \end{align*}
  This comes down to the equality $$\overline{\partial y_{k_0+1}/\partial z_{k_0+1}}=\partial\overline{y_{k_0+1}}/\partial\overline{z_{k_0+1}}.$$
  We separate this statement into the following lemma.
  \begin{lemma}
  \mylabel{lem:L7} With notation as above, let $y\in \mathbb C[[\Gamma_{\nu\circ \nu_0}]]_{(p_{\Gamma_{\nu_0}})}$ be a Laurent series. \\
  Then $\overline{\partial^1y/\partial^1z_i}=\partial^1\overline{y}/\partial^1\overline{z_i} $\, for $i=1,...,k$\,, that is, taking partial derivatives commutes with reduction to the residue field $k_{\nu}$\,.
  \end{lemma}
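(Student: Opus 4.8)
The plan is to make the residue homomorphism of the composed valuation completely explicit on generalized Laurent series, so that its commutation with $\partial^1/\partial^1z_i$ can be read off term by term. Write $\Gamma=\Gamma_{\nu\circ\nu_0}$ and $\Gamma_0=\Gamma_{\nu_0}$, a convex subgroup of $\Gamma$ with quotient $\Gamma/\Gamma_0\cong\Gamma_\nu$. First I would unwind \prettyref{prop:P60}: by part (2), $A_{\nu_{can}}=\mathbb C[[\Gamma_{\nu\circ\nu_0}]]_{(p_{\Gamma_{\nu_0}})}$ is the subring $\mathbb C((\Gamma_{\geq 0}\cup\Gamma_0))\subset\mathbb C((\Gamma))$, and by part (1) its residue field is $\mathbb C((\Gamma_0))$, the field into which $k_\nu$ is embedded in \prettyref{lem:L51}. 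Using the compatible $\mathbb Q$-basis $\gamma_1,\dots,\gamma_n$ of $\Gamma$ — with $\gamma_1,\dots,\gamma_k$ a basis of $\Gamma_0$ and $\gamma_{k+1},\dots,\gamma_n$ lifting a basis of $\Gamma_\nu$ — I write every $f\in\mathbb C((\Gamma))$ as $f=\sum_{\underline{r}\in\mathbb Q^{k},\ \underline{s}\in\mathbb Q^{n-k}}a_{\underline{r},\underline{s}}\,\underline{z}^{(\underline{r},\underline{s})}$ with $z_i=z^{\gamma_i}$, the multi-index $\underline{s}$ recording the $\Gamma_\nu$-direction.

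The two facts on which everything rests are: (a) if $f\in A_{\nu_{can}}$ then every exponent $(\underline{r},\underline{s})$ occurring in $f$ has $\underline{s}\geq 0$ in $\Gamma_\nu$ — the least exponent does by the description of $\mathbb C((\Gamma_{\geq 0}\cup\Gamma_0))$, and because $\Gamma_0$ is convex the $\Gamma_\nu$-component is the most significant coordinate of the order, so every larger exponent has $\underline{s}$-component no smaller; and (b) the residue map $A_{\nu_{can}}\twoheadrightarrow\mathbb C((\Gamma_0))$ is then exactly $f\mapsto\overline{f}:=\sum_{\underline{r}}a_{\underline{r},0}\,\underline{z}^{(\underline{r},0)}$, i.e.\ deletion of all terms with $\underline{s}\neq 0$. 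One checks that the retained part has well-ordered support and that the assignment is a ring homomorphism with kernel $\mathfrak{m}_{\nu_{can}}$, both again a direct consequence of convexity of $\Gamma_0$ together with (a); this is the localized form of \prettyref{prop:P60}(1) and agrees with the computation of the residue map already carried out in the proof of \prettyref{lem:L51}.

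Granting (a) and (b), the lemma is immediate. For $i\in\{1,\dots,k\}$, \prettyref{def:D10} gives $\partial^1/\partial^1z_i(f)=\sum_{\underline{r},\underline{s}}r_i\,a_{\underline{r},\underline{s}}\,\underline{z}^{(\underline{r}-1^{i},\underline{s})}$, an operation that changes only the $\underline{r}$-part of the exponents; hence by (a) it maps $A_{\nu_{can}}$ into itself, so $\overline{\partial^1y/\partial^1z_i}$ is defined, and deletion of the $\underline{s}\neq 0$ terms clearly commutes with this term-by-term bookkeeping. Thus $\overline{\partial^1y/\partial^1z_i}=\sum_{\underline{r}}r_i\,a_{\underline{r},0}\,\underline{z}^{(\underline{r}-1^{i},0)}$, which by \prettyref{def:D10} applied to the field $\mathbb C((\Gamma_0))=k_\nu$ with respect to the basis $\gamma_1,\dots,\gamma_k$ of $\Gamma_0$ is precisely $\partial^1\overline{y}/\partial^1\overline{z_i}$.

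I expect the only genuine work to be fact (b): checking that ``deletion of the terms whose exponent leaves the convex subgroup $\Gamma_0$'' really is the residue homomorphism of $\nu_{can}$ and is multiplicative. Everything after that is formal, the decisive point being that for $i\leq k$ the derivative $\partial^1/\partial^1z_i$ moves only within the convex-subgroup directions; this is exactly why the lemma is stated for these indices and why it would fail for $i>k$, where $\partial^1/\partial^1z_i$ need not even preserve $A_{\nu_{can}}$.
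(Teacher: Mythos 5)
Your proposal is correct and follows essentially the same route as the paper: identify the residue map of the composed valuation with "deletion of all terms whose exponent lies outside the convex subgroup $\Gamma_{\nu_0}$," observe that for $i\leq k$ the formal derivative $\partial^1/\partial^1z_i$ only shifts exponents within the $\Gamma_{\nu_0}$-directions, and conclude that the two operations commute term by term. Your version merely makes explicit the two facts (well-definedness and multiplicativity of the deletion map, nonnegativity of the $\Gamma_\nu$-components of the exponents) that the paper's one-paragraph proof takes for granted.
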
 
    \begin{proof} Write $$\partial^1y/\partial^1z_i=\sum_{\underline{n}\in \mathbb N^n}a_{\underline{n}}\underline{z}^{\underline{n}}.$$
   One obtains the residue if one cancels all terms containing a power of some $z^j, k+1\leq j\leq n$\,. Now, a summand contains some $z^j, k+1\leq j\leq n$\, iff the partial derivative contains some $z_j, k+1\leq j\leq n$\,.  If a summand $A$ does not contain any $z_j, k+1\leq j\leq n$\,, say $A=\prod_{i=1}^kz_i^{n_i}$\,, then 
   $$\overline{\partial^1A/\partial^1z_i}=\partial^1\overline{A}/\partial^1\overline{z_i}=n_i\cdot(\prod_{i=1}^k\overline{z_i}^{n_i})/\overline{z_i}.$$
 \end{proof}
 This finishes the inductive step and we have shown that for any lifts $y_1,...,y_k$\,of algebraically independent elements in $k_{\nu}$\, we have 
 $$\overline{g\cdot y_{k+1}...\cdot y_n}\cdot d^1\overline{y_1}\wedge ...\wedge d^1\overline{y_k}=\overline{f\cdot y_{k+1}\cdot ...\cdot y_n}\cdot d^1\overline{z_1}\wedge ...\wedge d^1\overline{z_k},$$
 what was to be proven.\\
   We finally show that the canonical form $\overline{\omega}$\, does not depend on the choice of the $\mathbb Q$-basis $z_{k+1},...,z_n$\, of $\nu$\,. We may again apply the exchange property, so we may assume that the two $\mathbb Q$-basis differ only by one element, say $z_{k+1}$ and $z_{k+1}'$\,. By the first two parts of the proof, we are free to choose $z_1,...,z_k$\, without changing $\overline{\omega}$\,.\\
     Choose a $\mathbb Q$-basis $z_1,...,z_k\in k_{\nu'}$ for $\nu_0'$\, and consider them via the canonical embedding $\mathbb C((\Gamma_{\nu_0'}))\hookrightarrow \mathbb C((\Gamma_{\mu'}))$\, as elements of the latter field. Then the elements $z_1,...,z_n$\, as well as $$z_1,...,z_k,z_{k+1}',z_{k+2},...,z_n$$
  form $\mathbb Q$-basis for the canonical valuation of $k((\Gamma_{\mu'}))$\,. Write 
  $$\omega= f\cdot d^1z_1\wedge ...\wedge d^1z_n = f'\cdot d^1z_1\wedge ...d^1z_k\wedge d^1z_{k+1}'\wedge d^1z_{k+2}\wedge ...\wedge d^1z_n.$$ 
  Put $$\overline{\omega}^1=\overline{f\cdot z_{k+1}\cdot ...\cdot z_n}\cdot d^1z_1\wedge ...\wedge d^1z_k$$\, 
  and 
  $$\overline{\omega}^2=\overline{f'\cdot z_{k+1}'\cdot z_{k+2}\cdot ...\cdot z_n}\cdot d^1z_1\wedge ...\wedge d^1z_k.$$
We have 
$$\mu(\omega)=\mu(f\cdot z_1\cdot ...\cdot z_n)=\mu(f'\cdot z_1\cdot ...\cdot z_k\cdot z_{k+1}'\cdot z_{k+2}\cdot ...\cdot z_k)$$
 by \prettyref{thm:T48} and \prettyref{thm:T49} and 
 $$\nu_0(\overline{\omega}^1)=\nu_0(\overline{f\cdot z_1\cdot ...\cdot z_k\cdot z_{k+1}\cdot ...\cdot z_n})$$
  and $$\nu_0(\overline{\omega}^2)=\nu_0(\overline{f'\cdot z_{k+1}'\cdot z_{k+2}\cdot ...\cdot z_n\cdot z_1\cdot ...\cdot z_k})$$ again by \prettyref{thm:T48} and \prettyref{thm:T49}. As 
  $$\mu(\frac{f\cdot z_1\cdot ...\cdot z_n}{f'\cdot z_1\cdot ...\cdot z_k\cdot z_{k+1}'\cdot z_{k+2}\cdot ...\cdot z_n})=0,$$
   the leading term in the generalized Laurent series expansion is a constant. The homomorphism $k[[\Gamma_{\mu}]]_{p_{\Gamma_{\nu_0}}}\longrightarrow k((\Gamma_{\nu_0}))$\, is obtained by deleting all terms $a_{\gamma}z^{\gamma}$\, in a Laurent series with $\gamma\notin \Gamma_{\nu_0}$\,. In particular, if a Laurent series starts with a constant term, the image Laurent series starts with this same constant term . I.e., the Laurent series expansion in $k((\Gamma_{\nu_0}))$\, of 
   $$\frac{\overline{f\cdot z_1\cdot ...\cdot z_n}}{\overline{f'\cdot z_1\cdot ...\cdot z_k\cdot z_{k+1}'\cdot z_{k+2}\cdot ...\cdot z_n}}$$
 starts with a nonzero constant term. It follows 
 $$\nu_0(\frac{\overline{f\cdot z_1\cdot ...\cdot z_n}}{\overline{f'\cdot z_1\cdot ...\cdot z_k\cdot z_{k+1}'\cdot z_{k+2}\cdot ...\cdot z_n}})=0$$
  and then $$\nu_0(\overline{\omega}^1)=\nu_0(\overline{\omega}^2).$$ 
  As $\nu_0\in R_0(k_{\nu}/\mathbb C)$\, was arbitrary, we can take for $\nu_0$ each valuation in $R^{k,k}(k_{\nu}/\mathbb C)$\,, that is valuations composed by a flag of prime divisors. It then follows $\nu_0(\overline{\omega}^1)=\nu_0(\overline{\omega}^2)$\, for each $\nu_0\in R^{1,1}(k_{\nu})$\,. Let $\overline{\omega}^2=h\cdot \overline{\omega}^1$\,. For the corresponding canonical $b$-divisors  we then have $$\mathcal K^{\overline{\omega}^2} =\overline{\mbox{div}(h)}+\mathcal K^{\overline{\omega}^1}.$$ But the two $b$-divisors coincide, since they have at each prime divisor the same value and it follows $\overline{\mbox{div}(h)}=0$\,. This can only be the case if $h\in \mathbb C/\{0\}$\, is a constant .
\end{proof} 
 Now assume that $X$ is smooth and $\nu$ is represented by a smooth Cartier divisor on $X$. Then from the cotangential sequence $$0\longrightarrow \mathcal I_H/\mathcal I_H^2\longrightarrow \Omega_X\mid_H\longrightarrow \Omega_H\longrightarrow 0$$
   we deduce, that, if we choose an open subset $U\subset X$ with an etale morphism $p:U\longrightarrow \mathbb A^n$\, such that $H=p^{-1}(x_1=0)$\, we get that $\Omega_X$ and $\Omega_H$\, are free on $U$ and $U\cap H,$\, respectively, with generators 
   $$d^1p^{\sharp}x_1,d^1p^{\sharp}x_2,..,d^1p^{\sharp}x_n\,\, \text{and}\,\, d^1p^{\sharp}\overline{x_2},...,d^1p^{\sharp}\overline{x_n},$$ respectively.\\
   The function $t=p^{\sharp}x_1$\, is a local generator of $\mathcal I_H$\, and the class of $t$ is sent in the above sequence to $d^1t=d^1p^{\sharp}(x_1)$\,. Now if 
   $$\omega =f\cdot d^1p^{\sharp}x_1\wedge d^1p^{\sharp}x_2\wedge ... \wedge d^1p^{\sharp}x_n$$ and $f$ has a simple pole along $H$, then in the resulting well known isomorphism $\Lambda^{n-1}\Omega_H\cong \Lambda^n\Omega_X\otimes \mathcal O_H(H)$\,, the rational section $t\cdot \omega\mid_H$ of $\mathcal O_X(K_X+H)$\, corresponds to the rational top differential form  $\overline{f\cdot t}\cdot d^1p^{\sharp}\overline{x_2}\wedge ...\wedge d^1p^{\sharp}\overline{x_n}$\,.\\
   Thus in our above situation, in case $X$ and $H$ are smooth, the divisor of the above constructed top differential form $\overline{\omega}\in \Lambda^{n-1}\Omega^1(k(H)/\mathbb C)$\,is nothing but $K_X^{\omega}+H\mid_H$\,.\\
   Of course, this does not hold for arbitrary $X$ and $H$.\\ 
\section{Log discrepancies for Abhyankar places}
\begin{definition}\mylabel{def:D16} Let $(X,D)$\, be a log pair such that $K_X+D$\, is an $\mathbb R$- Cartier $\mathbb R$- divisor and $\nu\in R_{Ab}(K(X)/\mathbb C)$\, an Abhyankar place. Choose a  rational top differential form $\omega\in \Lambda^n\Omega^1(K(X)/\mathbb C)$\,. Define 
$$a(X,D,\nu):= \nu(\omega)-\nu(K_X^{\omega}+D)\in \Gamma_{\nu}\otimes_{\mathbb Z}\mathbb R.$$
Let $\nu\in R^1_{Ab}(K(X)/\mathbb C)$\, be an Abhyankar place of rank one. Let $H$ be an arbitrary $\mathbb R$-Cartier divisor. Define the log canonical threshhold of $H$ with respect to $(X,D)$\, at the place $\nu$\, as 
$$\mbox{lct}((X,D),H,\nu)):=\sup\{r\in\mathbb R\mid a(X,D+rH,\nu)\geq 0\}.$$
\end{definition}
\begin{remark}
\begin{enumerate}[1]
\item  As to the first definition, this element $a(X,D,\nu)\in \Gamma_{\nu}\otimes_{\mathbb Z}\mathbb R$\, is independent of the choice of the rational top differential form $\omega$\,. Each other $\omega'\in \Lambda^n\Omega^1(K(X)/\mathbb C)$\, can be written as $\omega'=f\cdot \omega$\,. From the definition of the valuation of a rational top differential form, we have $\nu(\omega')=\nu(f)+\nu(\omega)$\, and from the definition of a rational section of $\mathcal O_X(K_X)$\, corresponding to a form $\omega$\, we have $\nu(K_X^{\omega'}+D)=\nu(K_X^{\omega}+(f)+D)=\nu(f)+\nu(K_X^{\omega}+D)$\,. Thus,
$$\nu(\omega')-\nu(K_X^{\omega'}+D)=\nu(\omega)+\nu(f)-\nu(K_X^{\omega}+D)-\nu(f)=\nu(\omega)-\nu(K_X^{\omega}+D).$$
\item As to the definition of the log canonical threshold $\mbox{lct}((X,D),H,\nu)$\,, as $\nu$\, has rank one, there is an order preserving embedding 
$\Gamma_{\nu}\hookrightarrow \mathbb R$\, that is unique up to scaling with a factor $s\in \mathbb R$\,. We can thus calculate $a(X,D+rH,\nu)\in \mathbb R$\,.\\
Observe, also one might define $\nu(rH)$\, for a valuation of rank greater than one, (there is always an order embedding into a real extension of the real numbers, the supremum might not exist as a real number.
\end{enumerate}
\end{remark}
\begin{definition}\mylabel{def:D200} Let $(X,D)$\, be a log pair. With notations as above, if 
\begin{enumerate}[1]
\item $a(X,D,\nu)>0$\, for all $\nu\in R_{Ab}(K(X)/\mathbb C)$\,, then $(X,D)$\, is called globally kawamata log terminal (globally klt);
\item $a(X,D,\nu)\geq 0$\, for all $\nu\in R_{Ab}(K(X)/\mathbb C)$\,, then $(X,D)$\, is called globally log canonical (globally lc).
\end{enumerate}
\end{definition}
We start this section with a fundamental 
\begin{lemma}\mylabel{lem:L11} Let $(X,\Delta)$\, be a log pair and $\nu\in R_{Ab}(K/\mathbb C)$\, be an Abhyankar place of dimension $n-k<n$\,. Then, there is a log resolution $p:(\widetilde{X},\widetilde{\Delta})\longrightarrow (X,\Delta)$\, with $p^*(K_X^{\omega}+\Delta)=K_{\widetilde{X}}^{\omega}+\widetilde{\Delta}$\, for some rational top differential form $\omega$\, (crepant pull back) and a $\mathbb Z$-basis $r_1,...,r_k\in \mathfrak{m}_{\nu}\subset A_{\nu}$\, such that
\begin{enumerate}[a]
\item If $c_{\widetilde{X}}(\nu)=\widetilde{\eta}$\,, then $r_1,...,r_k$\, are in $\mathcal O_{\widetilde{X},\widetilde{\eta}}$\, and $\mathfrak{m}_{\widetilde{\eta}}=(r_1,r_2,...,r_k)$\,.
\item We have $\kappa(\nu)=\kappa(\widetilde{\eta})$\,.
\item If $\Delta=\sum_id_i\Delta_i$\, and $\nu$\, is not composed with $\Delta_i$\,, then $\widetilde{\eta}$\, does not lie on the strict transform on $\widetilde{X}$\, of $\Delta_i$\,. The same statement holds for an arbitrary finite collection $S=\{D_1,...,D_n\}$\, of prime divisors.
\item If $E_i$\, is the Zariski closure of the principal divisor $(r_i)$\, in a neighbourhood of $\widetilde{\eta}$\,, where $r_i$ is regular and $F_1,...,F_m$\, are the irreducible components of $\mbox{Exc}(p)$\,, then $\mbox{Supp}\widetilde{\Delta}\cup \bigcup_{j=1}^nF_j\cup \bigcup_{i=1}^kE_i$\, are in simple normal crossing position.
\end{enumerate}
\end{lemma}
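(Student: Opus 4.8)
The plan is to produce $\widetilde X$ by composing two resolution procedures: a relative desingularization that makes $(X,\Delta)$ log smooth along a prescribed boundary, followed by an embedded local uniformization of $\nu$ on the resulting model; in characteristic zero both ingredients are classical (Hironaka, Zariski, and for Abhyankar places in any characteristic the theorem of Knaf--Kuhlmann). Fix once and for all a rational top differential form $\omega\in\Lambda^n\Omega^1(K(X)/\mathbb C)$. For \emph{any} proper birational $p\colon\widetilde X\to X$ the divisor $\widetilde\Delta:=p^\ast(K_X^\omega+\Delta)-K_{\widetilde X}^\omega$ is supported on the exceptional locus plus the strict transform of $\Delta$, is $\mathbb R$-Cartier, and satisfies $p^\ast(K_X^\omega+\Delta)=K_{\widetilde X}^\omega+\widetilde\Delta$ tautologically; so the crepant pull back clause is purely definitional and I only have to arrange (a)--(d).

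First I would enlarge $S$ so that it contains all components $\Delta_i$ of $\Delta$, and run a canonical embedded resolution of $(X,\mbox{Supp}\,\Delta\cup\bigcup_{D\in S}D)$ to reduce to the case that $X$ is smooth and $\mbox{Supp}\,\Delta$ together with the prime divisors of $S$ are simple normal crossing. Next, for each prime divisor $D$ in this finite collection for which $\nu$ is \emph{not} composed with $\nu_D$, one has $A_\nu\not\subseteq A_{\nu_D}$, equivalently $\nu\notin\overline{\{\nu_D\}}$ in $R(K/\mathbb C)$; since $R(K/\mathbb C)=\varprojlim\mbox{Mod}(K/\mathbb C)$ and the closure of the point $\nu_D$ is the projective limit of the Zariski closures of its centres, there is a model on which $c(\nu)$ does not lie on the strict transform of $D$, and this property is preserved under all further blow ups. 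Blowing up canonically (to keep the normal crossing property) and passing to a high enough model, I may assume this simultaneously for every such $D$. Note that at most one divisorial valuation can be a proper coarsening of $\nu$ (there is a unique rank one coarsening), so at most one $D$ from the collection has $c(\nu)$ on its strict transform, and for that one $\nu$ \emph{is} composed with $\nu_D$.

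The essential step is the embedded local uniformization of $\nu$. Since $\nu$ is Abhyankar, $rr(\nu)+\dim(\nu)=\mbox{trdeg}(K/\mathbb C)=n$, so $\Gamma_\nu$ is free of rank $k:=rr(\nu)$ and $\kappa_\nu$ is finitely generated over $\mathbb C$ of transcendence degree $n-k$; by local uniformization, compatibly with the log smoothness already achieved, there is a further proper birational $\widetilde X$ on which $\widetilde\eta:=c_{\widetilde X}(\nu)$ is a regular point with $\dim\mathcal O_{\widetilde X,\widetilde\eta}=k$ admitting a regular system of parameters $r_1,\dots,r_k$ such that $\nu(r_1),\dots,\nu(r_k)$ is a $\mathbb Z$-basis of $\Gamma_\nu$ and $\nu$ is monomial in $r_1,\dots,r_k$. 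Then $\mathfrak m_{\widetilde\eta}=(r_1,\dots,r_k)$, which is (a); since $\overline{\{\widetilde\eta\}}$ has dimension $n-k=\mbox{trdeg}(\kappa_\nu/\mathbb C)$ and $\kappa(\widetilde\eta)\subseteq\kappa_\nu$, equality holds, which is (b); the separation step gives that no strict transform of a $\Delta_i$ or $D\in S$ with $\nu$ not composed with it passes through $\widetilde\eta$ (and any strict transform that \emph{does} pass through $\widetilde\eta$ must, by the normal crossing condition at $\widetilde\eta$, be one of the $V(r_l)$, forcing $\nu$ composed with it), which is (c). Finally $V(r_1),\dots,V(r_k)$ meet transversally at $\widetilde\eta$, so $\mbox{Supp}\,\widetilde\Delta\cup\bigcup_j F_j\cup\bigcup_i E_i$ is simple normal crossing near $\widetilde\eta$; a last application of canonical embedded resolution to this divisor globalises the normal crossing property while being an isomorphism near $\widetilde\eta$, so (a), (b), (c) persist and (d) holds. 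The main obstacle is precisely the embedded local uniformization step: realising $\nu$ as a monomial valuation at a \emph{regular} centre, with a regular system of parameters that is literally a $\mathbb Z$-basis of $\nu$ and with no residue field extension, all compatibly with the prescribed boundary divisor; everything else is bookkeeping with centres and simple normal crossings.
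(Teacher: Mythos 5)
Your proposal is correct in outline, but it reaches the crucial point of the lemma --- a regular centre $\widetilde{\eta}$ of codimension $k$ with $\kappa(\widetilde{\eta})=\kappa(\nu)$ and a regular system of parameters that is literally a $\mathbb Z$-basis for $\nu$ --- by a genuinely different route than the paper. You treat this as an input, citing embedded local uniformization of Abhyankar places (Knaf--Kuhlmann, or Hironaka plus monomialization in characteristic zero), and you correctly flag it as the main obstacle. The paper instead \emph{derives} this statement from ordinary log resolution by an elementary trick: pick an arbitrary $\mathbb Z$-basis $r_1,\dots,r_k\in\mathfrak m_\nu$, pass to a model high enough that the $r_i$ lie in the local ring of the centre and the residue field has stabilized at $\kappa(\nu)$, and then take one log resolution making the closures of the divisors $(r_i)$, the transformed boundary and the exceptional locus simple normal crossing. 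At the centre $\widetilde{\eta}$ the local ring is a regular UFD of dimension $k$ (because $\mbox{trdeg}(\kappa(\widetilde{\eta})/\mathbb C)=n-k$), each $r_i$ is a monomial in the irreducible local equations $s_j$ of the SNC components through $\widetilde{\eta}$, of which there are at most $k$; since the $\nu(r_i)$ generate $\Gamma_\nu$ and lie in the subgroup generated by the $\nu(s_j)$, there are exactly $k$ of them and $(s_1,\dots,s_k)$ is simultaneously a $\mathbb Z$-basis and a regular system of parameters. What your route buys is brevity and (via Knaf--Kuhlmann) independence of the characteristic at the uniformization step; what the paper's route buys is self-containedness modulo Hironaka, and the fact that the SNC position of the $E_i=(s_i)$ relative to $\widetilde{\Delta}$ and $\mbox{Exc}(p)$ comes out of the same log resolution rather than having to be restored afterwards --- in your version the final ``canonical embedded resolution to globalise SNC'' must be checked to be an isomorphism near $\widetilde{\eta}$, and the compatibility of the uniformization with the prescribed boundary is exactly the delicate part you would still owe. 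Your handling of the separation step (c) via the Zariski--Riemann space and the observation that at most one divisorial valuation is a coarsening of $\nu$ agrees with the paper's.
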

\begin{proof} Choose a $\mathbb Z$-basis $r_1,...,r_k\in \mathfrak{m}_{\nu}$\,. Let $p_1: X_1\longrightarrow X$\, be a birational complete model such that, if $c_{X_1}(\nu)=\eta_1\in X_1$\,, then 
$$r_1,...,r_k\in \mathfrak{m}_{\eta_1}\subset \mathcal O_{X_1,\eta_1}.$$
 This is always possible since a valuation ring $A_{\nu}$\, is the inductive limit of the system of all normal $(A,\mathfrak{m})\subset A_{\nu}$\, ordered by domination of normal local rings. Furthermore, for each local ring $(A, \mathfrak{m}),$\, such that there is a morphism $\Spec A\longrightarrow X,$\, there is a complete model $X_1,$\, a scheme point $\eta_1\in X_1$\,, a birational morphism of complete normal schemes $p_1: X_1\longrightarrow X$\, such that $(\mathcal O_{X_1,\eta_1},\mathfrak{m}_{\eta_1})\cong (A,\mathfrak{m})$\,.\\
  For the same reason, since $\kappa(\nu)/\mathbb C$\, is finitely generated, for sufficiently high $X_1\longrightarrow X,$\, we must have $\kappa(\nu)=\kappa(\eta_1)$.\\
 (In any case, $\kappa(\nu)/\kappa(\eta_1)$\, is a finite field extension. Lift generators 
$$\overline{\alpha_1},...\overline{\alpha_r}\in \kappa(\nu)\,\, \mbox{to}\,\, \alpha_1,...,\alpha_r\in A_{\nu}$$ and find a local ring $(A,\mathfrak{m})$ under $A_{\nu}$\, with $\alpha_1,...,\alpha_r\in A$\, and then apply the above arguement.) \\
Since for a given prime divisor $D$, the set of all valuations whose center lies on each birational model on the strict transform of $D$ is presicely the set of all valuations composed with $\nu_D$, if $\nu$\, is not composed with any prime divisor in the set $S$, we can achieve by further blowing up that the center of $\nu$\, on $X_1$\, does not lie on the strict transforms of any of the $D_i$\,. In particular, this holds for the prime components of $\Delta$\, and the over $X$ horizontal components of the principal divisors $\mbox{div}(r_i)$\,. Thus, possibly up to one horizontal component, we can assume that all prime components of $\mbox{div}(r_i)$\, are exceptional over $X$\,.\\ 
By further blowing up $X_1$\, we may even assume that $X_1$\, is smooth. Let $G_i$\, be the Zariski closure in $X_1$\, of the principal divisor $(r_i), i=1,...,k$\, in a neighbourhood $U$ of $\eta_1$\, such that $r_i$\, are regular functions on $U$\,. Let $(X_1,\Delta_1)$\, be the crepant pull back of the log pair $(X,\Delta)$\, via the birational morphism $p_1$\,. Let $H_1,...,H_l$\, be the exceptional divisors of $p_1$\,. Then, there is a log resolution $p_2:(\widetilde{X},\widetilde{\Delta})\longrightarrow (X_1,\Delta_1)$\, where $\widetilde{\Delta}$\, is again the crepant pull back, such that 
$$\mbox{Supp}(\widetilde{\Delta})\cup\bigcup_{j=1}^lp_{2,*}^{-1}(H_j)\cup \mbox{Exc}(p_2)\cup\bigcup_{i=1}^kp_2^*G_i$$ 
are in simple normal crossing position.\\
Let $\widetilde{\eta}=c_{\widetilde{X}}(\nu)\in \widetilde{X}$\,. The ring $\mathcal O_{\widetilde{X},\widetilde{\eta}}$\, is regular and thus an UFD. We have $r_i\in \mathfrak{m}_{\widetilde{\eta}}$\,. Let 
$$r_i=\prod_{j=1}^as_j^{n_{ij}}, n_{ij}\in \mathbb N_0$$
 be the decomposition into irreducibel elements. Then,  $p_2^*G_i$\, is the Zariski closure of $(r_i)$\, in a neighbourhood $\widetilde{U}$\, of $\widetilde{\eta}$\, (take $\widetilde{U}=p_2^{-1}(U),$\,) where $r_i$\, are now regular functions on $\widetilde{U}$\,and the support of $p_2^*G_1\cup p_2^*G_2\cup ...\cup p_2^*G_k$\, is in simple normal crossing position. Thus $a\leq k$\,. Since $(r_1,r_2,...,r_k)$\, is a $\mathbb Z$-basis for $\nu$\, and the $\nu(r_i)$\, are $\mathbb Z$-linear combinations of the $\nu(s_j)$\,, the elements $(s_1,s_2,...,s_a)$\, are also a $\mathbb Z$-basis for $\nu$\, and we necessarily must have $a=k$\,. Since each $s_i\in \mathfrak{m}_{\widetilde{\eta}}$\, and
 $$\mbox{Supp}\bigcup_{i=1}^kp_2^*G_i=\mbox{Supp}\bigcup_{i=1}^aE_i,$$
  where $E_i=(s_i), i=1,...,k,$\, the $(s_i), i=1,...,k$\, are smooth around $\widetilde{\eta}$\, and they cross normally, by basic commutative algebra, $s_1,...,s_k$\, form a regular sequence and we must have $\mathfrak{m}_{\widetilde{\eta}}=(s_1,s_2,...,s_k)$\,.\\
The crepant model 
$$p:(\widetilde{X},\widetilde{\Delta})\longrightarrow (X,\Delta)\,\, \text{together with the}\,\, \mathbb Z-\text{basis}\,\, (s_1,s_2,...,s_k)$$ satisfies all the requirements of the lemma.
\end{proof}
We summarize, using notation as above, what we have shown so far.
\begin{enumerate}[1]
\item If $\nu$\, is composed with a discrete divisorial valuation $\nu_1$, then  $\nu_1$ has divisorial center on $\widetilde{X}$\, and if $E_1$ is the corresponding prime divisor, then $E_1\cup \mbox{Exc}(f)\cup f_*^{-1}(D)$\, is a simple normal crossing divisor on $\widetilde{X}$\,. If $\nu$\, is not composed with any discrete divisorial valuation, we require that $\rm{Exc}(f)$\, is an SNC divisor and as $E_1$\, we take an arbitrary $f$-exceptional divisor passing through $\widetilde{\eta}$.
\item There are $k-1$ irreducible $f$-exceptional divisors $E_2,...,E_k$\, passing through $\widetilde{\eta}$ such that if   $(t_i=0)$\, is a local equation of  $E_i$\, for $i=1,...,k$\, on $\widetilde{X}$\, in $\mathcal O_{\widetilde{X},\widetilde{\eta}}$\,, then $\nu(t_1),\nu(t_2),...,\nu(t_k)$\, form a $\mathbb Z$-basis  of $\Gamma_{\nu}$\,.\\
There is no exceptional divisor different from $E_1,...,E_k$\, passing through $\widetilde{\eta}$.
\item We may achieve that $\mbox{div}_{\widetilde{\eta}}(f^*\Delta)=\sum_{i=1}^ka_iE_i$\, for some $a_i\in \mathbb R$\,. 
\end{enumerate}
For later usage, we make the following 
\begin{definition}
\mylabel{def:D4}
 Let $\nu\in R_{Ab}^{-,k}(L/\mathbb C)$\, be an arbitrary Abhyankar place of the function field $L$. A special center of $\nu$\, is a regular local ring $(A,\mathfrak{m},\kappa)$\,   such that $\nu$\, is centered above $A$, i.e., $A\subset A_{\nu}$\, and $A\longrightarrow A_{\nu}$\, is a local homomorphism, we have $\kappa=\kappa(\nu)$\, and there is a regular sequence $(r_1,...,r_k)$\, in $\mathfrak{m}$ generating $\mathfrak{m}$ such that $(r_1,...,r_k)$\, is a $\mathbb Z$-basis for $\nu$\,. If $\nu$\, is composed with a discrete valuation $\nu_1$\,, then we require that $r_1$\, is a local parameter for $\nu_1$\,. $(r_1,...,r_k)$\, is called the corresponding special $\mathbb Z$-basis.\\
If $\nu$\, is as above and $(X,D)$\, is an arbitrary log pair such that $L$ is the function field of $X$, then a special center $(A,\mathfrak{m})$\, with corresponding special $\mathbb Z$-basis $(r_1,...,r_k)$\, is called adapted to $(X,D)$\, if the following holds.
\begin{enumerate}[1]
\item $A$ lies above $X$, i.e., there is a morphism $\Spec A\longrightarrow X$\, inducing the identity on generic points.
\item If $\nu$\, is not composed with any discrete divisorial valuation, the strict transforms of any prime components of $D$ are required not to pass through the closed point of $\Spec A$\,.\\
If $\nu$\, is composed with some discrete divisorial valuation $\nu_1,$\, then the strict transform of any prime component of $D$ being different from the prime divisor on $X$ corresponding to $\nu_1$ (in case $\nu_1$ has divisorial center on $X$) do not pass through the closed point of $\Spec A$.
\item In case $\nu$\, is not composed with any discrete divisorial valuation, each $(r_i)_x$\, is exceptional over $X$.\\
In case $\nu$\, is composed with the discrete divisorial valuation $\nu_1$\,, then each $(r_i)_x, i>1$\, is exceptional over $X$.
\end{enumerate}
\end{definition}
Thus the above lemma basically says that for each Abhyankar place $\nu$\, and each log pair $(X,D)$\, there is a special center of $\nu$\, adapted to $(X,D)$\,.\\
\begin{proposition}
\mylabel{prop:P3}
 Let $\nu$\, be an Abhyankar place and $(X,D)$\, be a log variety of the function field $(L/\mathbb C)$\,. Let $(A,\mathfrak{m},\kappa)$\, be a special center of $\nu$\, adapted to $(X,D)$\, with corresponding special $\mathbb Z$-basis $(r_1,...,r_k)$\,. Let $\nu_i$\, be the discrete divisorial valuation corresponding to the Zariski closure of $\text{div}_A(r_i)$\,. Write $$a(X,D,\nu)=\sum_in_i\nu(r_i).$$
Then $n_i=a(X,D,\nu_i)$\,.
\end{proposition}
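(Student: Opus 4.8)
The statement relates the coefficients $n_i$ appearing in the expansion $a(X,D,\nu)=\sum_i n_i\nu(r_i)$ (with respect to the special $\mathbb Z$-basis $(r_1,\dots,r_k)$) to the ordinary log discrepancies $a(X,D,\nu_i)$ at the divisorial valuations $\nu_i$ attached to the $E_i = \overline{\mathrm{div}_A(r_i)}$. The natural strategy is to compute $a(X,D,\nu) = \nu(\omega) - \nu(K_X^\omega + D)$ directly using a sufficiently high crepant log resolution $p\colon (\widetilde X,\widetilde\Delta)\to (X,D)$ as produced by \prettyref{lem:L11}, on which the special center $(A,\mathfrak m)$ is realized as a regular local ring $\mathcal O_{\widetilde X,\widetilde\eta}$ with $\mathfrak m_{\widetilde\eta}=(r_1,\dots,r_k)$, the divisors $E_i$ being the (smooth, normally crossing) exceptional divisors through $\widetilde\eta$, and such that $\mathrm{div}_{\widetilde\eta}(p^*(K_X+D)) = \mathrm{div}_{\widetilde\eta}(K_{\widetilde X}^\omega + \widetilde\Delta) = \sum_{i=1}^k b_i E_i$ for suitable $b_i\in\mathbb R$. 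By definition of the classical log discrepancy along $E_i$ we have $b_i = \nu_i(K_{\widetilde X}^\omega + \widetilde\Delta) = 1 - a(X,D,\nu_i)$ — using $\prettyref{lem:L1966}$, i.e. that $\nu_{E_i}(\text{canonical section}) = \nu_{E_i}^{cl}(\cdot)+1$, to absorb the shift into the log discrepancy.

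First I would choose a rational top differential form $\omega$ whose divisor $K_{\widetilde X}^\omega$ is, near $\widetilde\eta$, supported on $\bigcup E_i$; concretely, take local coordinates extending $(r_1,\dots,r_k)$ to a regular system of parameters $(r_1,\dots,r_n)$ at a closed point of $\overline{\{\widetilde\eta\}}$ (so $\overline{r_{k+1}},\dots,\overline{r_n}$ is a transcendence basis of $\kappa(\nu)/\mathbb C$) and set $\omega = d^1 r_1\wedge\cdots\wedge d^1 r_n$. By the valuation formula \prettyref{thm:T49} applied with $\mathbb Z$-basis $(r_1,\dots,r_k)$ and lifted transcendence basis $(\overline{r_{k+1}},\dots,\overline{r_n})$ one gets $\nu(\omega)=\nu(r_1)+\cdots+\nu(r_k)=\sum_{i=1}^k \nu(r_i)$. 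Next I compute $\nu(K_X^\omega+D)$: since the pullback is crepant, $\nu(K_X^\omega+D) = \nu(K_{\widetilde X}^\omega+\widetilde\Delta)$, and near $\widetilde\eta$ this $\mathbb R$-Cartier divisor is $\sum_{i=1}^k (\nu_{E_i}^{cl}(\omega)+d_i^{\widetilde\Delta})E_i = \sum_{i=1}^k b_i E_i$ where the $b_i$ are exactly the multiplicities one reads off. Evaluating the valuation $\nu$ on this gives $\nu(K_X^\omega+D)=\sum_{i=1}^k b_i\,\nu(E_i)=\sum_{i=1}^k b_i\,\nu(r_i)$, because $t_i = r_i$ is a local equation of $E_i$ at $\widetilde\eta$. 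Subtracting, $a(X,D,\nu) = \sum_{i=1}^k(1-b_i)\nu(r_i)$, so $n_i = 1 - b_i$.

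It then remains to identify $1-b_i$ with $a(X,D,\nu_i)$. By \prettyref{lem:L1966}, the ``log valuation'' $\nu_{E_i}$ of the chosen $\omega$ satisfies $\nu_{E_i}(\omega)=\nu_{E_i}^{cl}(\omega)+1$; on the other hand, the multiplicity $b_i$ of $E_i$ in $K_{\widetilde X}^\omega+\widetilde\Delta$ is by definition $\nu_{E_i}^{cl}(K_X^\omega+D)$ (the coefficient of $E_i$ in the crepant pullback of $K_X+D$). Hence $a(X,D,\nu_i) = \nu_{E_i}(\omega)-\nu_{E_i}(K_X^\omega+D) = (\nu_{E_i}^{cl}(\omega)+1) - \nu_{E_i}^{cl}(K_X^\omega+D) = 1 - b_i = n_i$, as claimed. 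A subtlety to check is that the coefficients $n_i$ are well defined — i.e. independent of the special center and special $\mathbb Z$-basis chosen — but this follows since the $\nu(r_i)$ span $\Gamma_\nu\otimes\mathbb Q$ and $a(X,D,\nu)\in\Gamma_\nu\otimes\mathbb R$ is intrinsic (\prettyref{def:D16} and the preceding remark), while on the right-hand side each $\nu_i$ is the divisorial valuation of the prime divisor $\overline{\mathrm{div}_A(r_i)}$, which is determined by the data.

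**Main obstacle.** The delicate point is not the formal linear-algebra bookkeeping but verifying that, on the resolution $\widetilde X$, the crepant transform $K_{\widetilde X}^\omega+\widetilde\Delta$ really is supported near $\widetilde\eta$ only on the $E_i$ with the multiplicities claimed — in particular that no further exceptional divisor passes through $\widetilde\eta$ (this is guaranteed by item (d)/(4) of \prettyref{lem:L11} and the summary after it), and that the horizontal components of the $\mathrm{div}(r_i)$ have been blown away except possibly the one allowed component — so that the equality $\nu(K_X^\omega+D)=\sum b_i\nu(r_i)$ genuinely holds and is not corrupted by a stray divisor on which $\nu$ takes nonzero value. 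Handling the case where $\nu$ is composed with a discrete divisorial valuation $\nu_1$ (so $E_1$ may already be divisorial on $X$ and $r_1$ is a local parameter for $\nu_1$) requires a small separate check that the same formula $n_1 = a(X,D,\nu_1)$ persists; this is where one must be careful to use the $+1$ shift of \prettyref{lem:L1966} consistently.
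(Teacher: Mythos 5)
Your proposal is correct and follows essentially the same route as the paper: realize the special center on a smooth model, extend $(r_1,\dots,r_k)$ to a regular system of parameters at a closed point so that the residues $\overline{r_{k+1}},\dots,\overline{r_n}$ give a transcendence basis of $\kappa(\nu)$, apply the valuation formula of \prettyref{thm:T49} to express both $\nu(\omega)$ and $\nu(K_X^{\omega}+D)$ in terms of the $\nu(r_i)$, and then identify the coefficients with the divisorial log discrepancies $a(X,D,\nu_i)$. The only cosmetic difference is that you normalize $\omega=d^1r_1\wedge\cdots\wedge d^1r_n$ (so $f=1$) where the paper keeps a general $\omega=f\cdot d^1r_1\wedge\cdots\wedge d^1r_n$, and your explicit appeal to \prettyref{lem:L1966} for the $+1$ shift is already built into the paper's definition of $a(X,D,\nu)$ via the log valuation.
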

\begin{proof} Choose a nonsingular compactification $p:X'\longrightarrow X$ of $\Spec A\longrightarrow X$\,. Choose a  specialization $\mathcal O_{X',x}$\, of $A$, $x\in X'$\, a closed point  such that $r_i\in \mathcal O_{X',x}$\, and complete $(r_1,...,r_k)$\, to a regular sequence $(r_1,...,r_k,r_{k+1},...,r_n)$\, of $\mathfrak{m}_x$\, generating $\mathfrak{m}_x$\,, $n=\dim(X)$\,. Then $r_1,...,r_n$\, form a transcendence basis of $L/\mathbb C$\,. If $\omega$\, is any rational top differential form of $L/\mathbb C$\,, then writing 
$$\omega=f\cdot dr_1\wedge dr_2\wedge ...\wedge dr_k\wedge dr_{k+1}\wedge ...\wedge dr_n,$$ 
 as a rational section of $\mathcal O_{X'}(K_{X'}),$\, $\mbox{div}(\omega)$\, is equal at the local ring $A$ to $\rm{div}(f)$\, since the $n$-form $dr_1\wedge ...\wedge dr_n$\, is regular and nonvanishing in $A$. Thus $\rm{div}(r_1\cdot ...\cdot r_k\cdot f)$\, at $A$ is the divisor of $\omega$\, plus $\sum_{i=1}^kE_i$\,.\\
 Now consider the Weil divisor $K_X^{\omega}$\, on $X$. The strict transforms of its horizontal components at the image point of $\Spec A\longrightarrow X$\, are precisely the horizontal components of $(\omega)$\, at $A$, i.e., of $(f)$\, at $A$. It follows from the definition of a special center adapted to $(X,D)$\, that $(fr_1...r_k)- p^*(K_{X}^{\omega}+D)$\, has no horizontal components at $A$ except possibly $E_1$\,.\\
  Now the residues of $r_{k+1},...,r_n$\, modulo $\mathfrak{m}_A$ form a transcendence basis for $\kappa(A)$\, which follows immediately from the definition of a regular sequence. By definition of a special center we have $\kappa= \kappa(\nu)$\, and the residues of the $r_i, i=k+1,...,n$\, form a transcendence basis for $\kappa(\nu)$\,. Hence by \prettyref{thm:T48} and \prettyref{thm:T49}, 
$$a(X,D,\nu)=\nu(f\cdot r_1\cdot ...\cdot r_k)-\nu(p^*(K_X^{\omega}+D)).$$
 Because of the above remark, we may write 
 $$(fr_1....r_k)_A-p^*(K_X^{\omega}+D)_A=\sum_{i=1}^ka_iE_i.$$
  We thus get 
  $$a(X,D,\nu)=\sum_ia_i\nu(E_i)=\sum_ia_i\nu(r_i),$$
   hence $a_i=n_i$\,. For each $1\leq i\leq k$\,, the residues of the elements $r_1,...,\widehat{r_i},...,r_n$\,in $\kappa(E_i)$\, form a transcendence basis which again follows from the fact that the $r_i$\, form a regular sequence in $\mathcal O_{X',x}$\,. Thus, $\nu_{E_i}(r_j)=0$\, for $j\neq i$\, and it follows that 
 $$\nu_{E_i}(f\cdot r_1\cdot ...\cdot r_k)-\nu_{E_i}(p^*(K_X^{\omega}+D))=a_i\,\, \text{for}\,\, i=1,...,k.$$
    But again by \prettyref{thm:T49} it follows that 
    $$a(X,D,\nu_i)=\nu_i(r_i\cdot f)-\nu_i(p^*(K_X^{\omega}+D))$$ and by the above remark $\nu_{E_i}(f\cdot r_1\cdot ...,\cdot r_k)=\nu_{E_i}(f\cdot r_i)$\, which proves the assertion.
\end{proof}
We prove the following important consequence of the above lemma.
\begin{theorem}
\mylabel{thm:T10}
 Let $(X,D)$\, be a  log variety. Then the following implications hold true.
 \begin{enumerate}[1]
 \item If $(X,D)$ is klt, then it is globally klt.
 \item If $(X,D)$ is lc, then it is globally lc.
 \item If $(X,D)$\, is plt and $D=\sum_id_iD_i+\sum_jD_j, 0< d_i< 1$\, then $\nu_{D_j}$\, are the only Abhyankar lc places of $(X,D)$\,. 
 \item If $(X,D)$\, is dlt and $Z\subset X$\, is such that $(X,D)\mid_{X/Z}$\, is log smooth , $\codim_X(Z)\geq 2$\, and $a(X,D,\nu_E)>0\,\, \forall E$\, prime and $c_X(E)\subset Z$\, then there are no Abhyankar lc places over $Z$.
 \end{enumerate}
 \end{theorem}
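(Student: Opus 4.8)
The plan is to reduce all four assertions to the classical divisorial theory by means of \prettyref{lem:L11} and \prettyref{prop:P3}. Fix the log variety $(X,D)$ and a nontrivial Abhyankar place $\nu\in R_{Ab}(K(X)/\mathbb C)$, so that $k:=rr(\nu)\geq1$. By \prettyref{lem:L11} and the discussion following \prettyref{def:D4} there is a special centre $(A,\mathfrak m,\kappa)$ of $\nu$ adapted to $(X,D)$, arising from a crepant log resolution $p\colon\widetilde X\to X$ with $c_{\widetilde X}(\nu)=\widetilde\eta$, $\mathfrak m_{\widetilde\eta}=(r_1,\dots,r_k)$ and $E_i=(r_i)$ near $\widetilde\eta$; put $\nu_i=\nu_{E_i}$. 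Then \prettyref{prop:P3} gives, in $\Gamma_\nu\otimes_{\mathbb Z}\mathbb R$,
\[
a(X,D,\nu)=\sum_{i=1}^{k} a(X,D,\nu_i)\cdot\nu(r_i),\qquad\nu(r_i)>0 .
\]
Since $\Gamma_\nu$ is totally ordered, the sign of $a(X,D,\nu)$ is governed by the signs of the classical log discrepancies $a(X,D,\nu_i)$. If $(X,D)$ is klt, each $a(X,D,\nu_i)>0$ — for exceptional $\nu_i$ by definition of klt, and for $\nu_i$ with divisorial centre $E\subseteq X$ because $a(X,D,\nu_E)=1-\operatorname{coeff}_E(D)>0$ by \prettyref{lem:L1966} together with $\lfloor D\rfloor=0$ — so $a(X,D,\nu)>0$ and $(X,D)$ is globally klt; if $(X,D)$ is lc the same computation gives $a(X,D,\nu_i)\geq0$, hence $a(X,D,\nu)\geq0$ and $(X,D)$ is globally lc. This proves (1) and (2).

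For (3) one first checks $a(X,D,\nu_{D_j})=0$: by \prettyref{lem:L1966} and $\operatorname{coeff}_{D_j}(K_X^\omega+D)=\nu^{cl}_{D_j}(\omega)+1$ one gets $a(X,D,\nu_{D_j})=(\nu^{cl}_{D_j}(\omega)+1)-(\nu^{cl}_{D_j}(\omega)+1)=0$. Conversely let $\nu$ be a nontrivial Abhyankar lc place. As plt implies lc, the displayed formula forces $a(X,D,\nu_i)=0$ for all $i$ (each term is $\geq0$, $\nu(r_i)>0$, and the order is total). In a plt pair the only divisorial valuations of vanishing log discrepancy are the $\nu_{D_j}$: an exceptional one has positive log discrepancy by definition of plt, and for $E\subseteq X$ one has $a(X,D,\nu_E)=1-\operatorname{coeff}_E(D)$, which vanishes exactly for the coefficient-one components. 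Hence each $\nu_i$ equals some $\nu_{D_{j(i)}}$, so each $E_i$ is non-exceptional over $X$; but by \prettyref{def:D4} at most one member of a special $\mathbb Z$-basis (namely $r_1$, and only when $\nu$ is composed with a discrete divisorial valuation) can be non-exceptional over $X$. Therefore $k=1$, $\widetilde\eta$ is a divisorial point, $\nu=\nu_{E_1}$ is itself divisorial with log discrepancy zero, and so $\nu=\nu_{D_j}$ for some $j$.

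For (4) suppose, for contradiction, that $\nu$ is a nontrivial Abhyankar place with $c_X(\nu)\in Z$ and $a(X,D,\nu)=0$. Because $(X,D)$ is log smooth over $X\setminus Z$ while $c_X(\nu)\in Z$, I would run \prettyref{lem:L11} with the additional requirement that $p$ be an isomorphism over $X\setminus Z$; this is possible since the regular local ring dominating $A_\nu$ and all the blow-ups needed to achieve regularity and simple normal crossings may be taken with centres over $\overline{c_X(\nu)}\subseteq Z$. Then every $p$-exceptional prime divisor has centre contained in $Z$. As in (3), lc and $a(X,D,\nu)=0$ force $a(X,D,\nu_i)=0$ for all $i$. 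If $k\geq2$, then $E_2,\dots,E_k$ are exceptional over $X$ by \prettyref{def:D4}, hence $p$-exceptional with $c_X(\nu_i)\subseteq Z$, and the hypothesis gives $a(X,D,\nu_i)>0$ — a contradiction. If $k=1$, then $\mathcal O_{\widetilde X,\widetilde\eta}$ is a DVR and $\nu=\nu_{E_1}$: either $E_1$ has divisorial centre $E\subseteq X$, forcing $c_X(\nu)$ to be a codimension-one point and hence not in $Z$, against the hypothesis; or $E_1$ is $p$-exceptional with $c_X(E_1)\subseteq Z$, whence $a(X,D,\nu)=a(X,D,\nu_{E_1})>0$, again impossible. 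So there are no Abhyankar lc places over $Z$.

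The step I expect to require the most care is precisely the refinement of \prettyref{lem:L11} used in (4): verifying that every modification in its proof — dominating $A_\nu$ by a regular local ring on a high model, making the centre of $\nu$ have a regular simple-normal-crossing neighbourhood, and passing to the crepant pull-back — can be performed by blow-ups centred over $\overline{c_X(\nu)}\subseteq Z$, which is legitimate exactly because $(X,D)$ is already log smooth over the complement of $Z$. Granting this, exceptionality of the $E_i$ over $X$ upgrades to containment of their centres in $Z$, and the hypothesis on divisorial log discrepancies over $Z$ finishes the argument; the divisorial computations via \prettyref{lem:L1966} and the bookkeeping of which members of a special $\mathbb Z$-basis are exceptional are routine.
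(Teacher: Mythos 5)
Your proof follows the paper's route exactly: produce a special centre via \prettyref{lem:L11} and \prettyref{def:D4}, invoke \prettyref{prop:P3} to write $a(X,D,\nu)=\sum_i a(X,D,\nu_i)\,\nu(r_i)$ with $\nu(r_i)>0$, and reduce each assertion to the classical divisorial log discrepancies $a(X,D,\nu_i)$. If anything, your treatments of (3) and (4) are more careful than the paper's one-line versions: the paper does not explain why the exceptional $\nu_i$ occurring in (4) has its centre contained in $Z$ (which is needed to apply the hypothesis $a(X,D,\nu_E)>0$, since over the log smooth locus a dlt pair can have exceptional divisors of log discrepancy zero), and your proposed refinement of \prettyref{lem:L11} — performing all blow-ups over $\overline{c_X(\nu)}\subseteq Z$ so that the resolution is an isomorphism over $X\setminus Z$ — is precisely the missing step and is legitimate for the reason you give.
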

\begin{proof} Let $\nu$\,  be an Abhyankar place. Choose a special center $(A,\mathfrak{m},\kappa)$\, with corresponding special $\mathbb Z$-basis $(r_1,...,r_k)$\, adapted to $(X,D)$\, and let $\nu_i$\, be the discrete divisorial valuations corresponding to $r_i$ as above. By the previous proposition, if we write $a(X,D,\nu)=\sum_in_i\nu(r_i)$\,, then $n_i=a(X,D,\nu_i)$\,.
\begin{enumerate}[1]
\item If $(X,D)$\, is klt, then all $a(X,D,\nu_i)>0$, hence $a(X,D,\nu)>0$\,.
\item If $(X,D)$\, is lc, then all $a(X,D,\nu_i)\geq 0$\, and also $a(X,D,\nu)\geq 0$\,.
\item If $(X,D)$\, is plt and $r(\nu)\geq 2$\, then one $\nu_i$\, is exceptional over $X$ and $a(X,D,\nu_i)>0$\,. Hence also $a(X,D,\nu)>0$\,.
\item If $(X,D)$\, is dlt, and $\nu$\, is centered above $Z$ and $r(\nu)\geq 2$\, again one $\nu_i$\, must be exceptional and $a(X,D,\nu_i)>0$\, implies $a(X,D,\nu)>0$\,.
\end{enumerate} 
\end{proof}
\begin{remark}
\mylabel{rem:10} $(3)$\, of the above theorem implies in particular, that the locus $\mbox{NKLT}(X,D)$\,, consisting of all $\nu\in R_{Ab}(K(X)/\mathbb C)$\, with $a(X,D,\nu)\leq 0$\,  need not be closed inside the Riemann-variety $R_{Ab}(K(X)/\mathbb C)$\,, since there are Abhyankar places in the topological closure $\overline{\nu_{D_j}}$\, which we have shown not to belong to $\mbox{NKLT}(X,D)$\,.
\end{remark}
 \begin{corollary}\mylabel{cor:C10} With notation as above, the Abhyankar place $\nu\in R_{Ab}(K/\mathbb C)$\, is a log canonical center iff for all $i=1,2,...,k$\, $\nu_{E_i}$\, is a log canonical center. \\
 Conversely, if $p:(\widetilde{X},\widetilde{\Delta})\longrightarrow (X,\Delta)$\, is a log resolution of the lc log pair $(X,\Delta)$\, and $\bigcup_{i=1}^kE_i\subset \widetilde{X}$\, is an exceptional SNC-divisor, such that each $E_i$\, is an lc place of $(X,\Delta)$\, then there are Abhyankar places $\nu\in R_{Ab}^k(K/\mathbb C)$\, with $\overline{c_{\widetilde{X}}(\nu)}=\bigcap_{i=1}^kE_i$\, such that $\nu$\, is a (generalized) lc place of $(X,\Delta)$\,.
 \end{corollary}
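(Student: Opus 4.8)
The plan is to derive both halves of the corollary from \prettyref{prop:P3}, combined with the flag--valuation construction of \prettyref{ex:E61}. For the equivalence, I would fix a special center $(A,\mathfrak{m},\kappa)$ of $\nu$ adapted to $(X,\Delta)$ with special $\mathbb{Z}$-basis $(r_1,\dots,r_k)$ and associated divisorial valuations $\nu_{E_i}$, so that \prettyref{prop:P3} expresses $a(X,\Delta,\nu)=\sum_{i=1}^{k}n_i\,\nu(r_i)$ with $n_i=a(X,\Delta,\nu_{E_i})$. Since $(X,\Delta)$ is log canonical, $a(X,\Delta,\nu_{E_i})\ge 0$ for every $i$ (this is contained in \prettyref{thm:T10}), so $n_i\ge 0$, while $r_i\in\mathfrak{m}\subset\mathfrak{m}_{\nu}$ forces $\nu(r_i)>0$ in $\Gamma_{\nu}\otimes_{\mathbb{Z}}\mathbb{R}$. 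A sum of non-negative multiples of strictly positive elements of an ordered group vanishes exactly when each term does, i.e. exactly when every $n_i=0$; this is precisely the statement that $\nu$ is an lc place if and only if each $\nu_{E_i}$ is, and since every summand is already $\ge 0$ the conclusion does not depend on the chosen adapted special center.

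For the converse, let $Z$ be an irreducible component of $\bigcap_{i=1}^{k}E_i$ (non-empty, otherwise the statement is vacuous) with generic point $\eta$. As $\bigcup_i E_i$ is SNC, $\mathcal{O}_{\widetilde{X},\eta}$ is a regular local ring of dimension $k$ with regular system of parameters $t_1,\dots,t_k$, where $(t_i=0)$ is the local equation of $E_i$ at $\eta$. Taking $D_i$ to be the component of $E_1\cap\cdots\cap E_i$ through $\eta$, the flag $\widetilde{X}=D_0\supsetneq D_1\supsetneq\cdots\supsetneq D_k$ satisfies the conditions of \prettyref{ex:E61}, so the associated composed flag valuation $\nu$ lies in $R^{k,k}_{Ab}(K/\mathbb{C})\subseteq R^{k}_{Ab}(K/\mathbb{C})$, has $c_{\widetilde{X}}(\nu)=\eta$, hence $\overline{c_{\widetilde{X}}(\nu)}=Z$; running over the components of $\bigcap_i E_i$ produces one such $\nu$ per component, and $\overline{c_{\widetilde{X}}(\nu)}=\bigcap_i E_i$ when this intersection is irreducible. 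By construction $\Gamma_{\nu}\cong\mathbb{Z}^{k}$ with $\mathbb{Z}$-basis $\nu(t_1),\dots,\nu(t_k)$, the divisorial constituent of $\nu$ is $\nu_{E_1}$ with local parameter $t_1$, the $t_i$ form a regular system of parameters of $\mathcal{O}_{\widetilde{X},\eta}$, and $\kappa(\nu)=\kappa(\eta)$.

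I would then check that $(\mathcal{O}_{\widetilde{X},\eta},\mathfrak{m}_{\eta},\kappa(\eta))$ is a special center of $\nu$ adapted to $(X,\Delta)$ in the sense of \prettyref{def:D4}: requirement (1) is supplied by $p$; requirement (2) holds because the divisorial constituent $\nu_{E_1}$ of $\nu$ is $p$-exceptional and the SNC property of $\bigcup_i E_i\cup p_*^{-1}(\Delta)$ forces the strict transform of every component of $\Delta$ to meet $\bigcap_i E_i$ in codimension $\ge k+1$, hence to miss $\eta$; requirement (3) holds since each $E_i$ is $p$-exceptional. Then \prettyref{prop:P3} gives $a(X,\Delta,\nu)=\sum_{i=1}^{k}a(X,\Delta,\nu_{E_i})\,\nu(t_i)=0$ because each $E_i$ is an lc place by hypothesis, so $\nu$ is a generalized lc place of $(X,\Delta)$.

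The technical heart of the argument is this adaptedness verification in the converse, where the SNC geometry of the log resolution is exactly what is needed to keep the strict transforms of the boundary components off $\eta$; by contrast the order-theoretic step in the equivalence --- a non-trivial non-negative combination of positive elements of $\Gamma_{\nu}\otimes_{\mathbb{Z}}\mathbb{R}$ is positive --- and the verification of the structural properties of the flag valuation are entirely routine given \prettyref{ex:E61} and \prettyref{prop:P3}.
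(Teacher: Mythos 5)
Your proposal is correct and follows essentially the same route as the paper: the equivalence is read off from the formula $a(X,\Delta,\nu)=\sum_i a(X,\Delta,\nu_{E_i})\,\nu(r_i)$ of \prettyref{prop:P3} (using lc-ness to get nonnegativity of each term), and the converse uses the incomplete-flag valuation of \prettyref{ex:E61} whose local equations $(t_1,\dots,t_k)$ form a regular system of parameters at the generic point of $\bigcap_i E_i$, fed back into \prettyref{prop:P3}. You are somewhat more explicit than the paper in verifying the adaptedness conditions of \prettyref{def:D4} and in handling reducibility of $\bigcap_i E_i$; the only item of the paper's proof you omit is the supplementary construction of rank-one (monomial) Abhyankar lc places in the case $k=n$, which is not needed for the stated claim.
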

 \begin{proof} The first part is clear from the last line of the proof of \prettyref{prop:P3}. For the second part, take e.g. the incomplete flag 
 $$\mathcal F: E_1\supset E_1\cap E_2\supset ....\supset E_1\cap E_2\cap ...\cap E_k,$$
 where $E_i$ are lc places in simple normal crossing position.
 The corresponding discrete algebraic rank $k$ valuation $\nu_{\mathcal F}$\, has center $V=\bigcap_{i=1}^kE_i$\, and if $\mathfrak{m}_{\eta}$\, is the maximal ideal in the local ring $\mathcal O_{\widetilde{X},\eta_V}$\, and $r_i=0$\, is a local equation for $E_i$\, at $\eta_V$\, (generic point of $V$), then $(r_1,...,r_k)$\, is a regular sequence that generates $\mathfrak{m}_{\eta}$\,. It is easily seen to be a $\mathbb Z$-basis of $\nu_{\mathcal F}$\, and we are in the situation of \prettyref{prop:P3} in order to conclude that $\nu_{\mathcal F}$\, is an lc place of $(X,\Delta)$\,. For $k=n,$\, one can also construct easily rank one Abhyankar lc places. Take the regular system of parameters $(r_1,r_2,...,r_n),$\, then $A=\mathbb C[r_1,...,r_n]_{(r_1,...,r_n)}$\, is a regular local ring with $\mathcal O_{\widetilde{X},\eta}$\, lying etale over $A$ and thus both local rings have the same completion $\mathbb \mathbb C[[r_1,...,r_n]].$\, Then continue as in 'Examples of zero dimensional valuations', i.e., choose $n$ $\mathbb Q$-linearely independend real numbers $\alpha_i\in \mathbb R_{>0}$\, and put $\nu(r_i)=\alpha_i$\,, and extend to $\mathbb C[[r_1,...,r_n]]$\, and then restrict to $\mathcal O_{\widetilde{X},\eta}$\,. We obtain a rank one Abhyankar place on $K/\mathbb C$\, with center $\eta=\bigcap_{i=1}^nE_i$\, on $\widetilde{X}$\,.
 \end{proof}
 As a further application, we prove
 \begin{proposition}\mylabel{prop:P1960}(Monotonicity for Abhyankar places)\\
 Let $(X,D)$\, be a normal $\mathbb Q$-factorial lc log pair and $\phi:(X,D)\longrightarrow (Y,D_Y)$\, be a flipping contraction. Let $\phi^+: (X^+,D^+)\longrightarrow (Y,D_Y)$\, be its log flip. Then for all $\nu\in R_{Ab}(K(X)/\mathbb C)$\,
 \begin{gather*}a(X,D,\nu)\leq a(X^+,D^+,\nu)\\
 \text{If}\,\, \nu\in \overline{\text{Exc}(\phi)}=\overline{\text{Exc}(\phi^+)}\,\,\text{then}\\
 a(X,D,\nu)< a(X^+,D^+,\nu).
 \end{gather*}
 \end{proposition}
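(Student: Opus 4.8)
The plan is to reduce the assertion for an arbitrary Abhyankar place to the classical monotonicity inequality for prime divisors (\cite{Matsuki}[Lemma 9-1-3]), which in turn follows from the negativity lemma. First I fix a rational top differential form $\omega$; by the first Remark after \prettyref{def:D16} neither $a(X,D,\nu)$ nor $a(X^+,D^+,\nu)$ depends on this choice, and since a flip does not change the function field we may regard $\omega$ as a form on $K(X)=K(X^+)$. Given $\nu\in R_{Ab}(K(X)/\mathbb C)$, I apply the construction of \prettyref{lem:L11} over a sufficiently high log resolution $W$ dominating both $X$ and $X^+$ to obtain a special center $(A,\mathfrak m,\kappa)$ of $\nu$ with special $\mathbb Z$-basis $(r_1,\dots,r_k)$ that is adapted simultaneously to $(X,D)$ and to $(X^+,D^+)$: this is possible because $\phi$ is small, so a prime divisor on $W$ is exceptional over $X$ if and only if it is exceptional over $X^+$, and $D^+$ is the strict transform of $D$, so the conditions of \prettyref{def:D4} for the two pairs are compatible. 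Letting $\nu_i$ be the divisorial valuation of the Zariski closure of $\mathrm{div}_A(r_i)$, \prettyref{prop:P3} applied to each pair gives
\[
a(X,D,\nu)=\sum_i a(X,D,\nu_i)\,\nu(r_i),\qquad a(X^+,D^+,\nu)=\sum_i a(X^+,D^+,\nu_i)\,\nu(r_i),
\]
and since $r_i\in\mathfrak m_\nu$ forces $\nu(r_i)>0$, it suffices to prove $a(X,D,\nu_i)\le a(X^+,D^+,\nu_i)$ for each $i$, with strict inequality for at least one $i$ whenever $c_Y(\nu)$ lies in the flipping locus $Z\subset Y$.

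For the divisorial case, let $E$ be a prime divisor on $W$, with structure morphisms $p:W\to X$ and $q:W\to X^+$, and put $g=\phi\circ p=\phi^+\circ q:W\to Y$, which is proper and birational. Since $\nu_E(\omega)$ is computed on $W$ and enters both discrepancies with the same value, \prettyref{def:D16} gives
\[
a(X,D,\nu_E)-a(X^+,D^+,\nu_E)=\mathrm{ord}_E\bigl(q^*(K_{X^+}^{\omega}+D^+)\bigr)-\mathrm{ord}_E\bigl(p^*(K_X^{\omega}+D)\bigr)=-\mathrm{ord}_E(B),
\]
where $B:=p^*(K_X^{\omega}+D)-q^*(K_{X^+}^{\omega}+D^+)$. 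Because $\phi$ and $\phi^+$ are isomorphisms in codimension one and $D^+$ is the strict transform of $D$, the $\mathbb R$-divisors $K_X^{\omega}+D$ and $K_{X^+}^{\omega}+D^+$ have the same pushforward to $Y$, hence $g_*B=0$. Moreover $-B=p^*\bigl(-(K_X^{\omega}+D)\bigr)+q^*(K_{X^+}^{\omega}+D^+)$ is $g$-nef, being a sum of the pullback of the $\phi$-ample divisor $-(K_X^{\omega}+D)$ and the pullback of the $\phi^+$-ample divisor $K_{X^+}^{\omega}+D^+$. By the negativity lemma (\cite{Kollar}[3.39]), $g_*B=0$ effective together with $-B$ being $g$-nef forces $B\ge 0$; therefore $\mathrm{ord}_E(B)\ge 0$, so $a(X,D,\nu_E)\le a(X^+,D^+,\nu_E)$, and hence $a(X,D,\nu)\le a(X^+,D^+,\nu)$ for every Abhyankar place $\nu$.

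For strictness, suppose $c_Y(\nu)\in Z$. In the construction of \prettyref{lem:L11} I arrange, by first blowing up along $\mathrm{Exc}(\phi)=\phi^{-1}(Z)$, that one of the divisors $E_i$ of the special $\mathbb Z$-basis satisfies $c_X(E_i)\subseteq\mathrm{Exc}(\phi)$, equivalently $c_Y(E_i)\subseteq Z$. For such an $E_i$ there is a curve $C\subseteq W$ with $g(C)$ a point and $p(C)$ a curve contracted by $\phi$, so $B\cdot C=(K_X^{\omega}+D)\cdot p_*C-q^*(K_{X^+}^{\omega}+D^+)\cdot C<0$, the first term being negative by $\phi$-ampleness and the second nonnegative by $g$-nefness; since $B\ge 0$ this forces $E_i\subseteq\mathrm{Supp}(B)$, i.e. $\mathrm{ord}_{E_i}(B)>0$, which is the sharp form of the negativity lemma. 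Hence $a(X,D,\nu_i)<a(X^+,D^+,\nu_i)$, and by the displayed formula $a(X,D,\nu)<a(X^+,D^+,\nu)$. Finally, $\overline{\mathrm{Exc}(\phi)}=\overline{\mathrm{Exc}(\phi^+)}$ is precisely the set of Abhyankar places with center in $Z$, so this is exactly the stated strict case.

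The step I expect to need the most care is the first one: producing a single special center with a single special $\mathbb Z$-basis adapted to both pairs simultaneously, and, for strictness, arranging within \prettyref{lem:L11} that at least one divisor $E_i$ dominates a component of the flipping locus, so that strictness at the divisorial place $\nu_i$ propagates to $\nu$. The divisorial monotonicity of the second and third paragraphs, including the sharp negativity lemma, is standard once the $\mathbb R$-Cartier and relative (anti-)ampleness inputs are in place.
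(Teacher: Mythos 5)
Your proof is correct in substance but takes a genuinely different route from the paper's. The paper does not reduce to the divisorial case at all: it takes a common log resolution $X\stackrel{\sigma}\longleftarrow V\stackrel{\sigma^+}\longrightarrow X^+$, quotes Matsuki's movable/fixed-part computation to get $\sigma^*(l(K_X+D))=(\sigma^+)^*(l(K_{X^+}+D^+))+\sum_i r_iF_i$ with $r_i>0$ and $\sigma(\cup_iF_i)=\text{Exc}(\phi)$, and then simply applies $\nu$ to this identity of $\mathbb R$-Cartier divisors on $V$: since both log discrepancies share the term $\nu(\omega)$, their difference is a positive combination of the $\nu(F_i)$, which is $\geq 0$ because $\nu$ is centered on $V$, and $>0$ exactly when $c_V(\nu)$ lies on some $F_{i_0}$, i.e.\ when $\nu\in\overline{\text{Exc}(\phi)}$. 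This exploits the fact that \prettyref{def:D16} defines $a(X,D,\nu)$ by applying $\nu$ directly to Cartier data, so no appeal to \prettyref{prop:P3} is needed. Your route re-proves the divisor inequality $B\geq 0$ from the negativity lemma (which is essentially what Matsuki's lemma encodes) and then lifts it to $\nu$ through the special-center decomposition; this buys nothing extra here and costs you the two delicate steps you yourself flag. The first (a special center adapted to both pairs with one $\mathbb Z$-basis) does go through because $\phi$ is small. The second — producing $E_{i_0}$ with $\operatorname{ord}_{E_{i_0}}(B)>0$ — is the genuinely fragile point: your curve argument presupposes a curve $C\subseteq E_{i_0}$ contracted by $g$ with $p_*C\neq 0$, which is not automatic for an arbitrary member of the special basis. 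The clean repair is to apply the fiberwise form of the negativity lemma to get $g^{-1}(c_Y(\nu))\subseteq\text{Supp}(B)$, then note that $p_*B=0$ makes every component of $\text{Supp}(B)$ $p$-exceptional, and that by \prettyref{lem:L11} every $p$-exceptional prime divisor through $c_W(\nu)$ is one of the $E_i$; the component of $\text{Supp}(B)$ through $c_W(\nu)$ is therefore some $E_{i_0}$. With that emendation your argument is complete, but the paper's direct valuation of the b-divisor identity is shorter and avoids the issue entirely.
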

 \begin{proof} The proof runs along the same lines as in \cite{Matsuki}[Lemma 9-1-3, pp. 320-321]. We choose a common log resolution $X\stackrel{\sigma}\longleftarrow V\stackrel{\sigma^+}\longrightarrow X^+$\, of $(X,D)$\, and $(X^+,D^+)$\, and choose a sufficiently divisible $l\in \mathbb N$\, such that $l(K_X+D)$\, is Cartier and $l(K_X^++D^+)$\, is $\phi^+$-very ample. Write 
 $$\sigma^*(l(K_X+D))=M+\sum_ir_iF_i,$$
  where $M$\, is the $\phi\circ \sigma$-movable part and $\sum_iF_i$\, with $r_i> 0$\, is the $\phi\circ \sigma$-fixed part. By the calculations in \cite{Matsuki}[Lemma 9-1-3, pp.320-321], we have $M=(\sigma^+)^*(l(K_X^++D^+))$\, and $\sigma(\cup_iF_i)=\text{Exc}(\phi)$\,. Thus, as $\mathbb Q$-b-Cartier divisors, we have
  $$\overline{K_X+D}>\overline{K_X^++D^+}$$ and  for $\nu\in R_{Ab}(K(X)/\mathbb C)$\, we have 
  \begin{gather*}a(X,D,\nu)=\nu(\omega)-\nu(K_X^{\omega}+D)\leq \nu(\omega)-\nu(K_{X^+}^{\omega}+D^+)\\
  =a(X^+,D^+,\nu).
  \end{gather*}
  If $\nu\in \overline{\text{Exc}(\phi)}$\,, then its center on $V$ is in $\cup_iF_i$\, because 
  $$\overline{\text{Exc}(\phi)}=\overline{\sigma^{-1}(\text{Exc}(\phi))}=\cup_i\overline{F_i}.$$
   Hence there is $i_0$\, such that $c_V(\nu)\in F_{i_0}$\, and $\nu(F_{i_0})>0$\,. Then,
\begin{gather*}  \nu(\overline{K_X^{\omega}+D})=\nu((\overline{K_X+D})_V)=\nu(\sigma^{+*}(K^{\omega}_{X^+}+D^+)+\sum_ir_iF_i)\\
  = \nu(K^{\omega}_{X^+}+D^+)+\sum_ir_i\cdot\nu(F_i)\geq \nu(K_{X^+}^{\omega}+D^+)+r_{i_0}\cdot\nu(F_{i_0})\\
  >\nu(K_{X^+}^{\omega}+D^+),
  \end{gather*}
  which implies $a(X,D,\nu)<a(X^+,D^+,\nu)$\, in this case. Moreover if $\nu\notin \overline{\text{Exc}(\phi)}$\,, then, as in $R(K(X)/\mathbb C)\backslash \overline{\text{Exc}(\phi)}$\, we have $\overline{(K_X^{\omega}+D)}=\overline{(K^{\omega}_{X^+}+D^+)}$\, as $\mathbb Q$-b-Cartier divisors, this implies $a(X,D,\nu)=a(X^+,D^+,\nu)$\,.
  \end{proof}
\section{Adjunction for non-klt-centers}
If $(X,D)$\, is a log pair such that $D=S+\sum_ia_iD_i, D_i\neq S$\,,$S$ irreducible and normal, the usual adjunction problem is to define a divisor $D_S$\, on $S$\, such that the adjunction formula $K_X+D\mid_S=K_S+D_S$\, holds. A further problem is to compare the singularity types of the pairs $(X,D)$\, and $(S,D_S)$\,.\\ 
A first step towards a generalization of this problem is to observe that $a(X,D,\nu_S)=0$\,. We will thus consider the following problem.\\
 \begin{enumerate}[i]
 \item $(X,D)$\, is  a log pair;
 \item  $\nu\in R_{Ab}^{-,k,}(X)$\, is an Abhyankar place with $0< k< \dim(X);$
 \item $c_X(\nu)=x$\, is such that the residue field $k_x$\, of $\mathcal O_{X,x}$\, is equal to $k_{\nu};$\,
 \item $\overline{\{x\}}$\, is normal and
 \item $a(X,D,\nu)=0$\,.
 \end{enumerate}
  For instance, $\nu$\, could be given by any incomplete flag $$\underline{S}:S_{1}\subset S_2\subset ...\subset S_k\subset S_{k+1}=X$$ such that $S_i$\, is irreducible and of codimension one in $S_{i+1}$\, and $S_{i+1}$\, is nonsingular at the generic point of $S_i$\,. This defines  the well known  Abhyankar places $\nu_{\underline{S}}\in R_{Ab}^{k,k,}(K(X)/\mathbb C)$\,.\\
The problem is as in the classical case to restrict the log pair $(X,D)$\, to a log pair $(X_{\nu},D_{\nu})$\, with $X_{\nu}=\overline{x}=\overline{c_X(\nu)}$\,. \\
The first thing we have to do is to restrict $K_X^{\omega}+D$\, as a $b$-Cartier log-divisor to a $b$-Cartier log-divisor of $k_{\nu}$\,.\\
We start with a lemma. 
\begin{lemma}
\mylabel{lem:L8} Let $X/\mathbb C$ be a variety , $D$ a Cartier divisor on $X$ and $\nu\in R_{Ab}^{-,k}(X/\mathbb C)$\, be an Abhyankar place centered above $X$ such that $\nu(D)=0$\,. Then we may define a $b$-Cartier divisor $D_{\nu}'$\, in $k_{\nu},$\, such that for any divisorial discrete valuation $\nu_1$\, of $k_{\nu}$\, with $\nu_1(D'_{\nu})=a$\,, if we form the composed valuation $\mu=\nu\circ \nu_1$\, of $R(K(X)/\mathbb C)$\,, then $\mu(D)=(0,a)\in \Gamma_{\mu}$\,.\\
If furthermore the residue field of the center $x$ of $\nu$ on $X$ is equal to $k_{\nu}$\, and $D$ is effective, then there is a Cartier divisor $D_{\nu}$\, on $\overline{c_X(\nu)}:=\overline{\{x\}}$\, such that $\overline{D_{\nu}}=D_{\nu}'$\, as $b$-Cartier divisors on $k_{\nu}$\,.
\end{lemma}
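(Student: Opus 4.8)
\emph{Proof plan.} The plan is to realise $D'_{\nu}$ as the restriction of $D$ — viewed as a Cartier divisor on the Zariski--Riemann space — along the natural inclusion of $R(\kappa_{\nu}/\mathbb{C})$, and then to read off the asserted formula $\mu(D)=(0,a)$ from the fundamental exact sequence $(**)$ governing composed valuations.

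Recall first (see the Introduction and \cite{Guenther}) that the Cartier divisor $D$ on the model $X$ determines a $b$-Cartier divisor $\overline D$ of $K(X)/\mathbb{C}$, i.e. a genuine Cartier divisor on the locally ringed space $R(K(X)/\mathbb{C})$, with $\overline D(\nu_E)=\nu_E(D)$ for prime divisors $E$; the hypothesis $\nu(D)=0$ says exactly that $\nu\notin\mbox{Supp}(\overline D)$. Next, the rule $\nu_1\mapsto\nu\circ\nu_1$ defines a closed immersion of locally ringed spaces
$$c:\ R(\kappa_{\nu}/\mathbb{C})\ \hookrightarrow\ R(K(X)/\mathbb{C}),$$
identifying $R(\kappa_{\nu}/\mathbb{C})$ with the irreducible closed subspace $\overline{\{\nu\}}=\{\mu:A_{\mu}\subseteq A_{\nu}\}$, whose generic point is $\nu$ (the stalk map is the reduction $A_{\nu\circ\nu_1}\twoheadrightarrow A_{\nu\circ\nu_1}/\mathfrak{m}_{\nu}=A_{\nu_1}$, which one checks to be local). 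Since every open set meeting $\overline{\{\nu\}}$ contains $\nu$, and $\nu\notin\mbox{Supp}(\overline D)$, the local equations of $\overline D$ along $\overline{\{\nu\}}$ may be taken to be units of $A_{\nu}$; hence the pull-back $D'_{\nu}:=c^{*}\overline D$ is a well-defined Cartier divisor on $R(\kappa_{\nu}/\mathbb{C})$, i.e. a $b$-Cartier divisor of $\kappa_{\nu}/\mathbb{C}$. (Down to earth, passing as in the proof of \prettyref{lem:L11} to a high model $X'>X$ on which $\eta:=c_{X'}(\nu)$ has residue field $\kappa_{\nu}$, one has $D'_{\nu}=\overline{D|_{\overline{\{\eta\}}}}$ whenever $\overline{\{\eta\}}\not\subseteq\mbox{Supp}(D)$, e.g. whenever $D$ is effective.)

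It remains to verify the characterising property, and here a composed-valuation computation is quickest. Let $\nu_1\in R^{cl}(\kappa_{\nu}/\mathbb{C})$ and put $\mu:=\nu\circ\nu_1$. Since $c_{X'}(\mu)$ is a specialization of $\eta$, a local equation $g\in K(X)^{*}$ of $D$ near $c_{X'}(\mu)$ is also a local equation near $\eta$, so $\nu(g)=\nu(D)=0$ and $\mu(D)=\mu(g)$. In
$$0\longrightarrow\Gamma_{\nu_1}\longrightarrow\Gamma_{\mu}\longrightarrow\Gamma_{\nu}\longrightarrow 0$$
the image of $\mu(g)$ in $\Gamma_{\nu}$ is $\nu(g)=0$, so $\mu(D)$ lies in the convex subgroup $\Gamma_{\nu_1}\cong\mathbb{Z}$; and by the explicit description of that subgroup as $\kappa_{\nu}^{*}/A_{\nu_1}^{*}$ recalled in the review of valuation theory, $\mu(D)$ equals $\nu_1(\overline g)$, where $\overline g\in\kappa_{\nu}^{*}$ is the residue of $g$ — which is precisely a local equation of $c^{*}\overline D=D'_{\nu}$ at $\nu_1$. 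Thus $\mu(D)=(0,\nu_1(D'_{\nu}))$, as claimed, and since a $b$-divisor of $\kappa_{\nu}$ is determined by its values at divisorial valuations this property also pins down $D'_{\nu}$ uniquely. For the last assertion: if $\kappa_x=\kappa_{\nu}$ for $x=c_X(\nu)$ and $D$ is effective, then $\overline{\{x\}}\not\subseteq\mbox{Supp}(D)$ (again because $\nu(D)=0$), so $D_{\nu}:=D|_{\overline{\{x\}}}$ is an effective Cartier divisor on $\overline{c_X(\nu)}$; running the computation above on $X$ itself, with $g$ restricting to a local equation of $D_{\nu}$, gives $\nu_1(\overline{D_{\nu}})=\nu_1(\overline g)=\nu_1(D'_{\nu})$ for every divisorial $\nu_1$, hence $\overline{D_{\nu}}=D'_{\nu}$.

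The main obstacle I expect is the second paragraph: the careful verification that $\nu_1\mapsto\nu\circ\nu_1$ is a closed immersion of locally ringed spaces onto $\overline{\{\nu\}}$ with $\mathcal{O}_{R(\kappa_{\nu}/\mathbb{C})}$ as structure sheaf, so that pulling back the Cartier divisor $\overline D$ is legitimate and produces a $b$-divisor satisfying the finiteness condition on every model of $\kappa_{\nu}$. Everything else — the exact-sequence bookkeeping and the reduction to effective $D$ for the second statement — is routine once this is in place. If one prefers to avoid Zariski--Riemann language, the same proof can be run entirely on a single sufficiently high model $X'$ of $K(X)$ on which $\nu$ has a center with residue field $\kappa_{\nu}$ and with normal closure, defining $D'_{\nu}$ as the $b$-divisor associated to the restriction of $D$ to that closure and checking directly that this is independent of $X'$ (two such models being dominated by a common one, on which $D$ restricts compatibly).
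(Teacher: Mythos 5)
Your proposal is correct and follows essentially the same route as the paper: both reduce to the observation that a local equation $f$ of $D$ near the center of $\nu$ is a unit in $A_{\nu}$ (since $\nu(D)=0$), define $D'_{\nu}$ locally by the residue $\overline{f}\in\kappa_{\nu}^{*}$, and read off $\mu(D)=(0,\nu_{1}(\overline{f}))$ from the identification $\Gamma_{\nu_{1}}\cong\kappa_{\nu}^{*}/A_{\nu_{1}}^{*}\cong A_{\nu}^{*}/A_{\mu}^{*}\subset\Gamma_{\mu}$. The only packaging difference is that the paper first defines the valuation function $\nu_{1}\mapsto\mu(D)$ pointwise and then verifies local principality on charts $R\Spec\overline{A}$ obtained by taking residues of generators, thereby sidestepping the closed-immersion claim you flag as your main obstacle.
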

\begin{proof} We first construct a valuation function $$\widetilde{D}: R^{1,1}_{Ab}(k_{\nu}/\mathbb C)\longrightarrow \mathbb Z$$
 that satisfies the requirements of the theorem and then show that it is locally on $R(k_{\nu}/\mathbb C)$\, given by the $b$-Cartier closure of a single function in $k_{\nu}$\,.\\
  Let $\nu_0\in R^{1,1}_{Ab}(k_{\nu}/\mathbb C)$\, be given. We form the composed valuation $\mu:=\nu\circ \nu_0$\,. We have an exact sequence of ordered abelian groups
   $$0\longrightarrow \mathbb Z\longrightarrow \Gamma_{\mu}\longrightarrow \Gamma_{\nu}\longrightarrow 0.$$ Since $\nu(D)=0$\,, we may consider $\mu(D)$\, by the above standard exact sequence as an element of $\mathbb Z$\,  and put $$\widetilde{D}(\nu_0):=\mu(D)\in \mathbb Z.$$
    This defines the valuation function $\widetilde{D}$\,. Now let $\nu_0\in R^{1,1}_{Ab}(k_{\nu}/\mathbb C)$\, be given. Let $\mbox{RSpec}(A)$\, be a Zariski open neighbourhood of $\mu_0:=\nu\circ \nu_0$\, in $R(K(X)/\mathbb C)$\, such that the $b$-Cartier divisor $\overline{D}$\, is given on $\mbox{RSpec}(A)$\, by the divisor of a single function $\overline{(f)}$\,.\\
    If $A=k[a_1,...,a_l]\subset K(X)$\, we even have $a_i\subset A_{\mu_0}\subset A_{\nu}$\,.\\ We put $$\overline{A}:=k[\overline{a_1},...\overline{a_l}]\subset k_{\nu}$$
     where $\overline{a_i}$\, are the residues of $a_i$\, in $k_{\nu}$\,.\\
      Since $\nu(D)=0$\, we have $f\in A_{\nu}$\,. Let $\overline{f}\in k_{\nu}$\, be the residue. We claim that $\widetilde{D}=\overline{(\overline{f})}$\, on $\mbox{RSpec}\overline{A}$\,.\\
Indeed, let $\nu_1\in \mbox{RSpec}(\overline{A})$\, be given. Then $\mu_1:=\nu\circ \nu_1\in \mbox{RSpec}(A)$\, and $\mu_1(D)=\mu_1(f)$\,. We have again an exact sequence of value groups 
$$0\longrightarrow \mathbb Z\longrightarrow \Gamma_{\mu_1}\longrightarrow \Gamma_{\nu}\longrightarrow 0.$$ The first inclusion is the canonical inclusion $\Gamma_{\nu_1}\hookrightarrow \Gamma_{\mu_1}$\, via $$k_{\nu}^*/A_{\nu_1}^*\cong A_{\nu}^*/A_{\mu_1}^*\subset K(X)^*/A_{\mu_1}^*,$$
where the first inclusion is the inverse of taking residues. So the class of $f$ in $\Gamma_{\mu_1}$\, goes to the class of $\overline{f}$\, in $\Gamma_{\nu_1}\cong \mathbb Z$\,.\\
To prove the last statement, let $\mathcal O_{X,x}$\, be the local ring of the center $c_X(\nu)$\, of $\nu$\, on $X$. Let $g=0$\, be a local equation for $D$ at $x$. Since $D$ is supposed to be effective, $g\in \mathcal O_{X,x}$\, and we cannot have $g\in \mathfrak{m}_{X,x}$, the maximal ideal, since then also $g\in \mathfrak{m}_{\nu}$\, which contradicts $\nu(g)=0$\,. Thus we may restrict $D$ to $\overline{\{x\}}$\, and put $D_{\nu}:=D\mid_{\overline{\{x\}}}$\,. We have to show that the $b$-Cartier closure $\overline{D_{\nu}}$\, equals $D'_{\nu}$\,. Let $y\in \overline{\{x\}}$\, be a scheme point. Choose an open affine neighbourhood $\Spec A$\, of $y$ such that $D$ is defined on $\Spec A$ by a single equation $h=0$\,. Then $\mathfrak{m}_{X,x}\cap A$\, is the ideal defining $\overline{\{x\}}\cap \Spec A$\, and $D_{\nu}$\, is defined by the residue class of $h$ modulo $\mathfrak{m}_{X,x}\cap A$\,. Let $\nu_1$\, be a discrete divisorial valuation of $k_{\nu}$\, centered above $y$. If we put $\mu_1:=\nu\circ\nu_1,$\, then $h=0$\, is also an equation for $D$ at $\mu\in R(K(X)/\mathbb C).$\\
 We have seen that the residue class of $h$ modulo $\mathfrak{m}_{\nu}$\, is the value $\nu_1(\overline{h})=\nu_1(\widetilde{D})$\,. But $\mathfrak{m}_{X,x}\cap A\subset \mathfrak{m}_{\nu}$\, so also $\nu_1(D_{\nu})=\nu_1(\overline{h})$\, what was to be shown.
\end{proof}
Let again $(X,\Delta)$\, be an lc log pair and $Z\subset X$\, be an lc center. Choose as above an Abhyankar lc place $\nu\in R^k_{Ab}(K/\mathbb C)$\, with $\overline{c_X(\nu)}= Z$\,. Assume that we can manage that $\kappa(\nu)=\kappa(Z)$\,. By \prettyref{thm:T49}, we have constructed a generalized Poincar$\acute{e}$ residue map
\begin{gather*}Res_{\nu}:\,\,\Lambda^n\Omega^1(K/\mathbb C)_{\nu=0}\longrightarrow \Lambda^{n-k}\Omega^1(\kappa(\nu)/\mathbb C)\\
 \omega\mapsto \overline{\omega},
 \end{gather*}
  where the first set denotes the set of all top rational differential forms $\omega$\, with $\nu(\omega)=0$\,. Now, fix $\omega$\, with $\nu(\omega)=0$\,. The rational top differential form $\overline{\omega}$\, defines a canonical b-divisor $\mathcal K^{\overline{\omega}}$\, of the function field $\kappa(\nu)/\mathbb C$\,.  Now, as $a(X,\Delta,\nu)=\nu(\omega)=0$\, it follows $\nu(K_X^{\omega}+\Delta)=0$\, and we may use the restriction of $\mathbb Q-$, resp., $\mathbb R-$ Cartier divisors to define $(K_X^{\omega}+\Delta)\mid_Z$\, as a $\mathbb Q-(\mathbb R)-$ Cartier divisor on $Z$ (no component of the divisor $K_X^{\omega}+\Delta$\, may contain in its support the generic point of $Z$). We define 
  $$\Delta_Z:= (K_X^{\omega}+\Delta)\mid_Z-\mathcal K^{\overline{\omega}}_Z.$$
 If $Y\in \mbox{Mod}(\kappa(Z)/\mathbb C)$\, is any complete birational model above $Z$ with morphism $p:Y\longrightarrow Z$\,, then we define 
 $$\Delta_Y:=(p^*(K_X^{\omega}+\Delta))\mid_Y-\mathcal K^{\overline{\omega}}_Y.$$ Since $\mathcal K^{\overline{\omega}}$\, is a $b$-divisor, we have $p_*\Delta_Y=\Delta_Z$\,. This way, we have defined a $b$-divisor $\Delta^{\kappa(Z)}$\, of the function field $\kappa(Z)/\mathbb C,$\, that we call the adjunction $b$-divisor of the pair $(X,\Delta)$\, with respect to $\nu$\, relative to $\kappa(Z)$\,. 
 \begin{lemma}\mylabel{lem:L14} With notation as above, the adjunction $b$-divisor $\Delta^{\kappa(Z)}$\, does not depend on $\nu\in R_{Ab}(K/\mathbb C)$\, with $\overline{c_X(\nu)}=Z$\, and $(\kappa(\nu)=\kappa(Z))$\, and $a(X,\Delta,\nu)=0$\,.
 \end{lemma}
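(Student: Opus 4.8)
The plan is to strip the definition of $\Delta^{\kappa(Z)}$ down to the single term that can conceivably depend on $\nu$, and to compute that term, divisor by divisor, as a log discrepancy on $X$ itself. First observe that the $b$-divisor $Y\mapsto p^*(K_X^\omega+\Delta)|_Y$ does not involve $\nu$: by \prettyref{lem:L8} (applied to $\mathbb Q$-Cartier approximations, or componentwise, of the $\mathbb R$-Cartier divisor $K_X^\omega+\Delta$) the restriction $(K_X^\omega+\Delta)|_Z$ is the honest restriction of the divisor to the subvariety $Z=\overline{c_X(\nu)}$, which is legitimate precisely because $\nu(K_X^\omega+\Delta)=0$, i.e. no component of $K_X^\omega+\Delta$ passes through $\eta_Z$ (this follows from $a(X,\Delta,\nu)=0$ once one normalises $\nu(\omega)=0$, which is possible since $\nu\colon K(X)^*\to\Gamma_\nu$ is onto). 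Consequently $\Delta^{\kappa(Z)}$ depends on $\nu$ only through the canonical $b$-divisor $\mathcal K^{\overline\omega}$ of $\overline\omega=\mathrm{Res}_\nu(\omega)$, and it suffices to prove that $\mathcal K^{\overline\omega}$, as a $b$-divisor of $\kappa(Z)=\kappa(\nu)$, is independent of $\nu$ (independence from $\omega$ inside the class $\nu(\omega)=0$ is the cancellation already recorded in the Remark after \prettyref{def:D16}).

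Next I would test against an arbitrary prime divisor $E$ of $\kappa(Z)/\mathbb C$, with divisorial valuation $\nu_E^0\in R^{cl}(\kappa(Z)/\mathbb C)$, and form the composed place $\mu:=\nu\circ\nu_E^0$. Since rank and rational rank are additive under composition, $\mu\in R_{Ab}(K(X)/\mathbb C)$ with $\dim(\mu)=\dim(E)$, and since $\nu(\omega)=0$ the value $\mu(\omega)$ lies in the convex subgroup $\Gamma_{\nu_E^0}=\mathbb Z\subset\Gamma_\mu$. Running the computation from the third step of the proof of \prettyref{thm:T11} — choose the representation $(*)$ of $\omega$ adapted to $\nu_E^0$ via \prettyref{lem:L52} — yields $\mathcal K^{\overline\omega}(E)=\mu(\omega)-1$, the ``$-1$'' being the ``log'' shift of \prettyref{lem:L1966}; and \prettyref{lem:L8} yields $\big[p^*(K_X^\omega+\Delta)|_\bullet\big](E)=\mu(K_X^\omega+\Delta)$. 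Subtracting, $\Delta^{\kappa(Z)}(E)=1-a(X,\Delta,\nu\circ\nu_E^0)$. Since the constant $1$ is absolute, the Lemma is now equivalent to the assertion that \emph{for any two admissible places $\nu_1,\nu_2$ (same centre $\eta_Z$, residue field $\kappa(Z)$, log discrepancy $0$) and any prime divisor $E$ of $\kappa(Z)$ one has $a(X,\Delta,\nu_1\circ\nu_E^0)=a(X,\Delta,\nu_2\circ\nu_E^0)$.}

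To attack this last point I would exploit that $(X,\Delta)$ is lc, so $a(X,\Delta,\nu_i)=0$ forces $\nu_i$ to be an lc place in the strong sense: by \prettyref{lem:L11} there is a crepant log resolution on which $\nu_i$ is monomial in exceptional divisors $E_1^{(i)},\dots,E_k^{(i)}$, each of which is itself an lc place. Applying \prettyref{prop:P3} to $\mu_i:=\nu_i\circ\nu_E^0$, its special $\mathbb Z$-basis splits into a ``$\nu_i$-part'', whose divisorial constituents are among the $E_j^{(i)}$ and hence contribute $0$ to $a(X,\Delta,\mu_i)$ regardless of their weights, together with one ``$E$-parameter'' $r$ with $\mu_i(r)=1$; thus $a(X,\Delta,\mu_i)=a(X,\Delta,\nu^E_i)$, the log discrepancy of the divisorial valuation attached to $r$. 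It then remains to see that $\nu^E_i$ depends only on the subvariety $\overline E\subset Z\subset X$ and the crepant model, not on $\nu_i$; I would establish this by induction on $\dim Z$, the case $\dim Z=0$ being vacuous and the case $\operatorname{codim}_X Z=1$ trivial, since then the only admissible place is the divisorial valuation $\nu_Z$ itself (a valuation dominating the DVR $\mathcal O_{X,\eta_Z}$ with trivial residue extension is that DVR). In the inductive step one passes to a model on which $E$ is a genuine prime divisor of the strict transform of $Z$, so that $\mu_i$ is centred at the generic point of $\overline E$ with residue field $\kappa(\overline E)$ of strictly smaller dimension, and identifies $\nu^E_i$ with the blow-up valuation of $\overline E$ by the inductive hypothesis.

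The hard part is exactly this third paragraph. The two delicate points I expect to absorb most of the work are: (a) that \prettyref{prop:P3} really applies to $\mu_i$ with the $E$-parameter singled out — i.e.\ that after the reductions $\mu_i$ is composed with the appropriate discrete divisorial valuation in the sense of \prettyref{def:D4}, which is clear for ``flag''-type $\nu_i$ but needs care when $\nu_i$ is of rank $<k$ (e.g.\ monomial); and (b) the birational bookkeeping guaranteeing that the induction closes with $\nu^E_i$ genuinely $\nu_i$-independent. The rest of the argument is formal, resting on \prettyref{thm:T11}, \prettyref{lem:L8} and \prettyref{def:D16} as used above.
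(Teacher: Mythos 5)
Your first two paragraphs are sound and give a clean reduction: the term $(p^*(K_X^{\omega}+\Delta))\mid_Y$ in the definition of $\Delta^{\kappa(Z)}$ is independent of $\nu$, and the computation $\Delta^{\kappa(Z)}(E)=1-a(X,\Delta,\nu\circ\nu_E^0)$ (which reruns step 3 of the proof of \prettyref{thm:T11} together with \prettyref{lem:L1966}) correctly identifies the whole content of the lemma with the claim that $a(X,\Delta,\nu_1\circ\nu_E^0)=a(X,\Delta,\nu_2\circ\nu_E^0)$ for every prime divisor $E$ of $\kappa(Z)/\mathbb C$. The gap is that your third paragraph does not prove this claim. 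Two concrete problems: (a) \prettyref{prop:P3} applied to $\mu_i=\nu_i\circ\nu_E^0$ produces \emph{some} special $\mathbb Z$-basis $(r_1,\dots,r_{k+1})$ adapted to $(X,\Delta)$, but nothing guarantees that $k$ of its members form a $\mathbb Z$-basis of $\Gamma_{\nu_i}$ whose associated prime divisors are among the $E_j^{(i)}$ of the resolution chosen for $\nu_i$; the basis splitting you invoke has to be constructed, not assumed. (b) Even granting the splitting, the assertion that the remaining divisorial valuation $\nu_i^E$ depends only on $\overline{E}$ is essentially the statement to be proved, and the induction does not close: the inductive hypothesis is the independence of the adjunction $b$-divisor for lower-dimensional centers, which is not the same statement as the identification of $\nu_i^E$; moreover the residue field of $\mu_i$ is $\kappa(E)$, a field of the function field $\kappa(Z)$, which need not coincide with the residue field of $c_{X'}(\mu_i)$ on the model you pass to, so the hypotheses under which the lemma could be applied inductively may fail.

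The paper avoids this circle by a simultaneous-approximation argument: using the approximation theorem (\cite{ZaSam}[Vol.~II, chapter 10, Theorems 18 and 18']) one finds $s_1,\dots,s_k\in K(X)$ that are \emph{simultaneously} a $\mathbb Q$-basis for $\nu_1$ and for $\nu_2$ (treating separately the case where $\nu_1,\nu_2$ are composed with a common valuation), and then a single $f$ with $\nu_1(fs_1\cdots s_k)=\nu_2(fs_1\cdots s_k)=0$. Writing $\omega=f\cdot d^1s_1\wedge\cdots\wedge d^1s_k\wedge d^1x_{k+1}\wedge\cdots\wedge d^1x_n$, the residue form $\overline{fs_1\cdots s_k}\cdot d^1\overline{x_{k+1}}\wedge\cdots\wedge d^1\overline{x_n}$ is then given by one and the same expression for both places, and \prettyref{thm:T11} guarantees that this expression computes $\overline{\omega}$ for each of them; equality of the two adjunction $b$-divisors follows at once. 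If you want to keep your divisor-by-divisor strategy, you will still need this (or an equivalent) device to compare $\nu_1\circ\nu_E^0$ with $\nu_2\circ\nu_E^0$.
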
  
 \begin{proof}  Let us assume that we are given a second Abhyankar place $\nu'$\, with $a(X,\Delta,\nu')=0,$\, $\overline{c_X(\nu')}=Z$\, and $\kappa(\nu')=\kappa(Z)$\,. We fix a transcendence basis $\overline{x_{k+1}},...,\overline{x_n}$\, of $\kappa(\nu)=\kappa(\nu')=\kappa(Z)/\mathbb C$\, and  lifts $x_{k+1},...,x_n$\, to $\mathcal O_{X,Z} \subset A_{\nu},A_{\nu'}\subset K(X)$\,. Our goal is know to find $s_1,...,s_k\in K(X)$\, that are at the same time a $\mathbb Q$-basis for $\Gamma_{\nu}$\, and $\Gamma_{\nu'}$\,. Assume that we have  reduced to the case that $\nu$\, and $\nu'$\, are not composed with a fixed nontrivial valuation. Fix then 
 $$\alpha_1,...,\alpha_k\in \Gamma_{\nu}\,\, \text{and}\,\, \alpha_1',...,\alpha_k'\in \Gamma_{\nu'}$$ that are $\mathbb Q$-basis (in the ordinary sense) respectively. Then find $r_i ,r_i'$\, with $\nu(r_i)=\alpha_i$\, and $\nu'(r_i')=\alpha_i', i=1,...,k$\,. By the Approximation theorem (see \cite{ZaSam}[Vol.II, chapter 10, Theorem 18, p.45, Theorem 18',p.47]), for each $i=1,...,k$\, we find an algebraic function $s_i$\, such that at the same time  $\nu(s_i-r_i)>\alpha_i$\, and $\nu'(s_i-r_i')>\alpha_i'$\,. By the strong triangle inequality, we have 
 $$\nu(s_i)\geq \min\{\nu(s_i-r_i), \nu(r_i)\}=\alpha_i\,\, \mbox{and}\,\, \nu'(s_i)\geq \min\{\nu(s_i-r_i'),\nu'(r_i')\}=\alpha_i'$$ and we have equality since $\nu(s_i-r_i)>\nu(r_i)=\alpha_i$\, and $\nu'(s_i-r_i')>\nu'(r_i')=\alpha_i'$\,. Thus, for $i=1,...,k$\, we have $\nu(s_i)=\alpha_i$\, and $\nu'(s_i)=\alpha_i'$\, and $s_1,...,s_k$\, is at the same time a $\mathbb Q$-basis for $\nu$\, and $\nu'$\,.\\
 Now, assume that $\nu=\mu\circ \xi$\, and $\nu'=\mu\circ \xi'$\, with $\xi,\xi'\in R_{Ab}(\kappa(\mu)/\mathbb C)$\, such that $\xi$\, and $\xi'$\, are not composed with the same valuation. We apply the above considerations to $\xi, \xi'$\, to find a common $\mathbb Q$-basis $\overline{y_1},...,\overline{y_l}$\, in $\kappa(\mu)$\,. We choose lifts $y_1,...y_l$\, to $A_{\mu}\subset K(X)$\,. By the standard arguement, there are exact sequences
 \begin{gather*}0\longrightarrow \Gamma_{\xi}\longrightarrow \Gamma_{\mu\circ \xi}\longrightarrow \Gamma_{\mu}\longrightarrow 0\,\,\text{and}\\
 0\longrightarrow \Gamma_{\xi'}\longrightarrow \Gamma_{\mu\circ\xi'}\longrightarrow \Gamma_{\mu}\longrightarrow 0.
 \end{gather*}
 We can then extend $y_1,...,y_l$\, to a common $\mathbb Q$-basis $s_1,...,s_k$\, for $\mu\circ \xi$\, and $\mu\circ \xi'$\, as was to be shown.\\
 \\
 Turning to our original problem, we have  fixed algebraic functions 
 $$s_1,...,s_k\in K(X)\,\, \mbox{and}\,\, x_{k+1},...x_n\in K(X)$$
  such that the residues $\overline{x_{k+1}},...,\overline{x_n}$\, form a transcendence basis for $\kappa(\nu)=\kappa(\nu')/\mathbb C$\,. Thus we may write each top rational differential form as 
 $$\omega=f\cdot d^1s_1\wedge ...\wedge d^1s_k\wedge d^1x_{k+1}\wedge ...\wedge d^1x_n.$$
 We now determine $f\in K(X)$\, such that 
 $$\nu(f\cdot s_1\cdot ...\cdot s_k)=0\,\, \mbox{and}\,\, \nu'(f\cdot s_1\cdot ....\cdot s_k)=0.$$
  By the approximation theorem, (see \cite{ZaSam}[Vol.II, chapter 10, Theorem 18, p.45, Theorem 18', p. 47]) we find $f\in K(X)$\, such that at the same time 
 $$\nu(f-\frac{1}{s_1\cdot ...\cdot s_k})>-\sum_{i=1}^k\alpha_i\,\,\text{and}\,\,\nu'(f-\frac{1}{s_1\cdot ...\cdot s_k})>-\sum_{i=1}^k\alpha_i'.$$
 Then, from the same arguement as above, we get that 
 $$\nu(f)=\nu(\frac{1}{s_1\cdot ...\cdot s_k})\,\,\text{and}\,\,\nu'(f)=\nu(\frac{1}{s_1\cdot ...\cdot s_k})$$ and thus 
 $$\nu(\omega)=\nu(f\cdot s_1\cdot ...\cdot s_k)=0\,\,\text{and}\,\, \nu'(\omega)=\nu'(f\cdot s_1\cdot ...\cdot s_k)=0.$$
  By \prettyref{thm:T11}, the rational top differential form of $\kappa(Z)/\mathbb C$\,
  $$ \overline{\omega}=\overline{f\cdot s_1\cdot ...\cdot s_k}\cdot d^1\overline{x_{k+1}}\wedge ...\wedge d^1\overline{x_n} $$ does not depend on $\nu$\, or $\nu'$\,.\\
 Now, our situation was, that we are given a log pair $(X,\Delta)$,\, an lc-center $Z\subset X$ and two lc-places $\nu,\nu'\in R_{Ab}(K(X)/\mathbb C)$\, with 
 $$\overline{c_X(\nu)}=\overline{c_X(\nu')}=Z\,\, \mbox{and}\,\, \kappa(\nu)=\kappa(\nu')=\kappa(Z).$$ For simplicity, we assume that $Z\subset X$\, is normal. By assumption, we have $a(X,\Delta,\nu)=a(X,\Delta,\nu')=0$\,. With the above rational top differential form $\omega$\, we have 
 $(K_X^{\omega}+\Delta)\mid_Z=K_Z^{\overline{\omega}}+\Delta_Z$\, and as $K_X^{\overline{\omega}}$\, does not depend on the choosen Abhyankar place, neither does $\Delta_Z$\,.
 \end{proof}
\subsection{Log canonical centers}
We propose the following 
\begin{definition} Let $(X,\Delta)$\, be a normal lc log variety. A generalized log canonical center $C\subset X$\, is an integral subvariety such that there is an Abhyankar place $\nu\in R_{Ab}(K(X)/\mathbb C)$\, with $a(X,\Delta, \nu)=0$\, and $\overline{c_X(\nu)}=C$\,.\\
An integral subvariety $C\subset X$\, is called a generalized minimal log canonical center iff it is a generalized log canonical center and no proper integral subvariety $C'\subsetneq C$\, is a generalized log canonical center of $(X,\Delta)$\,.
\end{definition} 
To demonstrate the method, we prove the following fact that is well known for classical minimal lc centers.
\begin{proposition} Let $(X,\Delta)$\, be a normal lc log variety and $C\subset X$\, a generalized minimal lc center that is assumed to be normal. Assume moreover that we can find $\nu\in R_{Ab}(K(X)/\mathbb C)$\, with $a(X,\Delta,\nu)=0$\, and $\overline{c_X(\nu)}=C$\, and , in addition, $\kappa(\nu)=\kappa(C)$\,.\\
Then,  the log variety $(C,\Delta_C)$\, defined by adjunction is Kawamata log terminal.
\end{proposition}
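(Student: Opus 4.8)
The plan is to show directly that $a(C,\Delta_C,\mu)>0$ for every nontrivial Abhyankar place $\mu\in R_{Ab}(\kappa(C)/\mathbb C)$; this is exactly what it means for $(C,\Delta_C)$ to be klt (and it specializes to the classical statement for divisorial $\mu$). Fix once and for all a rational top differential form $\omega\in\Lambda^n\Omega^1(K(X)/\mathbb C)$ scaled so that $\nu(\omega)=0$. Since by hypothesis $a(X,\Delta,\nu)=\nu(\omega)-\nu(K_X^\omega+\Delta)=0$, we get $\nu(K_X^\omega+\Delta)=0$, so the generalized Poincar\'e residue $\overline\omega=\mathrm{Res}_\nu(\omega)$ is defined (\prettyref{thm:T11}) and $\Delta_C=(K_X^\omega+\Delta)\mid_C-\mathcal K^{\overline\omega}_C$, hence $(K_X^\omega+\Delta)\mid_C=K_C^{\overline\omega}+\Delta_C$, as in the construction preceding \prettyref{lem:L14}.

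The heart of the argument is the identity, for an arbitrary $\mu\in R_{Ab}(\kappa(\nu)/\mathbb C)$,
$$a(X,\Delta,\nu\circ\mu)=a(C,\Delta_C,\mu)\in\Gamma_\mu\otimes_{\mathbb Z}\mathbb R\subseteq\Gamma_{\nu\circ\mu}\otimes_{\mathbb Z}\mathbb R .$$
First, $\nu\circ\mu$ is again an Abhyankar place of $K(X)/\mathbb C$ by additivity of rank and rational rank under composition together with $\dim(\nu\circ\mu)=\dim(\mu)$ (the mixed construction of \prettyref{ex:E62}), and $\kappa(\nu\circ\mu)=\kappa(\mu)$. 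Using the exact sequence $0\to\Gamma_\mu\to\Gamma_{\nu\circ\mu}\xrightarrow{p}\Gamma_\nu\to0$ and $p(\nu\circ\mu(\eta))=\nu(\eta)$, the vanishing $\nu(\omega)=\nu(K_X^\omega+\Delta)=0$ shows $a(X,\Delta,\nu\circ\mu)$ lies in $\Gamma_\mu\otimes\mathbb R$, so the equality is at least meaningful. To prove it, choose (by the exchange-principle juggling of \prettyref{lem:L14}) a $\mathbb Q$-basis $t_1,\dots,t_k$ for $\nu$ and lifts $x_{k+1},\dots,x_n\in A_\nu^*$ of a transcendence basis of $\kappa(\nu)/\mathbb C$ such that $\overline{x_{k+1}},\dots,\overline{x_{k+l}}$ is a $\mathbb Q$-basis for $\mu$ and $\overline{x_{k+l+1}},\dots,\overline{x_n}$ lift a transcendence basis of $\kappa(\mu)/\mathbb C$; write $\omega=f\cdot d^1t_1\wedge\cdots\wedge d^1t_k\wedge d^1x_{k+1}\wedge\cdots\wedge d^1x_n$ with $\nu(ft_1\cdots t_k)=0$. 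Then $t_1,\dots,t_k,x_{k+1},\dots,x_{k+l}$ is a $\mathbb Q$-basis for $\nu\circ\mu$ and $x_{k+l+1},\dots,x_n$ lift a transcendence basis of $\kappa(\nu\circ\mu)$, so \prettyref{thm:T49} gives $\nu\circ\mu(\omega)=\nu\circ\mu(ft_1\cdots t_k)+\sum_{j=1}^l\nu\circ\mu(x_{k+j})$; since $ft_1\cdots t_k$ and each $x_{k+j}$ are units of $A_\nu$, their $\nu\circ\mu$-values equal the $\mu$-values of their residues (the compatibility established in \prettyref{lem:L51}), and comparing with \prettyref{thm:T49} applied to $\overline\omega=\overline{ft_1\cdots t_k}\cdot d^1\overline{x_{k+1}}\wedge\cdots\wedge d^1\overline{x_n}$ on $\kappa(\nu)$ yields $\nu\circ\mu(\omega)=\mu(\overline\omega)$. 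Likewise $\nu\circ\mu(K_X^\omega+\Delta)=\mu\big((K_X^\omega+\Delta)\mid_C\big)=\mu(K_C^{\overline\omega}+\Delta_C)$ by the construction of the restriction and the compatibility of restriction of ($b$-)Cartier divisors with composed valuations (\prettyref{lem:L8}, passing to the associated $b$-divisor to handle non-divisorial $\mu$). Subtracting proves the identity.

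Granting the identity, the proposition follows quickly. Let $\mu\in R_{Ab}(\kappa(C)/\mathbb C)=R_{Ab}(\kappa(\nu)/\mathbb C)$ be nontrivial. Because $(X,\Delta)$ is lc it is globally lc by \prettyref{thm:T10}, so $a(X,\Delta,\nu\circ\mu)\ge0$, whence $a(C,\Delta_C,\mu)\ge0$. If $a(C,\Delta_C,\mu)=0$, then $\nu\circ\mu$ is a generalized lc place of $(X,\Delta)$ whose center $\overline{c_X(\nu\circ\mu)}$ is the closure of the center of $\mu$ on $C$, a \emph{proper} integral subvariety of $C$ (proper because $\mu$ is nontrivial on $\kappa(C)$, so $c_X(\nu\circ\mu)\ne\eta_C$). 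Thus $\overline{c_X(\nu\circ\mu)}$ would be a generalized log canonical center of $(X,\Delta)$ strictly contained in $C$, contradicting the minimality of $C$. Hence $a(C,\Delta_C,\mu)>0$ for every nontrivial $\mu$, i.e. $(C,\Delta_C)$ is klt. (The trivial valuation gives $a=0$ automatically, but its center is all of $C$, so it creates no problem.)

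The step I expect to be the main obstacle is the identity $a(X,\Delta,\nu\circ\mu)=a(C,\Delta_C,\mu)$, and within it the two compatibilities $\nu\circ\mu(\omega)=\mu(\overline\omega)$ and $\nu\circ\mu(K_X^\omega+\Delta)=\mu(K_C^{\overline\omega}+\Delta_C)$. The first is bookkeeping with \prettyref{thm:T49}, \prettyref{thm:T11} and \prettyref{lem:L51}, but one must arrange the $\mathbb Q$-bases and transcendence bases for $\nu$, $\mu$ and $\nu\circ\mu$ simultaneously compatibly, which is exactly the kind of exchange-principle argument carried out in \prettyref{lem:L14}. The second requires upgrading \prettyref{lem:L8}, stated there only for divisorial valuations of $\kappa(\nu)$, to arbitrary Abhyankar $\mu$; this should reduce to the divisorial case through the associated $b$-divisor, but it is the genuinely technical point. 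A minor secondary check is that $\Delta_C$ is effective, so that $(C,\Delta_C)$ really is a log variety in the sense required by the statement.
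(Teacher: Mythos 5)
Your proof is correct and follows essentially the same route as the paper: the key identity $a(X,\Delta,\nu\circ\mu)=a(C,\Delta_C,\mu)$ coming from the valuation formula and the residue construction, combined with global lc-ness of $(X,\Delta)$ from \prettyref{thm:T10} and the minimality of $C$ to rule out a smaller generalized lc center. The only real difference is that the paper argues by contradiction using a single \emph{divisorial} place $\nu_{\overline F}$ with $a(C,\Delta_C,\nu_{\overline F})\le 0$ (which suffices for klt), whereas you prove positivity for all Abhyankar $\mu$, which is why you need the extension of \prettyref{lem:L8} beyond divisorial valuations that the paper's version never has to confront.
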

\begin{proof} Suppose to the contrary that $(C,\Delta_C)$\, is not klt. Then, ther exists a proper birational modification $p:\widetilde{C}\longrightarrow C$\, and a prime divisor $\overline{F}$\, of the function field $\kappa(C)/\mathbb C$\, having divisorial support on $\widetilde{C}$\, such that $a(C,\Delta_C,\nu_{\overline{F}})\leq 0$\,. We form the composed valuation $\nu\circ \nu_{\overline{F}}$\, which is possible since $\kappa(\nu)=\kappa(C)$\,. Recall that in order to determine the log variety $(C,\Delta_C)$\, , we choose a rational top differetial form $\omega$\, with $\nu(\omega)=0$\, and apply the Poincar$\acute{\mbox{e}}$\, residue map to obtain a rational top differential form $\overline{\omega}$\, of $\kappa(C)/\mathbb C$\, and put $K_C^{\overline{\omega}}+\Delta_C= (K^{\omega}+\Delta)\mid_C$\,. By our valuation formula, for $\mu=\nu\circ \nu_{\overline{F}}$\, because of $\nu(\omega)=0$\, we have 
$$\mu(\omega)=\nu_{\overline{F}}(\overline{\omega})\,\, \text{and similarely}\,\, \mu(K_X^{\omega}+\Delta)=\nu_{\overline{F}}((K_X^{\omega}+\Delta)\mid_C).$$
These equalities are of course valid under the canonical inclusion of value groups $\Gamma_{\nu_{\overline{F}}}\hookrightarrow \Gamma_{\mu}$\,.
Thus we get 
$$a(X,\Delta,\mu)=a(C,\Delta_C,\overline{F})\leq 0.$$ But by \ref{thm:T10}, $(X,\Delta)$\, is globally lc and thus $a(X,\Delta,\mu)=0$\,. Now by standard valuation theory $c_C(\nu_{\overline{F}})=c_X(\nu\circ \nu_{\overline{F}})$\, and $c_C(\nu_{\overline{F}})=p(\overline{F})\subsetneq C$\,, where $p$ denoted the proper birational morphism $p: \widetilde{C}\longrightarrow C$\, with $\overline{F}\subset \widetilde{C}$\,. Thus 
$C'=c_X(\mu)\subsetneq C$\, is a generalized lc center of the log variety $(X,\Delta),$\, contradicting the minimality of $C$. 
\end{proof}
 \section{Outlook}
 In this section, we will scetch generalizations and applications of our main theorems. The intention of this paper was of course to introduce methods from general valuation theory to modern birational geometry, in particular the log minimal model program. The theory of lc-places and centers is generalized to arbitrary Abhyankar places. One hope is that introducing generalized lc centers and lc places and using on them adjunction makes the whole theory more flexible, in particular for running inductive arguements.\\ One possible further application is the development of the LMMP for log canonical pairs which requires nowadays the theory of quasi log varieties. If one is given a nonexceptional lc-center $Z\subset (X,D)$\,, the idea is to construct an Abhyankar  place $\nu$\, lying generically finite over $Z$ (meaning that $\kappa(\nu)/\kappa(Z)$\, is a finite algebraic extension), use adjunction to define an lc pair $(Z,D_Z)$\, or at least an lc pair 
 $$(W, D_W),\,\, W\subset (X',D')\,\,, X'\longrightarrow X,\,\, \overline{c_{X'}(\nu)}= W$$ and $W\longrightarrow Z$\, surjective and generically finite. Usually, we need to consider on a log resolution $(X',D')\longrightarrow (X,D)$\,  an SNC-divisor $E=\cup_{i=1}^kE_i,$\, where each $E_i$\, is an lc place of $(X,D)$\, with center in $Z$\, plus a surjective proper morphism $E\longrightarrow Z$\, and use the theory of quasi log varieties (see \cite{Fujino1} und \cite{Fujino2}. This possibly very much simplifies the theory because e.g. it avoids the necessity of proving vanishing theorems etc. on quasi log varieties.\\
 \\
 \\
It is possible to generalize our valuation formula to algebraic function fields over a base field of characteristic $p$\,. Given $\omega\in \Lambda^{max}(\Omega^{(1)}(K(X)/k)$\, and, say $\nu\in R_{Ab}(K(X)/k)$\, of dimension zero, write $\omega=f\cdot d^1x_1\wedge ...\wedge d^1x_n$\, where
\begin{enumerate}[1]
\item $x_1,...,x_n$\, form a $p$-basis for $K(X)/k$\,;
\item the $\mathbb Z$-module  $N:=\mathbb Z\langle \nu(x_1),...,\nu(x_n)\rangle\subset \Gamma_{\nu}$\, has finite index in $\Gamma_{\nu}$\, such that $\sharp\{\Gamma_{\nu}/N\}$\, is a power of $p=\mbox{char}(k).$\,
\end{enumerate}
Then, one proves that, if one defines $\nu(\omega)=\nu(f)+\nu(x_1)+...+\nu(x_n)$\,, the value $\nu(\omega)$\, is independent of the choice of $x_1,...,x_n$\, with the properties $(1)$\, and $(2)$\,. Then, the theory is developed as in characteristic $p=0$\,.  
\bibliography{ValuationLMMP}
\bibliographystyle{plain}
\noindent
\emph{E-Mail-adress:}verb!stef.guenther2@vodafone.de!
\end{document}